\tikzset{snake it/.style={decorate, decoration=snake}}
\newcommand*{\rom}[1]{\expandafter\@slowromancap\romannumeral #1@}
\numberwithin{equation}{section}
\theoremstyle{plain}
\newtheorem{theorem}{Theorem}
\newtheorem{corollary}[theorem]{Corollary}
\numberwithin{theorem}{section}
\theoremstyle{definition}
\theoremstyle{remark}
\newtheorem{remark}[theorem]{Remark}
\theoremstyle{remark}
\newtheorem{discussion}[theorem]{Discussion}
\DeclareMathOperator*{\argmin}{arg\,min}
\DeclareMathOperator{\sinc}{sinc}
\newcommand{\paren}[1]{\left(#1\right)}               
\newcommand{\tred}[1]{{\color{red}#1}}
\newcommand{\tblue}[1]{{\color{blue}#1}}
\newcommand{\vb}{\mathbf{b}}
\newcommand{\vu}{\mathbf{u}}
\newcommand{\vx}{\mathbf{x}}
\newcommand{\vy}{\mathbf{y}}
\newcommand{\be}{\begin{equation}}
\newcommand{\ee}{\end{equation}}
\newcommand{\bea}{\begin{eqnarray}}
\newcommand{\eea}{\end{eqnarray}}
\newcommand{\bean}{\begin{eqnarray*}}
\newcommand{\eean}{\end{eqnarray*}}
\newcommand{\bel}[1]{\begin{equation}\label{#1}}
\newcommand{\eel}[1]{{\label{#1}\end{equation}}}
\newcommand\irregularcircle[2]{
  \pgfextra {\pgfmathsetmacro\len{(#1)+rand*(#2)}}
  +(0:\len pt)
  \foreach \a in {10,20,...,350}{
    \pgfextra {\pgfmathsetmacro\len{(#1)+rand*(#2)}}
    -- +(\a:\len pt)
  } -- cycle
}
\newcommand{\vxi}{{\boldsymbol{\xi}}}
\newcommand{\vom}{{\boldsymbol{\omega}}}
\title[short]{Generalized Abel equations and applications to translation invariant Radon transforms\\{\footnotesize\ddmmyyyydate\today~\currenttime}}
\author{James W. Webber\textsuperscript{$\dagger$}}
\address[James W. Webber (corresponding author) 
]{Department of Oncology and Gynecology, Brigham and Womens Hospital, 221 Longwood Ave. Boston, MA 02115}
\email[A1,A2]{jwebber5@bwh.harvard.edu\textsuperscript{$\dagger$} 
}
\providecommand{\keywords}[1]
{
  \small	
  \textbf{\textit{Keywords---}} #1
}
\begin{document}
\maketitle
\begin{abstract}
Generalized Abel equations have been employed in the recent literature to invert Radon transforms which arise in a number of important imaging applications, including Compton Scatter Tomography (CST), Ultrasound Reflection Tomography (URT), and X-ray CT. In this paper, we present novel injectivity results and inversion methods for Generalized Abel operators. We apply our theory to a new Radon transform, $\mathcal{R}_j$, of interest in URT, which integrates a square integrable function of compact support, $f$, over ellipsoid and hyperboloid surfaces with centers on a plane. Using our newly established theory on generalized Abel equations, we show that $\mathcal{R}_j$ is injective and provide an inversion method based on Neumann series.  In addition, using algebraic methods, we present image phantom reconstructions from $\mathcal{R}_jf$ data with added pseudo random noise.
\end{abstract}
\keywords{{\it{\textbf{Keywords}}}} - Abel equations, Radon transforms, uniqueness of solution


\section{introduction} 
In this paper, we present novel injectivity results and inversion methods for a class of generalized Abel operators, which have important applications in URT, and to Radon transforms which are translation invariant.  Generalized Abel equations are a special class of Volterra equations, and have been applied in the recent literature to invert Radon transforms \cite{p13,p4,p9,p18,p19,p20,p21}. The literature also includes analysis of the stability of generalized Abel operators on Sobolev scale \cite{p30}. 

In \cite{p18}, the authors consider a circular Radon transform, $R$, which has applications in Thermoacoustic Tomography (TAT) and Photo Acoustic Tomography (PAT). Using classical Volterra integral equations theory, the authors prove that $R$ is injective on the domain of continuous functions which are compactly supported on an annulus. 

In \cite{p4}, the authors apply general theory on weakly singular Volterra equations to invert a class of rotation invariant Radon transforms in $n$-dimensions. They authors consider generalized Abel type equations in \cite[equation (3.9)]{p4}, and they provide inversion methods. The integral kernels they consider are smoother than those considered in this paper. The class of kernels we consider are more general than those of \cite{p4}, and our Abel equation theory is a generalization of the results of \cite[pages 515 and 516]{p4}. 

In \cite{p9}, the authors provide inversion methods for a class of generalized Abel equations, and apply them to conic Radon transforms.  They show, in \cite[Theorem 3.4]{p9}, that the generalized Abel equations have unique continuous solution, if certain conditions on the integral kernels are satisfied. This theory is also applied by the authors of \cite{p20} to spindle torus Radon transforms, which are of interest in CST. The uniqueness conditions of \cite[Theorem 3.4]{p9} are different to those presented here, which distinguishes our work from that of \cite{p9}. One key difference, which we discuss in more detail later in section \ref{section_volt} and remark \ref{remark_volt}, is that our integral kernels are zero everywhere on the diagonal, whereas \cite[Theorem 3.4]{p9} requires that the set of zeros on the diagonal be finite.

The literature considers the inversion properties of ellipsoid and hyperboloid Radon transforms \cite{p1,p4,p21,p22,p23,p24,p25,p26,p27,p28,p29}. In \cite{p28}, the authors present support theorems for the spherical Radon transform with centers on a general class of manifolds in $n$-dimensions. This theory applies to generalized functions (distributions). They use microlocal analysis to prove their theorems, and give some example applications, e.g., to sonar. 

In \cite{p26}, the authors present inversion formulae for a spherical means Radon transform, $M$, which integrates smooth functions over spheres with centers on a plane  in $n$-dimensions, when $n$ is even. The authors also provide a full characterization of the range of $M$. Their inversion formulae are of filtered backprojection type, and apply to smooth functions of compact support. All sphere centers on the plane, and all sphere radii are needed to invert $M$ in this way.

In \cite{p25}, the authors present a microlocal analysis of an ellipsoid Radon transform in three dimensions, which has applications in URT. They also provide approximate inversion formulae, and provide in depth microlocal analyses of their properties. A number of numerical experiments and worked examples are conducted to validate their theory, and reconstructions of characteristic functions are presented.


We introduce novel inversion methods for a class of generalized Abel operators. We show, under certain conditions on the operator kernel, that the generalized Abel equations have unique square integrable solution. We apply our theory to ellipsoid and hyperboloid Radon transforms, $\mathcal{R}_j$, which integrate square integrable functions of compact support over ellipsoids and hyperboloids of revolution with centers on a plane in $n$-dimensional space. The stability of $\mathcal{R}_j$ is analyzed in \cite[example 3.4]{p13}, from a microlocal perspective. Inversion formulae are not provided, however. We aim to address this here. We prove that $\mathcal{R}_j$ is injective, and provide an inversion method based on Neumann series. Specifically, after Fourier decomposition, we show that $\mathcal{R}_j$ can be reduced to a set of generalized Abel operators. The operator kernels are then shown to satisfy the specific conditions of our theorems to invert $\mathcal{R}_j$.  We also introduce a novel generalized Radon transform, $\mathcal{R}$, which defines the integrals of a square integrable function over the surfaces of revolution of continuous curves, which are more general than ellipses and hyperbola. In addition to our theory on $\mathcal{R}_j$, we prove injectivity results for $\mathcal{R}$.

The remainder of this paper is organized as follows. In section \ref{prelim}, we state some preliminary results which will be used in our theorems. In section \ref{section_volterra}, we review weakly singular Volterra equations, and present our first main theorem on generalized Abel equations, where we give sufficient conditions on the operator kernel so that there exists a unique, square integrable solution. In section \ref{application}, we apply our theory on generalized Abel equations to $n$-dimensional ellipsoid and hyperboloid Radon transforms, $\mathcal{R}_j$. 
We also give some generalizations of our theory, and provide inversion methods for a novel generalized Radon transform, $\mathcal{R}$, which defines the integrals of $f$ over a more general set of continuous surfaces in $n$-dimensions. In section \ref{results}, we present image reconstructions from $\mathcal{R}_jf$ data using algebraic methods, and discuss the image artifacts. We show that the artifacts can be suppressed effectively using Total Variation (TV) regularization.

\section{Preliminary results}
\label{prelim}
In this section, we state some preliminary results that will be used in our theorems. First, we state a classical result on the solution of Volterra equations of the second kind.

\begin{theorem}[Tricomi \cite{p17}]
\label{tricomi}
Set $j=0$ or $j=1$. Let $f \in L^2([a,b])$, where $-\infty < a < b < \infty$, and let $K \in L^2(T_j)$, where $T_j = \{(p,\omega) \in \mathbb{R}^2 : a\leq p\leq b, a + j(p-a) \leq \omega \leq p + j(b-p)\}$. Let $B_0 = [a,p]$, and let $B_1 = [p,b]$. Then, the Volterra equation of the second kind
\begin{equation}
g(p) = \int_{B_j} K(p,\omega) f(\omega) \mathrm{d}\omega + f(p)
\end{equation}
can be solved uniquely for $f $ if $g$ is known for $p\in[a,b]$.
\end{theorem}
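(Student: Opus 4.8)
The plan is to read the equation as the operator identity $(I+V)f = g$ on $L^2([a,b])$, where $V$ is the Volterra operator $(Vf)(p) = \int_{B_j} K(p,\omega) f(\omega)\,\mathrm{d}\omega$, and to prove that $I+V$ is boundedly invertible; uniqueness and existence of $f = (I+V)^{-1}g$ then follow at once. The first step is to check that $V$ is bounded: applying the Cauchy--Schwarz inequality in the $\omega$ variable and then Fubini gives $\norm{Vf}_{L^2} \le \norm{K}_{L^2(T_j)}\,\norm{f}_{L^2}$, so $V \in \mathcal{B}(L^2([a,b]))$. If we knew $\norm{K}_{L^2(T_j)} < 1$ we could invert $I+V$ immediately through the Neumann series $\sum_{n\ge 0}(-V)^n$, but in general the kernel norm is large and a direct contraction argument in the ambient $L^2$ norm is unavailable. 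This is exactly the crux of the matter.

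To circumvent this, I would introduce, for $j=0$, the family of weighted norms $\norm{f}_\lambda^2 = \int_a^b e^{-2\lambda(p-a)}\abs{f(p)}^2\,\mathrm{d}p$ (and, for $j=1$, the mirror-image weight $e^{-2\lambda(b-p)}$ together with integration over $[p,b]$). Each $\norm{\cdot}_\lambda$ is equivalent to the standard $L^2$ norm for every fixed $\lambda>0$, since the weight is bounded above and below on the compact interval $[a,b]$; in particular $(L^2([a,b]),\norm{\cdot}_\lambda)$ is complete. Distributing the weight $e^{\lambda(\omega-a)}e^{-\lambda(\omega-a)}$ inside the integrand before applying Cauchy--Schwarz yields, for $j=0$,
\begin{equation}
\norm{Vf}_\lambda^2 \le \norm{f}_\lambda^2 \int_a^b \int_a^p e^{-2\lambda(p-\omega)}\abs{K(p,\omega)}^2\,\mathrm{d}\omega\,\mathrm{d}p .
\end{equation}
The key observation is that the double integral tends to $0$ as $\lambda \to \infty$: the integrand is dominated by $\abs{K(p,\omega)}^2$, which is integrable on $T_0$ by hypothesis, while $e^{-2\lambda(p-\omega)} \to 0$ pointwise for almost every $(p,\omega)$ because the diagonal $p=\omega$ has measure zero, so the dominated convergence theorem applies.

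Consequently I can fix $\lambda$ so large that $\norm{Vf}_\lambda \le c\,\norm{f}_\lambda$ with $c<1$, i.e. $V$ is a strict contraction in the equivalent, complete norm $\norm{\cdot}_\lambda$. The Banach fixed-point theorem --- equivalently, norm-convergence of $\sum_{n\ge 0}(-V)^n$ since $\norm{V^n}_\lambda \le c^n \to 0$ --- then shows $I+V$ is invertible, so $g(p) = (Vf)(p)+f(p)$ determines $f$ uniquely in $L^2([a,b])$. The $j=1$ case is identical after replacing the forward weight and the integration over $[a,p]$ by their backward counterparts.

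The main obstacle, and the reason the naive estimate fails, is precisely that $V$ carries no smallness in the plain $L^2$ norm once $\norm{K}_{L^2(T_j)}\ge 1$; the entire content of the theorem is that the Volterra (causal) structure --- integration only over $\omega\le p$ when $j=0$, or $\omega\ge p$ when $j=1$ --- forces the iterated operators to shrink. The weighted-norm device packages this shrinking cleanly, with dominated convergence doing the work that in the classical Tricomi argument is done by showing that the iterated kernels $K_n$ obey factorial decay of the schematic form $\abs{K_n(p,\omega)} \lesssim C^n/\sqrt{(n-1)!}$. An alternative route would be to estimate those iterated kernels directly and sum the resolvent series; the weighted-norm contraction is shorter and, crucially, handles a merely square-integrable kernel with no boundedness assumption and no extra effort.
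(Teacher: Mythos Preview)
Your argument is correct: the Bielecki weighted-norm trick turns the Volterra operator into a strict contraction on an equivalent $L^2$ space, and the dominated-convergence step is valid because $|K|^2\in L^1(T_j)$ and the diagonal has measure zero. One small point worth making explicit is that the Cauchy--Schwarz step in fact uses $\int_a^p e^{-2\lambda(\omega-a)}|f(\omega)|^2\,\mathrm{d}\omega\le\norm{f}_\lambda^2$ (you bound the partial integral by the full one), but this is harmless and you have the estimate right.

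As for comparison: the paper does not supply a proof of this theorem at all---it is stated as a preliminary result and attributed to Tricomi \cite{p17}. The classical route in Tricomi's book is the one you allude to at the end: build the resolvent kernel as the series of iterated kernels $K_n$ and show directly, via repeated Cauchy--Schwarz on the nested integrals, that $\norm{K_n}_{L^2}\lesssim \norm{K}_{L^2}^n/\sqrt{(n-1)!}$, so the Neumann series converges in Hilbert--Schmidt norm regardless of the size of $\norm{K}_{L^2}$. Your weighted-norm approach is a genuine alternative: it trades the explicit iterated-kernel bookkeeping for a single soft limit, and it delivers invertibility without ever writing down the resolvent kernel. Both methods exploit the same structural fact (causality of the integration domain $B_j$), but yours is shorter and more robust for merely $L^2$ kernels, while Tricomi's gives an explicit resolvent representation that can be useful downstream.
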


\noindent We now state the general Leibniz rule.

\begin{theorem}[General Leibniz rule \cite{p15}]
\label{general_leibniz}
Let $f : \mathbb{R} \to \mathbb{R}$ and $g : \mathbb{R} \to \mathbb{R}$ be $k$-times differentiable functions. Then
\begin{equation}
\frac{\mathrm{d}^k}{\mathrm{d}x^k} \paren{ f(x) g(x) } = \sum_{i=0}^k\begin{pmatrix} k\\ i\end{pmatrix} f^{(k-i)}(x) g^{(i)}(x),
\end{equation}
where $g^{(i)}= \frac{\mathrm{d}^i}{\mathrm{d}x^i} g$, and $f^{(i)}= \frac{\mathrm{d}^i}{\mathrm{d}x^i} f$.
\end{theorem}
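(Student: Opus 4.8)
The plan is to prove the identity by induction on the order $k$ of differentiation, with the ordinary product rule supplying both the base case and the engine of the inductive step. For $k=0$ both sides reduce to $f(x)g(x)$, and for $k=1$ the claim is exactly $(fg)' = f'g + fg'$, which agrees with the stated formula since $\binom{1}{0} = \binom{1}{1} = 1$.

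For the inductive step I would assume the formula at level $k$ and apply it to functions that are $(k+1)$-times differentiable, hence in particular $k$-times differentiable, so that the hypothesis applies. Differentiating both sides once more is justified because every term on the right is a product of functions admitting one further derivative, so linearity of differentiation together with the product rule gives
\begin{equation}
\frac{\mathrm{d}^{k+1}}{\mathrm{d}x^{k+1}}\paren{f(x) g(x)} = \sum_{i=0}^k \binom{k}{i}\paren{f^{(k+1-i)}(x)\, g^{(i)}(x) + f^{(k-i)}(x)\, g^{(i+1)}(x)}.
\end{equation}

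The crucial step is to regroup this as a single sum indexed by the order of the derivative falling on $g$. Shifting the summation index by one in the second group, the product $f^{(k+1-i)} g^{(i)}$ picks up the coefficient $\binom{k}{i}$ from the first group and $\binom{k}{i-1}$ from the second, and Pascal's rule $\binom{k}{i} + \binom{k}{i-1} = \binom{k+1}{i}$ collapses these to $\binom{k+1}{i}$, yielding the formula at level $k+1$. The endpoint terms $i=0$ and $i=k+1$ are handled automatically by the conventions $\binom{k}{-1} = \binom{k}{k+1} = 0$.

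I expect the only delicate point to be the bookkeeping in this re-indexing, namely aligning the two sums so that like terms combine and confirming that the boundary coefficients behave correctly, but this is routine once Pascal's identity is invoked. No analytic obstruction arises, since the differentiability hypotheses guarantee that every derivative written down exists.
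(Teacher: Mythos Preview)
Your inductive proof is correct and is the standard argument for the general Leibniz rule. The paper itself does not prove this statement; it merely states it as a preliminary result with a citation to \cite{p15}, so there is no paper proof to compare against. Your argument supplies exactly what the paper omits by reference.
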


\noindent Finally, we have Fa\`{a} di Bruno's formula.

\begin{theorem}[Fa\`{a} di Bruno's formula \cite{p16}]
\label{di_bruno_thm}
Let $f : \mathbb{R} \to \mathbb{R}$ and $g : \mathbb{R} \to \mathbb{R}$ be $k$-times differentiable functions. Then
\begin{equation}
\frac{\mathrm{d}^k}{\mathrm{d}x^k}  f( g(x) )   = \sum_{\gamma \in \Pi} f^{(|\gamma|)}(g(x))\cdot \prod_{\tau \in \gamma} g^{(|\tau|)}(x),
\end{equation}
where $\Pi$ is the set of all partitions of $\{1,\ldots, k\}$, $g^{(i)}= \frac{\mathrm{d}^i}{\mathrm{d}x^i} g$, and $f^{(i)}= \frac{\mathrm{d}^i}{\mathrm{d}x^i} f$.
\end{theorem}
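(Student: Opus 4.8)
The plan is to prove the formula by induction on the order of differentiation $k$. For the base case $k=1$, the set $\Pi$ of partitions of $\{1\}$ consists of the single partition $\gamma = \{\{1\}\}$, for which $|\gamma| = 1$ and the unique block $\tau$ satisfies $|\tau| = 1$; the right-hand side therefore collapses to $f'(g(x)) \cdot g'(x)$, which is exactly the chain rule. This establishes the base case.

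For the inductive step, I would assume the identity holds for some $k \geq 1$ and differentiate both sides once more in $x$. Applying the product rule to each summand $f^{(|\gamma|)}(g(x)) \cdot \prod_{\tau \in \gamma} g^{(|\tau|)}(x)$ produces two kinds of contributions. First, differentiating the outer factor $f^{(|\gamma|)}(g(x))$ yields $f^{(|\gamma|+1)}(g(x)) \cdot g'(x) \cdot \prod_{\tau \in \gamma} g^{(|\tau|)}(x)$; second, differentiating the product $\prod_{\tau \in \gamma} g^{(|\tau|)}(x)$ replaces, one block at a time, a factor $g^{(|\tau|)}$ by $g^{(|\tau|+1)}$ while leaving the outer factor $f^{(|\gamma|)}(g(x))$ untouched.

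The crux of the argument, and the step I expect to require the most care, is the combinatorial bookkeeping showing that these contributions reassemble, each with coefficient one, into the sum over all partitions of $\{1,\ldots,k+1\}$. I would interpret the first kind of term as adjoining the new element $k+1$ to the partition $\gamma$ of $\{1,\ldots,k\}$ as its own singleton block, which raises the block count $|\gamma|$ by one and hence the derivative index on $f$; and the second kind as inserting $k+1$ into an existing block $\tau$, which raises $|\tau|$ by one and hence the order of that $g$-derivative. To finish, I would verify that the map sending a pair (a partition of $\{1,\ldots,k\}$, a choice of placement for $k+1$) to the resulting partition of $\{1,\ldots,k+1\}$ is a bijection: given any partition of $\{1,\ldots,k+1\}$, deleting $k+1$ from its block recovers a unique partition of $\{1,\ldots,k\}$ together with an unambiguous record of whether $k+1$ had been a singleton or had joined a larger block. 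Since every partition of $\{1,\ldots,k+1\}$ is thus produced exactly once, the differentiated right-hand side equals $\sum_{\gamma \in \Pi'} f^{(|\gamma|)}(g(x)) \cdot \prod_{\tau \in \gamma} g^{(|\tau|)}(x)$, where $\Pi'$ is the set of partitions of $\{1,\ldots,k+1\}$, completing the induction.
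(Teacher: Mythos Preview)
Your inductive argument is correct and is the standard proof of the set-partition form of Fa\`{a} di Bruno's formula. Note, however, that the paper does not supply its own proof of this statement: it is listed among the preliminary results and simply cited from \cite{p16}. So there is no ``paper's proof'' to compare against; your proposal fills in what the paper takes for granted, and the induction together with the bijection between partitions of $\{1,\ldots,k+1\}$ and pairs (partition of $\{1,\ldots,k\}$, placement of $k+1$) is exactly the right mechanism.
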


\section{Volterra equations}
\label{section_volterra}
In this section, we review weakly singular Volterra equations and prove our first main result which provides uniqueness of solution conditions for a class of generalized Abel equations.
\subsection{Weakly singular Volterra equations}
In this section, we review some results from the literature on weakly singular Volterra equations. We have the theorem \cite[page 23]{p7}:
\begin{theorem}
\label{weak_volt_thm}
Set $j=0$ or $j=1$. Let $f \in L^2([a,b])$, where $-\infty < a < b < \infty$, let $\beta \in [0,1)$, and let $K \in L^2(T_j)$, where $T_j = \{(p,\omega) \in \mathbb{R}^2 : a\leq p\leq b, a + j(p-a) \leq \omega \leq p + j(b-p)\}$. Further, let 
the following conditions on $K$ be satisfied:
\begin{enumerate}
\item $K$ and $\frac{\mathrm{d}K}{\mathrm{d}p}$ are continuous on $T_j$.
\item $K(p,p)$ is non-zero, for $p\in [a,b]$.
\end{enumerate}
Then, the weakly singular Volterra equation
\begin{equation}
\label{weak_volt}
g(p) = \int_{B_j(p)} \frac{K(p,\omega)}{ \left[(-1)^j\paren{p-\omega} \right]^{\beta}} f(\omega) \mathrm{d}\omega,
\end{equation}
where $B_0(p) = [a,p]$ and $B_1(p) = [p,b]$, can be solved uniquely for $f$ if $g(p)$ is known for all $p \in [a,b]$. 
\begin{proof}
The case $\beta = 0$ follows directly from theory on standard Volterra equations of the first kind. We now consider the case $\beta \in (0,1)$. Let $D_0(r) = [a,r]$ and $D_1(r) = [r,b]$, where $r\in [a,b]$. Then, from \eqref{weak_volt}, we have
\begin{equation}
\label{weak_volt_1}
\begin{split}
\int_{D_j(r)}\frac{g(p)}{\left[(-1)^j(r-p)\right]^{1-\beta}}\mathrm{d}p &= \int_{D_j(r)}\int_{B_j(p)} \frac{K(p,\omega)}{\left[(-1)^j(p-\omega)\right]^{\beta}\left[(-1)^j(r-p)\right]^{1-\beta}} f(\omega) \mathrm{d}\omega \mathrm{d}p\\
& = \int_{D_j(r)} L_{\beta}(r,\omega) f(\omega) \mathrm{d}\omega,
\end{split}
\end{equation}
where 
$$ L_{\beta}(r,\omega)  = (-1)^j\int_{\omega}^r \frac{K(p,\omega)}{\left[(-1)^j(p-\omega)\right]^{\beta}\left[(-1)^j(r-p)\right]^{1-\beta}} \mathrm{d} p.$$
We now substitute $p = \omega +j(r-\omega) + (-1)^j(r-\omega) t$, and 
\begin{equation}
\begin{split}
L_{\beta}(r, \omega) 
& = \int_{0}^1 \frac{K(\omega +j(r-\omega) + (-1)^j(r-\omega) t,\omega)}{t^{\beta}(1-t)^{1-\beta}} \mathrm{d}t.
\end{split}
\end{equation}
With this, we have
$$L_{\beta}(r,r) =\frac{\pi  K(r,r)}{\sin(\pi \beta)} = c_{\beta}(r),$$
where $\left|c_{\beta}(r)\right|\geq \epsilon$, for all $r \in [a,b]$, for some $\epsilon > 0 $, by conditions (1) and (2). That is, $c_{\beta}$ is bounded away from zero on $[a,b]$. Further, we have
\begin{equation}
\begin{split}
\left|\frac{\mathrm{d}L_{\beta}}{\mathrm{d}r}(r,\omega)\right| &= \left|\int_{0}^1 (-1)^j t \cdot \frac{\frac{\mathrm{d}K}{\mathrm{d}p}(\omega +j(r-\omega) + (-1)^j(r-\omega) t,\omega)}{t^{\beta}(1-t)^{1-\beta}} \mathrm{d}t\right|\\
& \leq \int_{0}^1 t \cdot \frac{\left| \frac{\mathrm{d}K}{\mathrm{d}p}(\omega +j(r-\omega) + (-1)^j(r-\omega) t,\omega)\right|}{t^{\beta}(1-t)^{1-\beta}} \mathrm{d}t\\
& \leq \frac{\pi  M}{\sin(\pi \beta)},
\end{split}
\end{equation}
for $(r,w) \in T_j$, where $\left| \frac{\mathrm{d}K}{\mathrm{d}p}\right| \leq M$ on $T_j$, by condition (1), noting that $T_j$ is compact.

Now 
\begin{equation}
\begin{split}
h(r) &= \frac{1}{c_{\beta}(r)}\paren{\frac{\mathrm{d}}{\mathrm{d}r} \int_{D_j(r)}\frac{g(p)}{\left[(-1)^j(r-p)\right]^{1-\beta}}\mathrm{d}p} \\
&= \frac{1}{c_{\beta}(r)} \int_{D_j(r)} \frac{\mathrm{d}L_{\beta}}{\mathrm{d}r}(r,\omega) f(\omega) \mathrm{d}\omega + (-1)^jf(r)
\end{split}
\end{equation}
we can convert \eqref{weak_volt_1} into a Volterra equation of the second kind with bounded kernel and solve uniquely for $f$ by Theorem \ref{tricomi}.
\end{proof}
\end{theorem}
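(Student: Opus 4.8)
The plan is to reduce the weakly singular first-kind equation \eqref{weak_volt} to a second-kind Volterra equation with a bounded, continuous kernel, so that Theorem \ref{tricomi} applies directly. The case $\beta = 0$ is merely a classical Volterra equation of the first kind, so I would dispose of it immediately and concentrate on $\beta \in (0,1)$. The central device is Abel's fractional-integration trick: integrate $g$ against the \emph{complementary} singular weight $[(-1)^j(r-p)]^{-(1-\beta)}$ over $D_j(r)$, the point being that the exponents $\beta$ and $1-\beta$ together compose to an ordinary (integer-order) integration, whose $r$-derivative will recover $f$ up to a bounded correction.

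Concretely, I would multiply \eqref{weak_volt} by $[(-1)^j(r-p)]^{-(1-\beta)}$, integrate $p$ over $D_j(r)$, and apply Fubini to interchange the $p$- and $\omega$-integrals. This produces, on the weighted data, the equation $\int_{D_j(r)} L_\beta(r,\omega) f(\omega)\,\mathrm{d}\omega$ with
\[
L_\beta(r,\omega) = (-1)^j \int_\omega^r \frac{K(p,\omega)}{[(-1)^j(p-\omega)]^{\beta}\,[(-1)^j(r-p)]^{1-\beta}}\,\mathrm{d}p .
\]
The linear substitution rescaling $p$ from the interval between $\omega$ and $r$ onto $[0,1]$ turns the inner integral into a Beta-type integral, and by the reflection formula $\int_0^1 t^{-\beta}(1-t)^{-(1-\beta)}\,\mathrm{d}t = B(1-\beta,\beta) = \pi/\sin(\pi\beta)$, with $K$ evaluated along the rescaled path. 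Evaluating on the diagonal then gives $L_\beta(r,r) = \pi K(r,r)/\sin(\pi\beta)$, which by conditions (1) and (2) is bounded away from zero on $[a,b]$.

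The next step is to establish enough regularity of $L_\beta$. Differentiating under the integral sign in $r$ (legitimate since $K \in C^1$ and the weight $t^{-\beta}(1-t)^{-(1-\beta)}$ is integrable), I would bound $|\partial_r L_\beta|$ by $\pi M/\sin(\pi\beta)$, where $M = \sup_{T_j}|\partial_p K|$ is finite by condition (1) and compactness of $T_j$. Finally, I would differentiate the reduced equation in $r$: the variable-endpoint term contributes $(-1)^j f(r)$ from the diagonal, plus an integral against $\partial_r L_\beta$. Dividing through by $c_\beta(r) = L_\beta(r,r)$ yields a second-kind Volterra equation with continuous, bounded kernel $c_\beta(r)^{-1}\partial_r L_\beta(r,\omega)$ and a known right-hand side, to which Theorem \ref{tricomi} applies to produce a unique $L^2$ solution $f$.

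I expect the main obstacle to be the regularity bookkeeping rather than the algebra: justifying Fubini and the differentiation under the integral sign in the presence of the endpoint singularities at $t = 0,1$, and verifying that $L_\beta$ and $\partial_r L_\beta$ extend continuously up to the diagonal so that the hypotheses of Theorem \ref{tricomi} (a bounded, hence $L^2$, kernel) are genuinely met. The Beta-function evaluation and the diagonal computation are routine once the substitution is in place; the care lies in controlling the composite singular weight uniformly on the compact triangle $T_j$.
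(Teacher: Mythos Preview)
Your proposal is correct and follows essentially the same route as the paper: the complementary Abel weight, the Fubini swap, the linear rescaling to a Beta integral giving $L_\beta(r,r)=\pi K(r,r)/\sin(\pi\beta)$, the bound $|\partial_r L_\beta|\le \pi M/\sin(\pi\beta)$, and the final differentiation-and-division step to reach a second-kind equation for Theorem~\ref{tricomi}. The only differences are cosmetic; even your notation and constants match the paper's.
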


\subsection{Generalized Abel equations}
\label{section_volt}
In this section, we analyze generalized Abel equations, and prove our first main theorem, which gives sufficient conditions on the integral kernels for uniqueness of solution. 

Set $j=0$ or $j=1$. Then, a \emph{generalized Abel} equation is of the form
\begin{equation}
\label{anti_volt}
g(p) = \int_{B_j(p)} \left[(-1)^j(p-\omega)\right]^{\alpha} K(p,\omega) f(\omega) \mathrm{d}\omega,
\end{equation}
where $K \in L^2(T_j)$, $T_j$
and $B_j(p)$ are defined as in Theorem \ref{weak_volt_thm}, $p \in [a,b]$, $\alpha = m-\beta > -1$ with $m\geq 0$, an integer, and $\beta \in [0,1)$. In \cite[page 43]{p7}, the authors consider the case $K \equiv 1$. In \cite{p8}, the $\alpha = 1/2$ and $K \equiv 1$ case is covered. Weakly singular Volterra equations are a special case of generalized Abel equations, when $m=0$.

We now state and prove our first main theorem. 
\begin{theorem}
\label{anti_volt_thm}
Set $j=0$ or $j=1$. Let $f \in L^2([a,b])$, let $T_j$
be defined as in Theorem \ref{weak_volt_thm}, and let the following conditions on $K$ be satisfied:
\begin{enumerate}
\item $K$ and the functions
$$G_i(p,\omega) = \left[(-1)^j(p-\omega)\right]^{i-1}K^{(i)}(p,\omega), \  \text{for}\  1\leq i \leq m+1,$$
are continuous on $T_j$, where $K^{(i)} = \frac{\mathrm{d}^{i} K}{\mathrm{d}p^{i}}$.
\item $K$ is non-zero on the diagonal, i.e., $K(p,p) \neq 0$ for $p\in [a,b]$.
\end{enumerate}
Then, \eqref{anti_volt} can be solved uniquely for $f$, if $g(p)$ is known for $p\in [a,b]$.
\end{theorem}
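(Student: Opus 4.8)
The plan is to reduce the generalized Abel equation \eqref{anti_volt} to a weakly singular Volterra equation of the type already handled by Theorem \ref{weak_volt_thm}. The decisive structural feature is that $\alpha = m - \beta$ with $m$ a nonnegative integer and $\beta \in [0,1)$, so differentiating \eqref{anti_volt} exactly $m$ times in $p$ should lower the exponent of $[(-1)^j(p-\omega)]$ from $m-\beta$ down to $-\beta \in (-1,0]$, converting the problem into the $m=0$ case. The case $m=0$ is literally Theorem \ref{weak_volt_thm}, so I would treat it as the base and assume $m \geq 1$ below.

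First I would differentiate \eqref{anti_volt} with respect to $p$ via the Leibniz integral rule. The boundary contribution at $\omega = p$ carries the factor $[(-1)^j(p-\omega)]^{m-\beta}$, which vanishes there since $m-\beta>0$; differentiation under the integral is legitimate because $f \in L^2([a,b])$ and the differentiated kernel is square integrable in $\omega$ for each fixed $p$. Iterating, at the $k$-th stage ($1 \leq k \leq m$) the smallest power of $[(-1)^j(p-\omega)]$ present is $m-\beta-(k-1)$, which stays strictly positive up to $k=m$ \emph{precisely because} $\beta<1$; hence every boundary term vanishes and I arrive at
\begin{equation}
g^{(m)}(p) = \int_{B_j(p)} \frac{\partial^m}{\partial p^m}\paren{[(-1)^j(p-\omega)]^{m-\beta} K(p,\omega)}\, f(\omega)\,\mathrm{d}\omega.
\end{equation}
Writing $c_i = [(-1)^j]^{m-i}\prod_{l=i+1}^{m}(l-\beta)$ for the nonzero constant produced by differentiating $[(-1)^j(p-\omega)]^{m-\beta}$ exactly $m-i$ times, the general Leibniz rule (Theorem \ref{general_leibniz}) gives
\begin{equation}
\frac{\partial^m}{\partial p^m}\paren{[(-1)^j(p-\omega)]^{m-\beta} K(p,\omega)} = [(-1)^j(p-\omega)]^{-\beta}\sum_{i=0}^{m}\binom{m}{i} c_i\, [(-1)^j(p-\omega)]^{i} K^{(i)}(p,\omega).
\end{equation}
Defining $\tilde K(p,\omega) = \sum_{i=0}^{m}\binom{m}{i} c_i [(-1)^j(p-\omega)]^{i} K^{(i)}(p,\omega)$ then recasts the identity as the weakly singular equation $g^{(m)}(p) = \int_{B_j(p)} [(-1)^j(p-\omega)]^{-\beta}\tilde K(p,\omega) f(\omega)\,\mathrm{d}\omega$.

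It then remains to check that $\tilde K$ satisfies the two hypotheses of Theorem \ref{weak_volt_thm}. For continuity of $\tilde K$, I would rewrite each summand with $i\geq 1$ as $[(-1)^j(p-\omega)]\cdot G_i(p,\omega)$ and keep the $i=0$ term as $c_0 K(p,\omega)$, all continuous by condition (1). Differentiating once more, each piece of $\mathrm{d}\tilde K/\mathrm{d}p$ collapses to a constant multiple of some $G_i$ with $1 \leq i \leq m+1$ (the ``lowering'' term producing $i\,G_i$ and the ``raising'' term producing exactly $G_{i+1}$), again continuous by condition (1); this is precisely why the hypotheses are stated through the $G_i$ and not through the bare derivatives $K^{(i)}$, which may blow up on the diagonal. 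Finally, on the diagonal only the $i=0$ summand survives, so $\tilde K(p,p) = c_0 K(p,p)$ with $|c_0| = \prod_{l=1}^{m}(l-\beta) > 0$, whence $\tilde K(p,p)\neq 0$ by condition (2). With both conditions verified, Theorem \ref{weak_volt_thm} yields that $f$ is uniquely determined by $g^{(m)}$, and therefore by $g$, which proves the claim.

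The step I expect to be the main obstacle is the analytic justification of the repeated differentiation under the integral sign together with the vanishing of the boundary terms when $f$ is only $L^2$ rather than continuous: one must control the near-diagonal singularities of the successive kernels $[(-1)^j(p-\omega)]^{i-\beta}K^{(i)}$ and confirm that each intermediate integral is genuinely differentiable in $p$ with the stated derivative. The continuity of the $G_i$ is exactly the hypothesis that tames these singularities, and it is what makes the scheme work for a more general class of kernels than in \cite{p4}. By contrast, the combinatorial bookkeeping of the constants $c_i$ and the binomial coefficients is routine.
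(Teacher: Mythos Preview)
Your proposal is correct and follows essentially the same route as the paper: differentiate \eqref{anti_volt} $m$ times in $p$ to reduce to a weakly singular Volterra equation, use the general Leibniz rule to identify the resulting kernel $\tilde K$ (the paper's $L_\alpha$), and verify the two hypotheses of Theorem \ref{weak_volt_thm} via condition (1) on the $G_i$ and condition (2) on the diagonal. Your constants $c_i$ agree with the paper's $c_{i,m}$, and your observation that $\mathrm{d}\tilde K/\mathrm{d}p$ decomposes into linear combinations of the $G_i$ is exactly the paper's verification; the only difference is that you flag the analytic justification of differentiation under the integral more explicitly than the paper does.
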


\begin{remark}
\label{remark_volt}
The proof uses similar ideas to \cite[page 10]{p7}. The basic idea is to differentiate both sides of \eqref{anti_volt} $m$ times to convert \eqref{anti_volt} into a weakly singular Volterra equation as in \eqref{weak_volt}. Then, Theorem \ref{weak_volt_thm} can be applied to obtain a solution. Note, the general Volterra theory of \cite{p9} cannot be applied to \eqref{anti_volt}, since the kernel vanishes everywhere on the diagonal. In \cite[Theorem 3.1]{p4}, the authors discuss an inversion method for \eqref{anti_volt}, for $K \in C^{\infty}([a,b]^2)$, and when $\alpha = q/2$, with $q \geq -1$ an integer. Our theory is more general, and does not require $K$ to be smooth on $[a,b]^2$, and we consider all real $\alpha > -1$.
\end{remark}

\begin{proof}
The $m=0$ case follows from Theorem \ref{weak_volt_thm}, so we now focus on the $m\geq 1$ case.
Let us define $H_{\alpha} \in C(T_j)$ by
\begin{equation}
H_{\alpha}(p, \omega) = \left[(-1)^j(p-\omega)\right]^{\alpha}K(p,w).
\end{equation}
Then, by Theorem \ref{general_leibniz}, for $1\leq k \leq m$, we have
\begin{equation}
\label{H_a}
\begin{split}
H_{\alpha}^{(k)}(p, \omega) &= \frac{\mathrm{d}^k H_{\alpha}}{\mathrm{d}p^k}(p, \omega) \\
&= \sum_{i=0}^k c_{i,k} \left[(-1)^j(p-\omega)\right]^{\alpha - (k-i)} K^{(i)}(p, \omega)\\
&= \frac{1}{\left[(-1)^j(p-\omega)\right]^{\beta}}\sum_{i=0}^k c_{i,k} \left[(-1)^j(p-\omega)\right]^{m - (k-i)} K^{(i)}(p, \omega)\\
&= \frac{\left[(-1)^j(p-\omega)\right]^{m - k}}{\left[(-1)^j(p-\omega)\right]^{\beta}}\left[c_{0,k} K(p,\omega) + \left[(-1)^j(p-\omega)\right]\sum_{i=1}^k c_{i,k} G_i(p, \omega)\right],
\end{split}
\end{equation}
where $c_{k,k} = 1$, and
\begin{equation}
c_{i,k} = (-1)^{j (k-i)}\begin{pmatrix} k \\ i \end{pmatrix} \prod_{q=0}^{k-1-i}(\alpha - q)
\end{equation}
for $0\leq i \leq k - 1$. Then, $H_{\alpha}^{(k)} \in C(T_j)$, for all $0\leq k\leq m-1$, by condition (1).

We have $H_{\alpha}(p,p) = 0$, for all $p \in [a,b]$, since $K \in C(T_j)$, by condition (1). Further, for $1\leq k \leq m - 1$, $H_{\alpha}^{(k)}(p,p) = 0$ on $[a,b]$ by \eqref{H_a}, as $K, G_i \in C(T_j)$, by condition (1). It follows that
\begin{equation}
\begin{split}
\label{anti_volt_1}
\frac{\mathrm{d}^mg}{\mathrm{d}p^m}(p) & = \int_{B_j} H_{\alpha}^{(m)}(p,\omega) f(\omega) \mathrm{d}\omega \\
& = \int_{B_j} \frac{1}{\left[(-1)^j(p-\omega)\right]^{\beta}}\left[\sum_{i=0}^m c_{i,m} \left[(-1)^j(p-\omega)\right]^{i} K^{(i)}(p, \omega) \right]  f(\omega) \mathrm{d}\omega \\
& = \int_{B_j} \frac{L_{\alpha}(p, \omega)}{\left[(-1)^j(p-\omega)\right]^{\beta}} f(\omega) \mathrm{d}\omega.
\end{split}
\end{equation}
It now remains to show that $L_{\alpha}$ satisfies the conditions of Theorem \ref{weak_volt_thm} to prove the result. 

We have, $L_{\alpha}(p,p) = c_{0,m} K(p,p) \neq 0$, for $p \in [a,b]$, by condition (2). Further,
$$L_{\alpha}(p, \omega) = c_{0,m} K(p,\omega) + \left[(-1)^j(p-\omega)\right]\sum_{i=1}^m c_{i,m} G_i(p, \omega) \in C(T_j),$$
and
\begin{equation}
\begin{split}
\frac{\mathrm{d}L_{\alpha}}{\mathrm{d}p}(p, \omega) & = c_{0,m} K^{(1)}(p,\omega) \\
&\hspace{-1cm}+ \sum_{i=1}^m c_{i,m} \left[(-1)^j i \cdot \left[(-1)^j(p-\omega)\right]^{i-1}K^{(i)}(p, \omega) + \left[(-1)^j(p-\omega)\right]^{i}K^{(i+1)}(p, \omega)\right] \\
& = c_{0,m} G_1(p,\omega) + \sum_{i=1}^m c_{i,m} \left[ (-1)^j i  \cdot G_i(p, \omega) + G_{i+1}(p, \omega)\right] \in C(T_j),
\end{split}
\end{equation}
by condition (1). 
Thus, the conditions of Theorem \ref{weak_volt_thm} are satisfied and this completes the proof.
\end{proof}

\noindent We now have some corollaries.

\begin{corollary}
\label{corr1}
Set $j=0$ or $j=1$. Let $\alpha = m-\beta > -1$ with $m\geq 0$, an integer, and $\beta \in [0,1)$, and let $T_j$ be defined as in Theorem \ref{anti_volt_thm}. Let $K^{(i)} \in C(T_j)$, for all $0\leq i \leq m+1$, and let $K(p,p) \neq 0$ for $p \in [a,b]$. Then, \eqref{anti_volt} has a unique solution $f \in L^2([a,b])$, if $g(p)$ is known for all $p\in [a,b]$.
\begin{proof}
The result follows immediately from Theorem \ref{anti_volt_thm}. 
\end{proof}
\end{corollary}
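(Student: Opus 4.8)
The plan is to recognize that the hypotheses of this corollary are simply a convenient strengthening of those in Theorem \ref{anti_volt_thm}, so that the entire statement reduces to verifying that the assumptions made here imply conditions (1) and (2) of that theorem. Once this implication is established, the conclusion follows at once, with no further analysis of the Volterra or Abel machinery needed.

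First I would compare the two hypothesis lists. Condition (2) is identical in both places: we assume $K(p,p) \neq 0$ for $p \in [a,b]$. It therefore suffices to check that the assumption $K^{(i)} \in C(T_j)$ for all $0 \leq i \leq m+1$ implies condition (1) of Theorem \ref{anti_volt_thm}, namely that $K$ together with each
$$G_i(p,\omega) = \left[(-1)^j(p-\omega)\right]^{i-1} K^{(i)}(p,\omega), \quad 1 \leq i \leq m+1,$$
is continuous on $T_j$.

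To see this, I would note that for each $i$ with $1 \leq i \leq m+1$ the exponent $i-1$ is a non-negative integer, so $\left[(-1)^j(p-\omega)\right]^{i-1}$ is a polynomial in $p-\omega$ and hence continuous on $T_j$. Since $K^{(i)}$ is continuous by hypothesis, the product $G_i$ is a product of two continuous functions and is therefore continuous; the case $i=1$ is immediate because $G_1 = K^{(1)}$, while continuity of $K = K^{(0)}$ is assumed directly. Thus condition (1) holds in full.

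With both conditions of Theorem \ref{anti_volt_thm} verified, I would invoke that theorem to conclude that \eqref{anti_volt} has a unique solution $f \in L^2([a,b])$ whenever $g$ is known on $[a,b]$. There is no genuine obstacle in this argument; the only point requiring a moment's care is that the factor $\left[(-1)^j(p-\omega)\right]^{i-1}$ is continuous rather than singular, which holds precisely because $i-1 \geq 0$ throughout the relevant range, so that no negative power of $p-\omega$ ever appears in the $G_i$.
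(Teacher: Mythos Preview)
Your proposal is correct and follows exactly the approach the paper takes: the paper's proof is the single sentence ``The result follows immediately from Theorem \ref{anti_volt_thm},'' and you have simply spelled out the implicit verification that $K^{(i)} \in C(T_j)$ for $0 \le i \le m+1$ forces each $G_i = [(-1)^j(p-\omega)]^{i-1} K^{(i)}$ to be continuous (since the exponent $i-1 \ge 0$ makes the first factor a polynomial). There is nothing to add or correct.
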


\begin{corollary}
\label{corr2}
Set $j=0$ or $j=1$. Let $\alpha = m-\beta > -1$ with $m\geq 0$, an integer, and $\beta \in [0,1)$, and let $T_j$
be defined as in Theorem \ref{anti_volt_thm}. Let $K(p,p) \neq 0$ for $p \in [a,b]$, and let $K$ have the form
\begin{equation}
K = \prod_{i=1}^l K_i,
\end{equation}
for some integer $ l\geq 2$, where $K_2^{(i)},\ldots,K_l^{(i)}\in C(T_j)$ 
, for all $0\leq i\leq m+1$. 
Let $K_1$ and the functions
$$G_{1,q}(p,\omega) = \left[(-1)^j(p-\omega)\right]^{q-1}K_1^{(q)}(p,\omega)$$
be continuous on $T_j$, for $1\leq q \leq m+1$. Then, \eqref{anti_volt} has a unique solution $f \in L^2([a,b])$.
\begin{proof}
First, it is clear that $K \in C(T_j)$, by assumption. Let $J = \prod_{i=2}^l K_i$. Then $J^{(i)} \in C(T_j)$ 
, for $0\leq i \leq m+1$, and, by Theorem \ref{general_leibniz}
\begin{equation}
K^{(i)}(p,\omega) = \sum_{q=0}^{i} \begin{pmatrix} i \\ q \end{pmatrix}K_1^{(q)}(p,\omega)J^{(i-q)}(p, \omega).
\end{equation}
Therefore
\begin{equation}
\begin{split}
G_i(p, \omega) & = \left[(-1)^j(p-\omega)\right]^{i-1}K^{(i)}(p,\omega) \\
& = K_1(p,\omega)J^{(i)}(p, \omega) + \sum_{q=1}^{i} \begin{pmatrix} i \\ q \end{pmatrix}\left[(-1)^q(p-\omega)\right]^{i-q}J^{(i-j)}(p, \omega) G_{1,q}(p,\omega),
\end{split}
\end{equation}
which is continuous on $T_j$, for $1\leq i \leq m+1$, by assumption on the $G_{1,q}$, and since $J^{(i)} \in C(T_j)$, for all $0\leq i \leq m+1$. Thus, $K$ satisfies the conditions of Theorem \ref{anti_volt_thm}. This finishes the proof.
\end{proof}
\end{corollary}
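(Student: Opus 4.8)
The plan is to reduce the claim to Theorem~\ref{anti_volt_thm} by showing that the product kernel $K=\prod_{i=1}^{l}K_i$ satisfies both hypotheses of that theorem; uniqueness of the $L^2([a,b])$ solution of \eqref{anti_volt} then follows at once. Hypothesis~(2), namely $K(p,p)\neq 0$ on $[a,b]$, is assumed outright, so the entire task is to verify hypothesis~(1): that $K$ and the weighted derivatives $G_i(p,\omega)=[(-1)^j(p-\omega)]^{i-1}K^{(i)}(p,\omega)$ are continuous on $T_j$ for $1\le i\le m+1$. Continuity of $K$ itself is immediate, since each factor $K_s$ is continuous (for $s\ge 2$ this is the $i=0$ case of the stated hypothesis, and $K_1$ is continuous by assumption) and a finite product of continuous functions is continuous.

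First I would isolate the regular part of the product. Set $J=\prod_{s=2}^{l}K_s$ and observe that $J^{(i)}\in C(T_j)$ for every $0\le i\le m+1$: by iterating the general Leibniz rule (Theorem~\ref{general_leibniz}), $J^{(i)}$ is a finite sum of products $\prod_{s=2}^{l}K_s^{(i_s)}$ with $\sum_s i_s=i\le m+1$, so every factor $K_s^{(i_s)}$ has order $i_s\le m+1$ and is therefore continuous by hypothesis. Writing $K=K_1 J$ and applying the two-factor Leibniz rule gives
\begin{equation}
K^{(i)}(p,\omega)=\sum_{q=0}^{i}\binom{i}{q}K_1^{(q)}(p,\omega)\,J^{(i-q)}(p,\omega),
\end{equation}
which reduces the problem to controlling the single irregular factor $K_1$.

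The crux is then the weight bookkeeping. Multiplying by $[(-1)^j(p-\omega)]^{i-1}$ and separating the $q=0$ term, I would write
\begin{equation}
G_i=[(-1)^j(p-\omega)]^{i-1}K_1\,J^{(i)}+\sum_{q=1}^{i}\binom{i}{q}[(-1)^j(p-\omega)]^{i-q}\,J^{(i-q)}\,G_{1,q},
\end{equation}
using the identity $[(-1)^j(p-\omega)]^{i-1}K_1^{(q)}=[(-1)^j(p-\omega)]^{i-q}\,G_{1,q}$ for $1\le q\le i$, which holds because $G_{1,q}=[(-1)^j(p-\omega)]^{q-1}K_1^{(q)}$. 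Every term is now continuous on $T_j$: the leftover powers $[(-1)^j(p-\omega)]^{i-1}$ and $[(-1)^j(p-\omega)]^{i-q}$ carry nonnegative integer exponents (since $i\ge 1$ and $q\le i$) and are continuous because $(-1)^j(p-\omega)\ge 0$ on $T_j$; the factors $J^{(i-q)}$ are continuous by the previous step; $K_1$ is continuous by assumption; and each $G_{1,q}$ with $1\le q\le i\le m+1$ is continuous by assumption. Hence $G_i\in C(T_j)$ for $1\le i\le m+1$, hypothesis~(1) holds, and Theorem~\ref{anti_volt_thm} applies.

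I expect the only real obstacle to be the weight distribution in the last step: one must split the singular weight $[(-1)^j(p-\omega)]^{i-1}$ so that precisely the power $[(-1)^j(p-\omega)]^{q-1}$ is paired with each derivative $K_1^{(q)}$ to reconstitute the continuous function $G_{1,q}$, while checking that the surplus exponent $i-q$ stays nonnegative so that the remaining power factor is continuous rather than singular. This is exactly where the factorization pays off: all the regularity deficit is confined to $K_1$, and the hypotheses on the $G_{1,q}$ are tailored to absorb it, whereas the ancillary verification that $J$ and its derivatives up to order $m+1$ remain continuous is a routine, if slightly tedious, application of the Leibniz rule.
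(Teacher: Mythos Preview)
Your proposal is correct and follows essentially the same route as the paper: set $J=\prod_{s=2}^{l}K_s$, use the Leibniz rule to expand $K^{(i)}$, and distribute the weight $[(-1)^j(p-\omega)]^{i-1}$ so that each $K_1^{(q)}$ pairs with $[(-1)^j(p-\omega)]^{q-1}$ to form $G_{1,q}$, leaving a nonnegative leftover power. If anything, your write-up is slightly more careful than the paper's---you retain the weight on the $q=0$ term and spell out why $J^{(i)}\in C(T_j)$ via the iterated Leibniz expansion, both of which the paper glosses over.
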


\section{Application to Radon transforms}
\label{application}
In this section, we explore the applications of generalized Abel equations to translation invariant Radon transforms.

\subsection{A translation invariant ellipsoid and hyperboloid Radon transform}

Let $\vx = \paren{x_1,\ldots,x_n}$ be our coordinate system, and let $f \in L^2_c\paren{\{0\leq x_n\leq \infty\}}$. In $n=2$ dimensions, we define an ellipse or hyperbola, with center $(y_1,0)\in\mathbb{R}^2$,  by the equation
\begin{equation}
\label{ellipse_equ}
(-1)^j(x_1-y_1)^2+sx_2^2=t,
\end{equation}
where $s>0$ is fixed, and $t>0$. When $j=0$, \eqref{ellipse_equ} is the equation of an ellipse, and when $j=1$, \eqref{ellipse_equ} is the equation for a hyperbola. We now define the elliptic and hyperbolic arc curves
\begin{equation}
\label{ellipse}
\mathcal{C}_j\paren{t,y_1}= \left\{\paren{x_1+y_1,\sqrt{\frac{t+(-1)^{j+1}x_1^2}{s}}} : x_1 \in \Omega_j\right\},
\end{equation}
where $\Omega_0=[-\sqrt{t},\sqrt{t}]$ and $\Omega_1=\mathbb{R}$. See figure \ref{fig0}, for an example elliptic and hyperbolic arc.
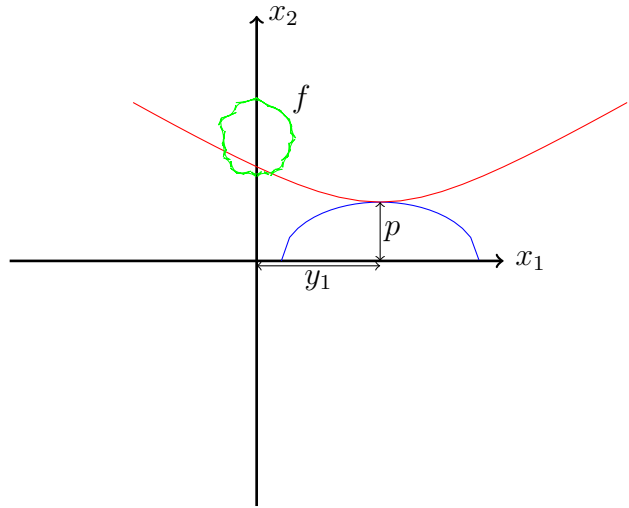
\begin{figure}[!h]
\centering
\begin{tikzpicture}[scale=0.65]
\draw [->,line width=1pt] (-5,0)--(5,0)node[right] {$x_1$};
\draw [->,line width=1pt] (0,-5)--(0,5)node[right] {$x_2$};
\draw [blue,domain=0.5:4.5] plot(\x, {0.6*sqrt(4-pow((\x-2.5),2))});
\draw [red,domain=-2.5:7.5] plot(\x, {0.6*sqrt(4+pow((\x-2.5),2))});
\draw [<->] (0,-0.1)--(2.5,-0.1);
\node at (1.25,-0.4) {$y_1$};
\draw [<->] (2.5,0)--(2.5,1.2);
\node at (2.75,0.6) {$p$};
\coordinate (c) at (0,2.5);
\draw[green,rounded corners=1mm] (c) \irregularcircle{0.75cm}{1mm};
\node at (0.9,3.3) {$f$};
\end{tikzpicture}
\caption{Elliptic arc ($\mathcal{C}_0$) and hyperbola curve ($\mathcal{C}_1$) examples. \tred{Red} - hyperbola. \tblue{Blue} - Elliptic arc. \textcolor{green}{Green} - $f$. $p = \sqrt{t/s}$.}
\label{fig0}
\end{figure}

When $n=2$, we define the Radon transform $\mathcal{R}_j$, for $j=0,1$, as
\begin{equation}
\begin{split}
\mathcal{R}_jf(t,y_1) &= \int_{\mathcal{C}_j\paren{t,y_1}} f \mathrm{d}\mathcal{C}_j \\
&= \int_{\Omega_j}\frac{\sqrt{t+\paren{\frac{1}{s}+(-1)^{j+1}}x_1^2}}{\sqrt{t+(-1)^{j+1}x_1^2}}f\paren{x_1+y_1,\sqrt{\frac{t+(-1)^{j+1}x_1^2}{s}}}\mathrm{d}x_1,
\end{split}
\end{equation}
where $\Omega_0=[-\sqrt{t},\sqrt{t}]$ and $\Omega_1=\mathbb{R}$. 

When $n\geq 3$, the surface of revolution of $\mathcal{C}_j$, with central axis $x_n$ and center $(\vy,0)\in\mathbb{R}^{n}$, is
\begin{equation}
\mathcal{S}_j(t,\vy) = \left\{\paren{r\Theta+\vy,\sqrt{\frac{t+(-1)^{j+1}r^2}{s}}} : \Theta \in S^{n-2}, r\in\Omega^+_j\right\},
\end{equation}
where $\Omega^+_0=[0,\sqrt{t}]$ and $\Omega^+_1=[0,\infty)$. See figure \ref{F3}, some some example $\mathcal{S}_j$ in $n=3$ dimensions. We define
\begin{equation}
\begin{split}
\mathcal{R}_jf(t,\vy) &= \int_{\mathcal{S}_j(t,\vy)} f \mathrm{d}\mathcal{S}_j \\
&= \int_{\Omega^+_j}r^{n-2}\int_{S^{n-2}}\frac{\sqrt{t+\paren{\frac{1}{s}+(-1)^{j+1}}r^2}}{\sqrt{t+(-1)^{j+1}r^2}}f\paren{r\Theta+\vy,\sqrt{\frac{t+(-1)^{j+1}r^2}{s}}}\mathrm{d}S^{n-2}\mathrm{d}r,
\end{split}
\end{equation}
when $n\geq 3$, where $\mathrm{d}S^{n-2}$ denotes the surface measure on $S^{n-2}$. In $n=2$ dimensions, $\mathcal{R}_j$ defines the integrals of $f$ over elliptic and hyperbolic curves with centers on the line $\{x_2=0\}$, where $j=0$ corresponds to ellipses and $j=1$ to hyperbola. In $n\geq 3$ dimensions, $\mathcal{R}_j$ defines the integrals of $f$ over the surfaces of revolution of elliptic and hyperbolic arcs, with axis of revolution parallel to $x_n$, and with centers on the plane $\{x_n=0\}$.

\begin{figure}[!h]
\centering
\begin{subfigure}{0.32\textwidth}
\includegraphics[width=0.9\linewidth, height=4cm, keepaspectratio]{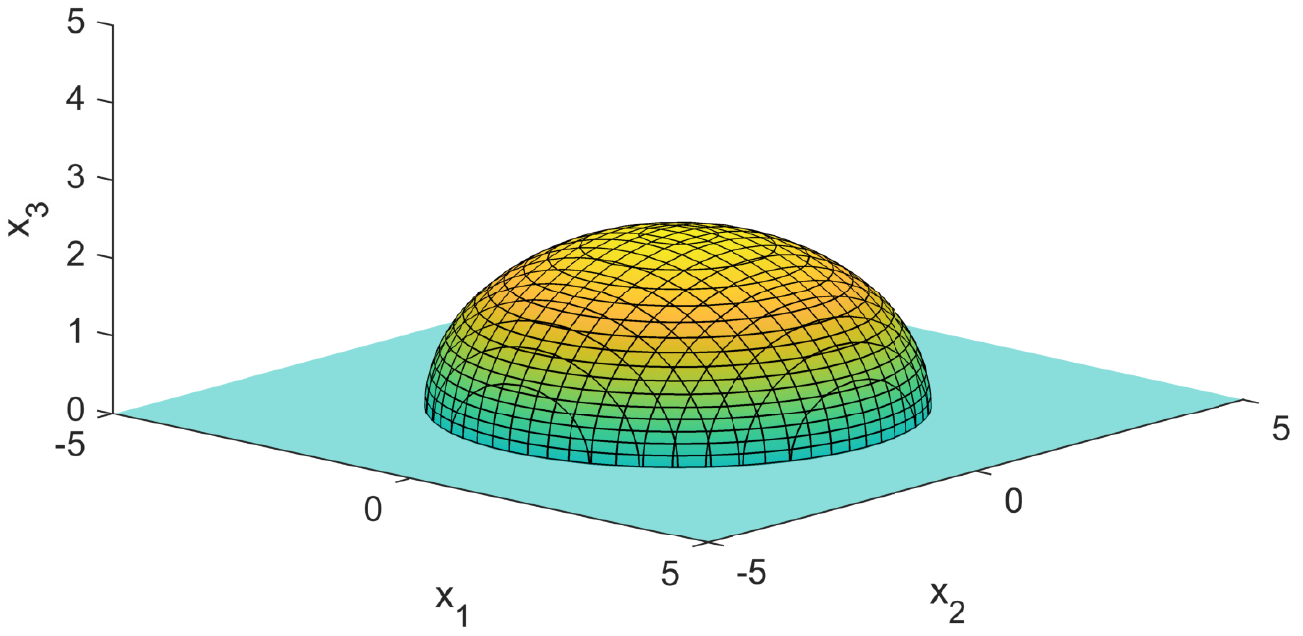}
\subcaption{elliptic arc of revolution ($\mathcal{S}_0$)} \label{F3a}
\end{subfigure}
\hspace{1cm}
\begin{subfigure}{0.32\textwidth}
\includegraphics[width=0.9\linewidth, height=4cm, keepaspectratio]{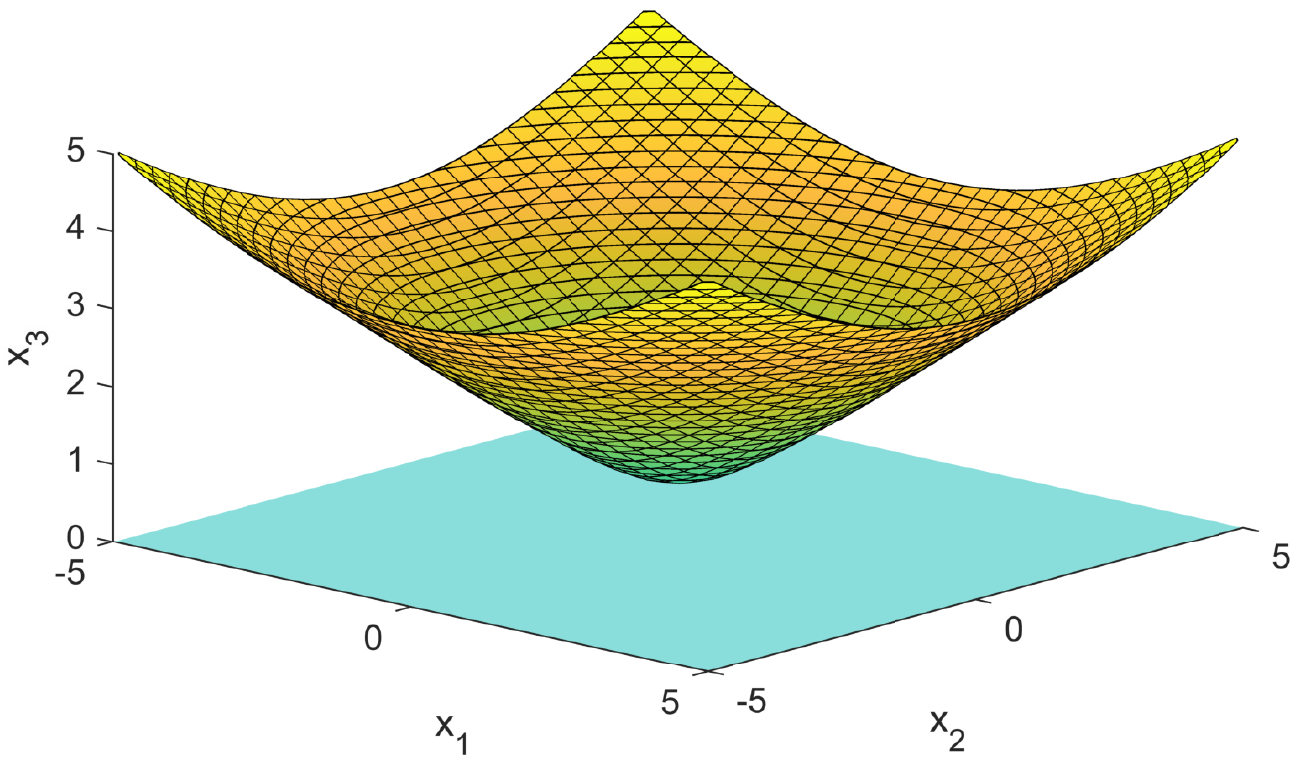} 
\subcaption{hyperboloid ($\mathcal{S}_1$)} \label{F3b}
\end{subfigure}
\caption{Example elliptic arc of revolution and hyperboloid surfaces in $n=3$ dimensions, with centers at the origin. }
\label{F3}
\end{figure}

\noindent We now have our second main theorem.

\begin{theorem}
\label{trans_inv_thm}
$\mathcal{R}_j$ is injective on domain $L^2_c\paren{\{a\leq x_n\leq b\}}$,  for $j=0,1$, where $0<a<b$.
\end{theorem}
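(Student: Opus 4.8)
The plan is to exploit the translation invariance of $\mathcal{R}_j$ in the center variable $\vy$ by taking a Fourier transform in $\vy$, which should diagonalize the operator into a family of one-dimensional integral equations indexed by the dual frequency $\vxi$, each of which I then recognize as a generalized Abel equation to which Theorem \ref{anti_volt_thm} applies. Concretely, writing $\hat f(\vxi,x_n)$ for the Fourier transform of $f$ in its first $n-1$ variables and substituting $\vy' = r\Theta + \vy$ in the inner spherical integral, I expect the angular integral to collapse to the Fourier transform of the surface measure on $S^{n-2}$, i.e.\ a multiple of $(r|\vxi|)^{-\alpha}J_\alpha(r|\vxi|)$ with $\alpha = (n-3)/2$. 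The structural fact I will use throughout is that $z^{-\alpha}J_\alpha(z)$ is an even entire function of $z$, hence analytic in $z^2$ and nonzero at $z=0$; this is what ultimately makes the reduced kernel smooth and non-vanishing on the diagonal. (The case $n=2$ is the base case, where the angular factor is simply $2\cos(r\xi)$, matching $\alpha=-1/2$.)

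After the Fourier transform, for each fixed $\vxi$ the equation is an integral in $r$ against $\hat f(\vxi,x_n(r))$ with $x_n = \sqrt{(t+(-1)^{j+1}r^2)/s}$. I will change variables from $r$ to $\omega = x_n^2$ and set $p = t/s$, so that $r^2 = (-1)^j s(p-\omega)$ and the integration runs over $[a^2,p]$ when $j=0$ and over $[p,b^2]$ when $j=1$, matching the domains $B_j(p)$ of Theorem \ref{anti_volt_thm} on the relabelled interval $[a^2,b^2]$. Using $r\,\mathrm{d}r = (-1)^{j+1}s x_n\,\mathrm{d}x_n$ and the weight $r^{n-2}$, I expect the factor $r^{n-3} = s^{\alpha}\left[(-1)^j(p-\omega)\right]^{\alpha}$ to carry exactly the fractional power, so that with $F(\vxi,\omega) = \hat f(\vxi,\sqrt\omega)$ the equation becomes
\[
\widehat{\mathcal{R}_j f}(sp,\vxi) = \int_{B_j(p)} \left[(-1)^j(p-\omega)\right]^{\alpha} K(p,\omega)\, F(\vxi,\omega)\,\mathrm{d}\omega, \qquad \alpha = \tfrac{n-3}{2},
\]
where $K$ is, up to a nonzero constant, the product of $1/\sqrt\omega$, the positive factor $\sqrt{s\omega + (-1)^j(p-\omega)}$, and the even-entire Bessel factor evaluated at $r^2 = (-1)^j s(p-\omega)$.

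It then remains to verify the two hypotheses of Theorem \ref{anti_volt_thm} for this $K$. Smoothness of $K$ on $T_j$ (condition (1)) should follow from three facts: the Bessel factor is smooth in $r^2$, hence in $p-\omega$; the radicand $s\omega + (-1)^j(p-\omega) = s x_n^2 + r^2/s$ is strictly positive; and, crucially, $\omega = x_n^2 \ge a^2 > 0$ on the support, so $1/\sqrt\omega$ is smooth. This last point is exactly where the hypothesis $0<a$ enters, since the coordinate change degenerates at $x_n=0$. Since $K$ is smooth, each $G_i = [(-1)^j(p-\omega)]^{i-1}K^{(i)}$ is automatically continuous, so condition (1) holds and in fact Corollary \ref{corr1} applies directly. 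For condition (2), on the diagonal $\omega=p$ one has $r=0$, so the Bessel factor reduces to the nonzero constant $z^{-\alpha}J_\alpha(z)\big|_{z=0}=2^{-\alpha}/\Gamma(\alpha+1)$ and the radical to $\sqrt{sp}$, giving $K(p,p)\neq 0$ for every $\vxi$. With both conditions verified, Theorem \ref{anti_volt_thm} yields a unique solution $F(\vxi,\cdot)\in L^2([a^2,b^2])$; thus $\mathcal{R}_j f = 0$ forces $F(\vxi,\cdot)=0$ for a.e.\ $\vxi$, hence $\hat f \equiv 0$ and $f\equiv 0$, proving injectivity.

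I expect the main obstacle to be the verification that the reduced kernel $K$ is genuinely smooth and non-vanishing on the diagonal, rather than the formal reduction itself. The delicate points are controlling the angular (Bessel) factor uniformly in $\vxi$ --- showing it is analytic in $r^2$ and identifying its nonzero value at $r=0$ --- and confirming that the exponent produced by $r^{n-3}$ is precisely $\alpha=(n-3)/2$, so that no spurious singularity survives in $K$ and the hypothesis $0<a$ is exactly what is needed to tame the remaining $1/\sqrt\omega$ factor.
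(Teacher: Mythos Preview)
Your proposal is correct and reaches the same conclusion as the paper, but via a genuinely cleaner route. The paper also Fourier-transforms in $\vy$, but then substitutes $\omega=x_n$ and $p=\sqrt{t/s}$, so that the angular factor becomes a function of $\sqrt{(-1)^j(p^2-\omega^2)}$. That square root makes the resulting kernel $K_1$ \emph{not} smooth on $T_j$, and the paper must therefore invoke Corollary~\ref{corr2} and carry out a rather involved verification of the conditions on the $G_{1,i}$ using Fa\`a di Bruno's formula, treating $n=2$ and $n\ge 3$ separately. Your choice $\omega=x_n^2$, $p=t/s$ linearizes the dependence, since $r^2=(-1)^j s(p-\omega)$, and the fact that $z^{-\alpha}J_\alpha(z)$ is even and entire then makes the angular factor smooth in $p-\omega$. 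This lets you bypass Corollary~\ref{corr2} entirely and apply Corollary~\ref{corr1} directly, with a uniform argument for all $n\ge 2$. The paper's route, by contrast, illustrates that Theorem~\ref{anti_volt_thm} is applicable even to non-smooth $K$, which is part of the point of the paper; your route shows that for this particular transform the extra generality is not needed. One small remark: you should note that the bijection $x_n\mapsto x_n^2$ carries $L^2([a,b])$ isomorphically to $L^2([a^2,b^2])$ precisely because $a>0$ (the Jacobian $2x_n$ is bounded above and below), so recovering $F(\vxi,\cdot)$ really does recover $\hat f(\vxi,\cdot)$; this is the second place the hypothesis $a>0$ enters.
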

\begin{proof}
We split the proof into two parts. First, we prove the $n=2$ case using standard Volterra equation theory. Then, we generalize to $n\geq 3$ dimensions, and use Theorem \ref{anti_volt_thm} to prove the result.\\
\\
\textbf{The $n=2$ case} - 
For this part of the proof, we consider the Radon transform
\begin{equation}
\mathcal{R}_jf(t,y_1)=\int_{\Omega_j}\frac{\sqrt{t+\paren{\frac{1}{s}+(-1)^{j+1}}x_1^2}}{\sqrt{t+(-1)^{j+1}x_1^2}}f\paren{x_1+y_1,\sqrt{\frac{t+(-1)^{j+1}x_1^2}{s}}}\mathrm{d}x_1,
\end{equation}
where $\Omega_0=[-\sqrt{t},\sqrt{t}]$ and $\Omega_1=\mathbb{R}$. Taking the Fourier transform in $y_1$ yields
\begin{equation}
\widehat{\mathcal{R}_jf}(t,\xi)=\int_{\Omega^+_j}\cos(x_1\xi)\frac{\sqrt{t+\paren{\frac{1}{s}+(-1)^{j+1}}x_1^2}}{\sqrt{t+(-1)^{j+1}x_1^2}}\hat{f}\paren{\xi,\sqrt{\frac{t+(-1)^{j+1}x_1^2}{s}}}\mathrm{d}x_1,
\end{equation}
where $\xi$ is dual to $x_1$, $\Omega^+_0=[0,\sqrt{t}]$ and $\Omega^+_1=[0,\infty)$. Making the substitution $x_2=\sqrt{\frac{t+(-1)^{j+1}x_1^2}{s}}$, yields
\begin{equation}
\widehat{\mathcal{R}_jf}(t,\xi)=\int_{B_j}\cos\paren{\xi\sqrt{(-1)^{j}(t-s x_2^2)}}\sqrt{(-1)^{j}\frac{st}{t-sx_2^2}+\paren{1+(-1)^{j+1}s}}\hat{f}\paren{\xi,x_2}\mathrm{d}x_2,
\end{equation}
where $B_0=[0,\sqrt{t/s}]$ and $B_1=[\sqrt{t/s},b]$. Substituting $p=\sqrt{t/s}$ gives
\begin{equation}
\label{volt}
\begin{split}
&\widehat{\mathcal{R}_jf}(p,\xi)=\\
&\int_{B_j}\cos\paren{\xi\sqrt{s}\sqrt{(-1)^{j}\paren{p^2-x_2^2}}}\sqrt{(-1)^{j}s\frac{p^2}{p^2-x_2^2}+\paren{1+(-1)^{j+1}s}}\hat{f}\paren{\xi,x_2}\mathrm{d}x_2\\
&=\int_{B_j}K_j(\xi,p,x_2)\hat{f}\paren{\xi,x_2}\mathrm{d}x_2,
\end{split}
\end{equation}
a weakly singular Volterra equation. Now we can apply Theorem \ref{weak_volt_thm} to recover $f$. To see this, let
$$K_j(\xi,p,x_2)=\frac{H_j(\xi,p,x_2)}{\sqrt{(-1)^{j}(p-x_2)}},$$
with
\begin{equation}
\begin{split}
&H_{j}(\xi,p,x_2)=\\
&\left[\cos\paren{s\xi\sqrt{(-1)^{j}\paren{p^2-x_2^2}}}\right]\left[\sqrt{s\frac{p^2}{p+x_2}+(-1)^{j}\paren{1+(-1)^{j+1}s}(p-x_2)}\right]\\
&= \left[\cos\paren{s\xi\sqrt{(-1)^{j}\paren{p^2-x_2^2}}}\right] \left[ \frac{\sqrt{(-1)^j(p^2-x_2^2)+ sx_2^2}}{\sqrt{p+x_2}} \right]\\
&=h_j(\xi,p,x_2)g_j(p,x_2).
\end{split}
\end{equation}
Let $T_j$ be defined as in Theorem \ref{weak_volt_thm}. We have
$$H_j(\xi,p,p)=\sqrt{\frac{s}{2}}\cdot p\neq 0,$$ for $p\in[a,b]$, $\frac{\mathrm{d}}{\mathrm{d}p}g_j\in C(T_j)$, and
\begin{equation}
\begin{split}
\frac{\mathrm{d}}{\mathrm{d}p}h_{j}(\xi,p,x_2)&=\frac{(-1)^{j+1}\xi p \sqrt{s}}{\sqrt{(-1)^{j}(p^2-x_2^2)}}\sin\paren{\xi \sqrt{s}\sqrt{(-1)^{j}\paren{p^2-x_2^2}}}\\
&=(-1)^{j+1}p s\xi^2\sinc\paren{\xi \sqrt{s}\sqrt{(-1)^{j}\paren{p^2-x_2^2}}}.
\end{split}
\end{equation}
Thus, $\frac{\mathrm{d}}{\mathrm{d}p}h_j(\xi,\cdot,\cdot)\in C(T_j)$ and $\frac{\mathrm{d}}{\mathrm{d}p}H_j(\xi,\cdot,\cdot)\in C(T_j)$, for any fixed $\xi\in\mathbb{R}$, and for $j=0,1$. The result follows from Theorem \ref{weak_volt_thm}.\\
\\
\textbf{The $n\geq 3$ case} - 
For this part of the proof, we consider
\begin{equation}
\begin{split}
&\mathcal{R}_jf(t,\vy)=\\
&\int_{\Omega^+_j}r^{n-2}\int_{S^{n-2}}\frac{\sqrt{t+\paren{\frac{1}{s}+(-1)^{j+1}}r^2}}{\sqrt{t+(-1)^{j+1}r^2}}f\paren{r\Theta+\vy,\sqrt{\frac{t+(-1)^{j+1}r^2}{s}}}\mathrm{d}S^{n-2}\mathrm{d}r,
\end{split}
\end{equation}
where $\mathrm{d}S^{n-2}$ is the surface measure on $S^{n-2}$ and $\Theta\in S^{n-2}$. Taking the Fourier transform in $\vy$ yields
\begin{equation}
\begin{split}
&\widehat{\mathcal{R}_j}(t,\vxi)=\\
&\int_{\Omega^+_j}r^{n-2}\frac{\sqrt{t+\paren{\frac{1}{s}+(-1)^{j+1}}r^2}}{\sqrt{t+(-1)^{j+1}r^2}}\left[\int_{S^{n-2}}e^{i r(\Theta\cdot\vxi)}\mathrm{d}S^{n-2}\right]\hat{f}\paren{\vxi,\sqrt{\frac{t+(-1)^{j+1}r^2}{s}}}\mathrm{d}r.
\end{split}
\end{equation}
Substituting $\omega=\sqrt{\frac{t+(-1)^{j+1}r^2}{s}}$ gives
\begin{equation}
\label{volty_2}
\begin{split}
\widehat{\mathcal{R}_jf}(p,\vxi)&=s^{\frac{n-2}{2}}\int_{B_j}\left[(-1)^{j}\paren{p^2-\omega^2}\right]^{\frac{n-3}{2}}\sqrt{(-1)^j(p^2 -\omega^2) + s\omega^2}\\
&\times\left[\int_{S^{n-2}}e^{i \sqrt{s} \sqrt{(-1)^{j}\paren{p^2-\omega^2}}(\Theta\cdot\vxi)}\mathrm{d}S^{n-2}\right]\hat{f}\paren{\vxi,\omega}\mathrm{d}\omega \\
&=s^{\alpha+\frac{1}{2}}\int_{B_j}\left[(-1)^{j}\paren{p-\omega}\right]^{\alpha}\left[\prod_{i=1}^3K_i(p,\omega)\right]\hat{f}\paren{\vxi,\omega}\mathrm{d}\omega,
\end{split}
\end{equation}
a generalized Abel equation, where $\alpha = \frac{n-3}{2} \geq 0$, $B_0=[0,p]$, $B_1=[p,b]$, and $p=\sqrt{t/s}$. The kernels $K_i$, for $i=1,2,3$, are defined
\begin{equation}
\begin{split}
K_1(p,\omega) &= \int_{S^{n-2}}e^{i \sqrt{s} \sqrt{(-1)^{j}\paren{p^2-\omega^2}}(\Theta\cdot\vxi)}\mathrm{d}S^{n-2}\\
&= \alpha_n\int_{0}^{\pi} e^{i \sqrt{s} |\vxi| \sqrt{(-1)^{j}\paren{p^2-\omega^2}}\cos \varphi_{n-2}}\sin^{n-3}(\varphi_{n-2})\mathrm{d}\varphi_{n-2}\\
&= \alpha_n\int_{0}^{\pi} \cos\paren{\sqrt{s} |\vxi| \sqrt{(-1)^{j}\paren{p^2-\omega^2}}\cos \varphi_{n-2}}\sin^{n-3}(\varphi_{n-2})\mathrm{d}\varphi_{n-2},
\end{split}
\end{equation}
where $\alpha_3=2$, $\alpha_4=2\pi$, and
\begin{equation}
\begin{split}
\alpha_n&=\int_{0}^{2\pi}\left[\int_{[0,\pi]^{n-4}}\prod_{i=1}^{n-4}\sin^{i}(\varphi_i)\mathrm{d}\varphi_1\ldots\mathrm{d}\varphi_{n-4}\right]\mathrm{d}\varphi_{n-3}\\
&=2\pi\int_{[0,\pi]^{n-4}}\prod_{i=1}^{n-4}\sin^{i}(\varphi_i)\mathrm{d}\varphi_1\ldots\mathrm{d}\varphi_{n-4},
\end{split}
\end{equation}
for $n\geq 5$, where $\varphi_1,\ldots,\varphi_{n-2}$ are standard spherical coordinates, with north pole $\frac{\vxi}{|\vxi|}$, which parametrize the unit $(n-2)$-sphere. The kernel $K_2$ is defined by
\begin{equation}
K_2(p,\omega) = (p + \omega)^{\alpha},
\end{equation}
and
\begin{equation}
\begin{split}
K_3(p,\omega) 
&= \sqrt{(-1)^j(p^2 -\omega^2) + s\omega^2}.
\end{split}
\end{equation}
Now,
$$ K(p,p) = \left[\int_{S^{n-2}}\mathrm{d}S^{n-2}\right] (2p)^{\alpha} \paren{p \sqrt{s}} = \paren{2^{\alpha}A_{n-2}\sqrt{s}} p^{\alpha + 1}  \neq 0, \ \text{for all}\ p\in[a,b],$$
where $A_{n-2}$ is the surface area of $S^{n-2}$. Let $T_j$ be defined as is Theorem \ref{weak_volt_thm}. Then $K^{(i)}_2, K^{(i)}_3 \in C(T_j)$, for any $i\geq 0$.

We have $K_1 \in C(T_j)$, and 
\begin{equation}
\begin{split}
&K^{(1)}_1(p,\omega) = \\
& \frac{\alpha_n \sqrt{s}|\vxi| p}{ (-1)^{j+1}}  \int_{0}^{\pi} \frac{ \sin\paren{\sqrt{s} |\vxi| \sqrt{(-1)^{j}\paren{p^2-\omega^2}}\cos \varphi_{n-2}} }{ \sqrt{(-1)^{j}\paren{p^2-\omega^2}} } \cos \varphi_{n-2} \sin^{n-3}(\varphi_{n-2})\mathrm{d}\varphi_{n-2} \\
&= \frac{\alpha_n s|\vxi|^2 p }{ (-1)^{j+1}} \int_{0}^{\pi} \sinc\paren{\sqrt{s} |\vxi| \sqrt{(-1)^{j}\paren{p^2-\omega^2}}\cos \varphi_{n-2}} \cos^2 \varphi_{n-2} \sin^{n-3}(\varphi_{n-2})\mathrm{d}\varphi_{n-2}.
\end{split}
\end{equation}
Define $J \in C^{\infty} (\mathbb{R})$ as
$$J(u) = \alpha_n (-1)^{j+1}s|\vxi|^2  \int_{0}^{\pi} \sinc\paren{\sqrt{s} |\vxi| u \cos \varphi_{n-2}} \cos^2 \varphi_{n-2} \sin^{n-3}(\varphi_{n-2})\mathrm{d}\varphi_{n-2}.$$
Then,
$$G_{1,1}(p,\omega) = K^{(1)}_1(p,\omega) = p \cdot J\paren{\sqrt{(-1)^{j}\paren{p^2-\omega^2}}} \in C(T_j).$$
Letting $g(p,\omega) = \sqrt{(-1)^{j}\paren{p^2-\omega^2}}$, we have, by Theorem \ref{di_bruno_thm}
\begin{equation}
\begin{split}
\label{Bruno}
\paren{J \circ g}^{(i)}(p,\omega) &= \frac{\mathrm{d}^i}{\mathrm{d}p^i} \paren{J \circ g}(p,\omega)\\
&= \sum_{\gamma \in \Pi} J^{(|\gamma|)}\paren{ g(p,\omega) } \cdot \prod_{\tau \in \gamma} g^{(|\tau|)}(p,\omega),
\end{split}
\end{equation}
where $\Pi$ is the set of partitions of $\{1,\ldots,i\}$, and $i \geq 1$. 

We have
\begin{equation}
g^{(q)}(p,\omega) = P_{j,q}(p,\omega) \left[(-1)^{j}\paren{p^2-\omega^2}\right]^{\frac{1}{2} - q},
\end{equation}
for $q\geq 0$, where the $P_{j,q} \in C^{\infty}(\mathbb{R}^2)$ are polynomials in $p$ and $\omega$. We can define the $P_{j,q}$ recursively
$$ P_{j,q}(p, \omega) = (p^2 - \omega^2) P^{(1)}_{j,q-1}(p, \omega) - \paren{2q-3}p \cdot P_{j,q-1}(p, \omega),\ \ \text{for}\ \ j\geq 2,$$
with $P_{j,1}(p,\omega) = (-1)^j p$, and $P_{j,0}(p,\omega) = 1$. 
It follows that, for $\gamma \in \Pi$,
\begin{equation}
\begin{split}
\prod_{\tau \in \gamma} g^{(|\tau|)}(p,\omega) &= \prod_{\tau \in \gamma} P_{j, |\tau|}(p,\omega) \left[(-1)^{j}\paren{p^2-\omega^2}\right]^{\frac{1}{2} - |\tau|}\\
&= \left[(-1)^{j}\paren{p^2-\omega^2}\right]^{\frac{|\gamma|}{2} - i} \prod_{\tau \in \gamma} P_{j, |\tau|}(p,\omega).
\end{split}
\end{equation}
Now, \eqref{Bruno} becomes
\begin{equation}
\begin{split}
\label{Bruno_1}
&\paren{J \circ g}^{(i)}(p,\omega) = \\
&\ \ \ \  \frac{1}{ \left[(-1)^{j}\paren{p^2-\omega^2}\right]^{i} }\sum_{\gamma \in \Pi} J^{(|\gamma|)}\paren{ g(p,\omega) } \cdot \left[(-1)^{j}\paren{p^2-\omega^2}\right]^{\frac{|\gamma|}{2}} \prod_{\tau \in \gamma} P_{j, |\tau|}(p,\omega).
\end{split}
\end{equation}
Hence,
$$H_{i,q}(p,\omega) = \left[(-1)^{j}\paren{p^2-\omega^2}\right]^{i} \paren{J \circ g}^{(q)}(p,\omega) \in C(T_j),$$
for all $1\leq q\leq i$.

Now, using Theorem \ref{general_leibniz}, we calculate the higher order derivatives of $K_1$,
\begin{equation}
K^{(i)}_1(p,\omega) = p \cdot \paren{J \circ g}^{(i-1)}(p,\omega) + (i-1) \paren{J \circ g}^{(i-2)}(p,\omega),
\end{equation}
for $i \geq 2$. It follows that
\begin{equation}
\begin{split}
G_{1,i}(p,\omega) &= \left[(-1)^j(p-\omega)\right]^{i-1}K_1^{(i)}(p,\omega)\\
&= p H_{i-1,i-1}(p,\omega) + (i-1) H_{i-1,i-2}(p,\omega) \in C(T_j),
\end{split}
\end{equation}
for $i\geq 2$. Thus, $K$ satisfies the conditions of Corollary \ref{corr2}, and we can solve \eqref{volty_2} for $\hat{f}(\vxi,\cdot) \in L^2_c([a,b])$, for any fixed $\vxi \in \mathbb{R}^{n-1}$. After which, $f \in L^2_c\paren{ \{ a \leq x_n \leq b\} }$ can be recovered uniquely by inverse Fourier transform.
\end{proof}

\begin{remark}
Let $f \in L^2_c\paren{ [-c,c]^{n-1} \times [a,b] }$, where $c>0$, and $0<a<b$, and let us consider the special case when $s=1$ and $j=0$. In this case, $\mathcal{R}_0f(t,\vy)$ defines the integrals of $f$ over spheres, radius $\sqrt{t}$, center $\vy$. By Theorem \ref{trans_inv_thm}, to reconstruct $f$ uniquely from $\mathcal{R}_0f$ when $s=1$, it is sufficient to know $\mathcal{R}_0f(t,\vy)$ for $\vy \in \left[ -(c+b),(c+b) \right]^{n-1}$, and for $\sqrt{t} \in [a,b]$. We have thus derived an inversion method for the spherical Radon transform with centers on a plane in $n$-dimensions, which uses only the sphere centers on a finite volume cube, and radii on the bound $[a,b]$. As discussed in the introduction, the formulae of \cite{p26} uses all sphere centers and radii, and in \cite{p28},  the authors present support theorems, but no inversion method. In \cite{p34}, the authors present an inversion formula for $\mathcal{R}_0$ in the $s=1$ case, which applies to distributions, $f$, and only uses limited sphere radii and centers. However, their inversion formulae only apply to the continuous part of $f$. Theorem \ref{trans_inv_thm} applies to $L^2_c$ functions. Thus, Theorem \ref{trans_inv_thm} derives important inversion methods for the spherical Radon transform, and for more general ellipsoid and hyperboloid transforms.
\end{remark}

Let $\mathcal{C}^+_j\paren{t,y_1} = \mathcal{C}_j\paren{t,y_1}$, and let $\mathcal{C}^-_j\paren{t,y_1}$ be the reflection of $\mathcal{C}^+_j\paren{t,y_1}$ in $\{x_2 = 0\}$. Let $\mathcal{S}^+_j(t,\vy) = \mathcal{S}_j(t,\vy)$ and $\mathcal{S}^-_j(t,\vy)$ be the reflections of one another in $\{x_n = 0\}$. Then, we define
\begin{equation}
\mathcal{E}_jf(t,y_1) = \int_{\mathcal{C}^-_j\paren{t,y_1}} f \mathrm{d}\mathcal{C}^-_j + \int_{\mathcal{C}^+_j\paren{t,y_1}} f \mathrm{d}\mathcal{C}^+_j,
\end{equation}
when $n = 2$, and
\begin{equation}
\mathcal{E}_jf(t,\vy) = \int_{\mathcal{S}^-_j(t,\vy)} f \mathrm{d}\mathcal{S}^-_j + \int_{\mathcal{S}^+_j(t,\vy)} f \mathrm{d}\mathcal{S}^+_j,
\end{equation}
when $n\geq 3$. When $s>1$, $\mathcal{E}_0f$ defines the integrals of $f$ over two-sided elliptic arcs of revolution. When $0< s\leq 1$, $\mathcal{E}_0f$ defines the integrals of $f$ over ellipsoids of revolution. $\mathcal{E}_1$ defines integrals over two-sheeted hyperboloids. We now have the following corollary.

\begin{corollary}
\label{corr_null}
The null space of $\mathcal{E}_j$, on domain $L^2_c\paren{\Delta_{a,b}}$, where $0<a<b<\infty$ and $\Delta_{a,b} = \{a\leq x_n \leq b\} \cup \{-b\leq x_n \leq -a\}$, is completed composed of odd functions in $x_n$
\end{corollary}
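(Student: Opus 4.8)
The plan is to reduce $\mathcal{E}_j$ to the operator $\mathcal{R}_j$ already treated in Theorem \ref{trans_inv_thm}, by exploiting the reflection symmetry built into the definition of $\mathcal{E}_j$. Write a point as $\vx = (x_1,\ldots,x_n)$ and let $\sigma\vx = (x_1,\ldots,x_{n-1},-x_n)$ denote reflection in $\{x_n = 0\}$ (with $x_n = x_2$ when $n=2$). First I would decompose any $f \in L^2_c(\Delta_{a,b})$ into its even and odd parts in $x_n$, namely $f = f_e + f_o$ with $f_e(\vx) = \tfrac12\paren{f(\vx) + f(\sigma\vx)}$ and $f_o(\vx) = \tfrac12\paren{f(\vx) - f(\sigma\vx)}$. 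Since $\Delta_{a,b}$ is symmetric about $\{x_n = 0\}$, both $f_e$ and $f_o$ again belong to $L^2_c(\Delta_{a,b})$.

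The key identity is $\mathcal{E}_j f = 2\,\mathcal{R}_j f_e$. To establish it, note that $\sigma$ maps $\mathcal{S}^+_j(t,\vy)$ bijectively onto $\mathcal{S}^-_j(t,\vy)$ (and likewise $\mathcal{C}^+_j$ onto $\mathcal{C}^-_j$ when $n=2$), and, being an orthogonal reflection, is an isometry that preserves the induced surface measure. Hence $\int_{\mathcal{S}^-_j} f\,\mathrm{d}\mathcal{S}^-_j = \int_{\mathcal{S}^+_j} (f\circ\sigma)\,\mathrm{d}\mathcal{S}^+_j$, and adding the integral of $f$ over $\mathcal{S}^+_j$ gives
\[
\mathcal{E}_j f = \mathcal{R}_j\paren{f + f\circ\sigma} = \mathcal{R}_j\paren{2 f_e} = 2\,\mathcal{R}_j f_e.
\]
In particular, if $f$ is odd then $f_e = 0$ and $\mathcal{E}_j f = 0$, so the null space already contains every odd function in $L^2_c(\Delta_{a,b})$.

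For the reverse inclusion, I would use that the surface $\mathcal{S}^+_j(t,\vy)$ lies entirely in $\{x_n \geq 0\}$, so $\mathcal{R}_j f_e(t,\vy)$ depends only on the values of $f_e$ on $\{x_n \geq 0\}$; moreover, the restriction $f_e|_{\{x_n \geq 0\}}$, extended by zero, is supported in $\{a \leq x_n \leq b\}$ and lies in $L^2_c\paren{\{a\leq x_n\leq b\}}$. Thus, if $\mathcal{E}_j f = 0$, the identity above forces $\mathcal{R}_j\paren{f_e|_{\{x_n\geq 0\}}} = 0$, and Theorem \ref{trans_inv_thm} (injectivity of $\mathcal{R}_j$ on $L^2_c\paren{\{a\leq x_n\leq b\}}$) gives $f_e|_{\{x_n > 0\}} = 0$. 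Since $f_e$ is even, it vanishes identically, whence $f = f_o$ is odd. Combining the two inclusions, the null space of $\mathcal{E}_j$ on $L^2_c(\Delta_{a,b})$ consists exactly of the functions that are odd in $x_n$.

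The only points requiring care are the measure-preservation claim for $\sigma$ and the verification that $\mathcal{R}_j f_e$ genuinely probes only the positive-$x_n$ half, so that Theorem \ref{trans_inv_thm} is applied to a function in the correct domain. Once these are checked the argument is a clean symmetry-plus-injectivity reduction requiring no new estimates, and it handles the cases $n=2$ and $n\geq 3$ simultaneously.
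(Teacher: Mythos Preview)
Your proposal is correct and follows essentially the same route as the paper: both reduce $\mathcal{E}_j$ to $\mathcal{R}_j$ via the reflection $\sigma$ in $\{x_n=0\}$ and then invoke Theorem \ref{trans_inv_thm}. The paper splits $f=f_1+f_2$ into its restrictions to the upper and lower half-spaces and observes $\mathcal{E}_jf=\mathcal{R}_j(f_1+\tilde f_2)$ with $\tilde f_2=f_2\circ\sigma$, which is exactly your identity $\mathcal{E}_jf=2\mathcal{R}_j f_e$ in a different notation (since $f_1+\tilde f_2=2f_e|_{\{x_n>0\}}$); your explicit verification of the reverse inclusion and of the measure-preservation of $\sigma$ are welcome additions the paper leaves implicit.
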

\begin{proof}
Let $f\in L^2_c\paren{\Delta_{a,b}}$ be such that $\mathcal{E}_jf = 0$. Let $f = f_1 + f_2$, where $f_i = f$ on $\{(-1)^ix_n<0\}$, and $f_i=0$ on $\mathbb{R}^n\backslash \{(-1)^ix_n<0\}$. Then, $\mathcal{E}_jf=\mathcal{R}_j\paren{f_1+\tilde{f_2}}=0$, where $\tilde{f_2}(\vx',x_n)=f_2(\vx',-x_n)$, and $\vx' = \paren{x_1,\ldots,x_{n-1}}$. Thus, by Theorem \ref{trans_inv_thm}, $f_1(\vx',x_n)=-f_2(\vx',-x_n)$, and $f(\vx',x_n)=-f_2(\vx',-x_n)+f_2(\vx',x_n)$ is odd in $x_n$. 
\end{proof}


\begin{discussion}
\label{dis_1}
In \cite{p13}, the authors analyze the stability of $\mathcal{E}_j$ from a microlocal perspective. They show that $\mathcal{E}_j$ is a Fourier Integral Operator (FIO) which satisfies the Bolker condition, if $f$ is supported on one side of $\{x_n = 0\}$. Thus, for such $f$, we would not expect to see any additional (unwanted) image singularities in the reconstruction.

Let us now break down some of the key instabilities in the inversion steps needed to solve the Volterra equations \eqref{volt} and \eqref{volty_2}, which are from the proof of Theorem \ref{trans_inv_thm}. When $n$ is even, we require $m = n/2$ differentiation steps and one Abel operation (which removes the singularity as in \eqref{weak_volt_1}) to solve \eqref{volt} and \eqref{volty_2}. The Abel operator is order $1/2$ smoothing on Sobolev scale, so in total this means that, roughly speaking, $(n-1)/2$ differentiation steps are needed to invert \eqref{volt} and \eqref{volty_2}, when $n$ is even. When $n$ is odd, no Abel operation is needed, as there is no singularity on the kernel diagonal, and, as in the even case, $(n-1)/2$ differentiation steps are needed to invert \eqref{volty_2}. The remaining inversion operations, i.e., Fourier transformation, change of variables, and inverting a second type Volterra equation with bounded kernel, do not require any further differentiation steps, and are more stable. As the dimension increases, so too does the order of smoothing and the ill-posedness, and the increase in smoothing appears to grow at the same rate as the classical hyperplane Radon transform, which is also order $(n-1)/2$ smoothing on Sobolev scale \cite[Theorem 5.1]{p31}. This discussion does not provide proof of this, however, and Sobolev space theory should be explored to address this. We leave this for future work as it is beyond the scope of this paper.
\end{discussion}

\subsection{A generalization}
\label{gen_section}
In this subsection, we present a generalization of the results of Theorem \ref{trans_inv_thm}. Let $f\in L^2_c\paren{ \{a\leq x_n\leq b\} }$, where $0\leq a<b$, Let $r = r(p,\omega) \in C\paren{T_0}$, where $T_0 = \{(p,\omega) : a\leq p\leq b, a\leq \omega \leq p\}$. Then, we define the generalized Radon transform, for $n\geq 2$,
\begin{equation}
\label{gen_radon}
\mathcal{R}f(p,\vy) = \int_{S^{n-2}}\int_{a}^p\sqrt{1+\paren{\frac{\mathrm{d}r}{\mathrm{d}\omega}}^2}r(p,\omega)^{n-2}f\paren{ r(p,\omega)\Theta+\vy,\omega }\mathrm{d}\omega\mathrm{d}S^{n-2}.
\end{equation}
When $n=2$, $\Theta = \pm 1$, and the integral $\int_{S^{n-2}} = \sum_{\Theta = \pm 1}$ becomes a sum over two points, and $\mathcal{R}f$ defines the integral of $f$ over the curve $\mathcal{C} = \{ \paren{r(p,\omega)+y_2,\omega} : \omega \in [0,p]\}$, and its reflection in the line $\{(y_2,x_2) : x_2\in \mathbb{R}\}$. When $n\geq 3$, $\mathcal{R}f$ defines the integrals of $f$ over the surfaces of revolution of $\mathcal{C}$, with central axis $\{(\vy,x_n) : x_n \in \mathbb{R}\}$.

We now have the following theorem, which is a generalization of Theorem \ref{trans_inv_thm}.

\begin{theorem}
\label{gen_thm}
Let $f\in L^2_c\paren{ \{a\leq x_n\leq b\} }$, where $0\leq a<b$. Let $r \in C\paren{T_0}$, be of the form
\begin{equation}
r(p,\omega) = (p-w)^{\frac{q}{2}} \cdot \nu(p,\omega),
\end{equation}
where $q \geq 1$, is an integer, $\nu \in C^{\infty}\paren{T_0}$, and $T_0 = \{(p,\omega) : a\leq p\leq b, a\leq \omega \leq p\}$. Further assume that $\nu(p,p) \neq 0$, for $p \in [a,b]$.  Then, \eqref{gen_radon} can be solved uniquely for $f$, if $\mathcal{R}f(p,\vy)$ is known for all $\vy \in \mathbb{R}^{n-1}$, and $p\in[a,b]$. 
\end{theorem}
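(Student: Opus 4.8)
The plan is to mirror the structure of the proof of Theorem \ref{trans_inv_thm}, reducing $\mathcal{R}$ to a generalized Abel equation and then invoking Theorem \ref{anti_volt_thm} (or Corollary \ref{corr2}). First I would take the Fourier transform of \eqref{gen_radon} in the variable $\vy$. Since $f$ enters through the argument $r(p,\omega)\Theta + \vy$, the $\vy$-translation is a convolution, so $\widehat{\mathcal{R}f}(p,\vxi)$ becomes
\begin{equation}
\widehat{\mathcal{R}f}(p,\vxi) = \int_a^p \left[\int_{S^{n-2}} e^{i r(p,\omega)(\Theta\cdot\vxi)}\,\mathrm{d}S^{n-2}\right] \sqrt{1+\paren{\frac{\mathrm{d}r}{\mathrm{d}\omega}}^2}\, r(p,\omega)^{n-2}\, \hat f(\vxi,\omega)\,\mathrm{d}\omega,
\end{equation}
for each fixed $\vxi$. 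The exponential averaged over the sphere reduces, as in the proof of Theorem \ref{trans_inv_thm}, to a smooth even function of $r(p,\omega)$ times $\alpha_n$, giving a kernel factor analogous to $K_1$ there.

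Next I would extract the Abel weight. Substituting $r(p,\omega) = (p-\omega)^{q/2}\nu(p,\omega)$, the factor $r(p,\omega)^{n-2}$ supplies $(p-\omega)^{q(n-2)/2}\nu^{n-2}$, and the first few orders of the sphere integral $K_1$ expand in even powers of $r$, hence in integer powers of $(p-\omega)^{q/2}\cdot(\text{smooth})$. So the total kernel carries a factor $\left[(p-\omega)\right]^{\alpha}$ with $\alpha = q(n-2)/2$ (taking $j=0$, since $T_0$ is the lower-triangular region). I would then write the remaining kernel as a product $K = \prod_i K_i$ in the form demanded by Corollary \ref{corr2}: the smooth pieces ($\nu^{n-2}$, the arclength factor $\sqrt{1+(\mathrm{d}r/\mathrm{d}\omega)^2}$ after clearing its own singularity, and the smooth part of the sphere integral) play the role of $K_2,\ldots,K_l$, while the genuinely singular sphere-integral factor plays the role of $K_1$ whose $G_{1,q}$ must be checked. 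The non-vanishing condition $K(p,p)\neq 0$ follows from $\nu(p,p)\neq 0$ together with $A_{n-2}>0$, exactly as $K(p,p)=2^\alpha A_{n-2}\sqrt{s}\,p^{\alpha+1}$ arose before.

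The main obstacle is the arclength term $\sqrt{1+(\mathrm{d}r/\mathrm{d}\omega)^2}$. Because $\frac{\mathrm{d}r}{\mathrm{d}\omega} = -\tfrac{q}{2}(p-\omega)^{q/2-1}\nu + (p-\omega)^{q/2}\nu_\omega$, this factor is smooth on $T_0$ only when $q\geq 2$; for $q=1$ the term $(p-\omega)^{-1/2}$ inside the square root blows up on the diagonal, so $\sqrt{1+(\mathrm{d}r/\mathrm{d}\omega)^2}$ is not itself continuous there. I would handle this by factoring $\sqrt{1+(\mathrm{d}r/\mathrm{d}\omega)^2} = (p-\omega)^{\gamma}\cdot(\text{continuous, nonvanishing})$ for an appropriate $\gamma$ absorbed into the overall Abel exponent, so that $\alpha$ is adjusted to $\alpha = q(n-2)/2 + \gamma$ and the residual factor is genuinely continuous and satisfies the $G_{1,q}$ continuity conditions via the Fa\`a di Bruno computation already carried out for $J\circ g$ in the proof of Theorem \ref{trans_inv_thm}. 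The remaining verification is routine: confirm $\alpha>-1$, confirm the continuity of the $G_i$ (equivalently $G_{1,q}$) from smoothness of $\nu$ and the established regularity of the sphere-integral kernel, then apply Corollary \ref{corr2} to solve for $\hat f(\vxi,\cdot)\in L^2_c([a,b])$ for each $\vxi$, and finally recover $f$ by inverse Fourier transform in $\vxi$.
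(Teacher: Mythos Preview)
Your proposal is correct and follows essentially the same route as the paper: Fourier transform in $\vy$, extract the Abel weight, split the kernel as a product $K_1K_2K_3$ and apply Corollary~\ref{corr2}, with the Fa\`a di Bruno argument handling the $G_{1,q}$ conditions for the sphere-integral factor $K_1$. The paper carries out your $\gamma$-adjustment for $q=1$ explicitly, taking $\gamma=-\tfrac12$ so that $\alpha=(n-3)/2$, and checks that the residual arclength factor $K_3(p,\omega)=\sqrt{(p-\omega)+\bigl((p-\omega)\nu_\omega-\tfrac12\nu\bigr)^2}$ is in fact $C^\infty$ on $T_0$ (the radicand being smooth and strictly positive there), which is the one detail you leave implicit.
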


\begin{remark}
Note, for bounded $\nu$, we only need the $\vy$ in some compact subset of $\mathbb{R}^{n-1}$, as $f$ is compactly supported. 
\end{remark}

\begin{proof}
Taking the Fourier transform in $\vy$ on both sides of \eqref{gen_radon} yields
\begin{equation}
\label{gen_anti}
\begin{split}
\widehat{\mathcal{R}f}(p,\vxi) &= \int_{a}^p (p-\omega)^{\alpha}\left[\prod_{i=1}^3 K_i(p,\omega)\right] \cdot  \hat{f}(\vxi,\omega)\mathrm{d}\omega \\
& = \int_{a}^p (p-\omega)^{\alpha} K(p,\omega) \cdot  \hat{f}(\vxi,\omega)\mathrm{d}\omega,
\end{split}
\end{equation}
a set of generalized Abel equations. 

Let $\beta = q/2$. We define 
$$
\begin{cases}
   \alpha = (n-3)/2 \geq -1/2, & \text{if $q=1$}.\\
    \alpha = \beta(n-2) \geq 0, & \text{if $q\geq2$}.
  \end{cases}
$$
As $\alpha > -1$, we aim to apply Corollary \ref{corr2} to solve \eqref{gen_anti}. The kernels $K_i$ are defined
\begin{equation}
K_1(p,\omega)=\begin{cases}
    2 \cos \paren{ |\vxi| (p-\omega)^{\beta} \nu(p,\omega) }, & \text{if $n=2$}.\\
    \alpha_n\int_{0}^{\pi} \cos\paren{  |\vxi| (p-\omega)^{\beta} \nu(p,\omega) \cos \varphi}\sin^{n-3}\varphi\mathrm{d}\varphi, & \text{if $n\geq 3$},
  \end{cases}
\end{equation}
where the $\alpha_n$ are as defined in Theorem \ref{trans_inv_thm}, $K_2(p,\omega) = \nu(p,\omega)^{n-2}$, and 
\begin{equation}
K_3(p,\omega)=\begin{cases}
    \paren{ (p-\omega) + \paren{ (p-\omega)\frac{\mathrm{d}\nu}{\mathrm{d}\omega}(p,\omega) - \frac{1}{2} \nu(p,\omega) }^2 }^{\frac{1}{2}}, & \text{if $q=1$}.\\
    \paren{ 1 + (p-\omega)^{q-2}\paren{ (p-\omega)\frac{\mathrm{d}\nu}{\mathrm{d}\omega}(p,\omega) - \beta \nu(p,\omega) }^2 }^{\frac{1}{2}}, & \text{if $q\geq2$}.
  \end{cases}
\end{equation}
Let 
$$g(p,\omega) = (p-\omega) + \paren{ (p-\omega)\frac{\mathrm{d}\nu}{\mathrm{d}\omega}(p,\omega) - \frac{1}{2} \nu(p,\omega) }^2 \in C^{\infty}(T_0).$$
When $p=\omega$, $g(p,p) = \frac{1}{4}\nu(p,p)^2 >0$, for $p \in[a,b]$, by assumption. When $p>\omega$ and $(p,\omega) \in T_0$, then clearly $g(p,\omega)>0$. Thus, $g>0$ everywhere on $T_0$, and must also be bounded away from zero on $T_0$, by smoothness of $g$ and compactness of $T_0$. Therefore, $K_3(p,\omega) = \sqrt{g(p,\omega)} \in C^{\infty}\paren{T_0}$, when $q=1$. If $q\geq 2$, then $K_3$ is clearly smooth on $T_0$. Further, $K_2 \in C^{\infty}\paren{T_0}$, since $\nu \in C^{\infty}\paren{T_0}$, by assumption.

Now, we have
\begin{equation}
K(p,p)=\begin{cases}
   \left|\nu(p,p)\right|, & \text{if $q=1$ and $n=2$}.\\
    \frac{\alpha_n}{2}\paren{\int_0^{\pi}\sin^{n-3}\varphi \mathrm{d}\varphi} \cdot \left|\nu(p,p)\right| \cdot \nu(p,p)^{n-2}, & \text{if $q=1$ and $n\geq 3$}. \\
  2\mu(p), & \text{if $q\geq 2$ and $n=2$}.\\
\alpha_n\paren{\int_0^{\pi}\sin^{n-3}\varphi \mathrm{d}\varphi} \cdot \mu(p)  \cdot \nu(p,p)^{n-2}, & \text{if $q\geq 2$ and $n\geq 3$},
  \end{cases}
\end{equation}
where 
\begin{equation}
\mu(p) =\begin{cases}
    \sqrt{1+\paren{\beta\nu(p,p)}^2}, & \text{if $q=2$}.\\
    1 , & \text{if $q\geq 3$}.
  \end{cases}
\end{equation}
Therefore, $K(p,p) \neq 0$, for $p\in [a,b]$, by assumption that $\nu(p,p)\neq 0$, for $p \in [a,b]$. It remains to be shown that $K_1$ satisfies the conditions of Corollary \ref{corr2}. 

Let $g_1(p,\omega) = |\vxi| (p-\omega)^{\beta} \nu(p,\omega) \in C(T_0)$. Let $n=2$. Then, we have
\begin{equation}
\frac{\mathrm{d}K_1}{\mathrm{d}p}(p,\omega) = 2 g_2(p,\omega) \sinc\paren{ g_1(p,\omega)} \in C(T_0),
\end{equation}
where
\begin{equation}
\begin{split}
g_2(p,\omega)  &= - g_1(p,\omega) \cdot \frac{\mathrm{d}g_1}{\mathrm{d} p}(p,\omega)\\
&= - |\vxi|^2(p-\omega)^{2\beta - 1}\nu(p,\omega) \left[ \beta \nu(p,\omega) + (p-\omega) \frac{\mathrm{d}\nu}{\mathrm{d} p}(p,\omega) \right],
\end{split}
\end{equation}
which is smooth on $T_0$. We can now apply Theorem \ref{weak_volt_thm} to prove the result in the $n=2$ case.

Let  $n\geq 3$. Then,
\begin{equation}
\frac{\mathrm{d}K_1}{\mathrm{d}p}(p,\omega) = \alpha_n\cdot g_2(p,\omega) \int_{0}^{\pi}\sinc\paren{ g_1(p,\omega) \cos\varphi} \sigma(\varphi) \mathrm{d}\varphi \in C(T_0),
\end{equation}
where $\sigma(\varphi) = \cos^2(\varphi)\sin^{n-3}(\varphi)$. Let $J \in C^{\infty}(\mathbb{R})$ be defined
$$J(u) = \int_{0}^{\pi}\sinc\paren{ u \cos\varphi} \sigma(\varphi) \mathrm{d}\varphi.$$
To finish the proof, we aim to show 
\begin{equation}
\label{hk}
h_k(p,\omega) = (p-\omega)^k \frac{\mathrm{d}^k}{\mathrm{d}p^k}J \paren{ g_1(p,\omega) }\in C(T_0),
\end{equation}
for any $k \geq 1$. When $q$ is even, \eqref{hk} clearly holds. We now focus on the case when $q$ is odd. Using Theorem \ref{di_bruno_thm} and Theorem \ref{general_leibniz}, we have, for $k\geq1$,
\begin{equation}
\label{cont}
\begin{split}
h_k(p,\omega) &= (p-\omega)^k \cdot \sum_{\gamma\in\Pi}J^{(|\gamma|)}\paren{ g_1(p,\omega) }\cdot \prod_{\tau \in\gamma}g^{(|\tau|)}_1(p,\omega) \\
& = \sum_{\gamma\in\Pi}J^{(|\gamma|)}\paren{ g_1(p,\omega) }\cdot (p-\omega)^k \prod_{\tau \in\gamma}\left[\sum_{i=0}^{|\tau|}c_{i,|\tau|}(p-\omega)^{\beta-(|\tau|-i)}\frac{\mathrm{d}^i\nu}{\mathrm{d}p^i}(p,\omega) \right] \\
& = \sum_{\gamma\in\Pi}J^{(|\gamma|)}\paren{ g_1(p,\omega) }\cdot \prod_{\tau \in\gamma}(p-\omega)^{|\tau|}\left[\sum_{i=0}^{|\tau|}c_{i,|\tau|}(p-\omega)^{\beta-(|\tau|-i)}\frac{\mathrm{d}^i\nu}{\mathrm{d}p^i}(p,\omega) \right] \\
& =  \sum_{\gamma\in\Pi}J^{(|\gamma|)}\paren{g_1(p,\omega) }\cdot \prod_{\tau \in\gamma}\left[\sum_{i=0}^{|\tau|}c_{i,|\tau|}(p-\omega)^{(\beta+i)}\frac{\mathrm{d}^i\nu}{\mathrm{d}p^i}(p,\omega) \right] ,
\end{split}
\end{equation}
where $\Pi$ is the set of partitions of $\{1,\ldots,k\}$, $J^{(|\gamma|)}$ denotes derivative $|\gamma|$ of $J$, $g_1^{(|\tau|)}$ denotes derivative $|\tau|$ of $g_1$, $c_{|\tau|,|\tau|} = |\vxi|$, and
\begin{equation}
c_{i,|\tau|}=|\vxi|\cdot \begin{pmatrix} |\tau| \\ i \end{pmatrix} \paren{\prod_{l=0}^{|\tau|-1-i}(\beta - l)},
\end{equation}
for $0\leq i \leq |\tau| - 1$. On the third line of \eqref{cont}, we use the fact that 
$$k = \sum_ {\tau\in\gamma} |\tau| \implies (p-\omega)^k = \prod_{\tau\in\gamma}(p-\omega)^{|\tau|}.$$
In the exponent in the sum on the fourth line of \eqref{cont}, $\beta+i \geq 1/2,$
since $i\geq 0$, and $\beta \geq 1/2$. Thus, $h_k \in C(T_0)$, as desired, for all $k\geq 1$. This finishes the proof.
\end{proof}

If we set $q=1$, and $\nu(p,\omega) = \sqrt{s} \sqrt{p+\omega}$, then $\mathcal{R}f = \mathcal{R}_0f$, and thus Theorem \ref{gen_thm} is a direct generalization of the results of Theorem \ref{trans_inv_thm}. In figure \ref{fig_surfaces}, we give some additional examples of curves and surfaces in 2-D and 3-D which satisfy the conditions of Theorem \ref{gen_thm}.
\begin{figure}[!h]
\centering
\begin{subfigure}{0.24\textwidth}
\includegraphics[width=0.9\linewidth, height=3.2cm, keepaspectratio]{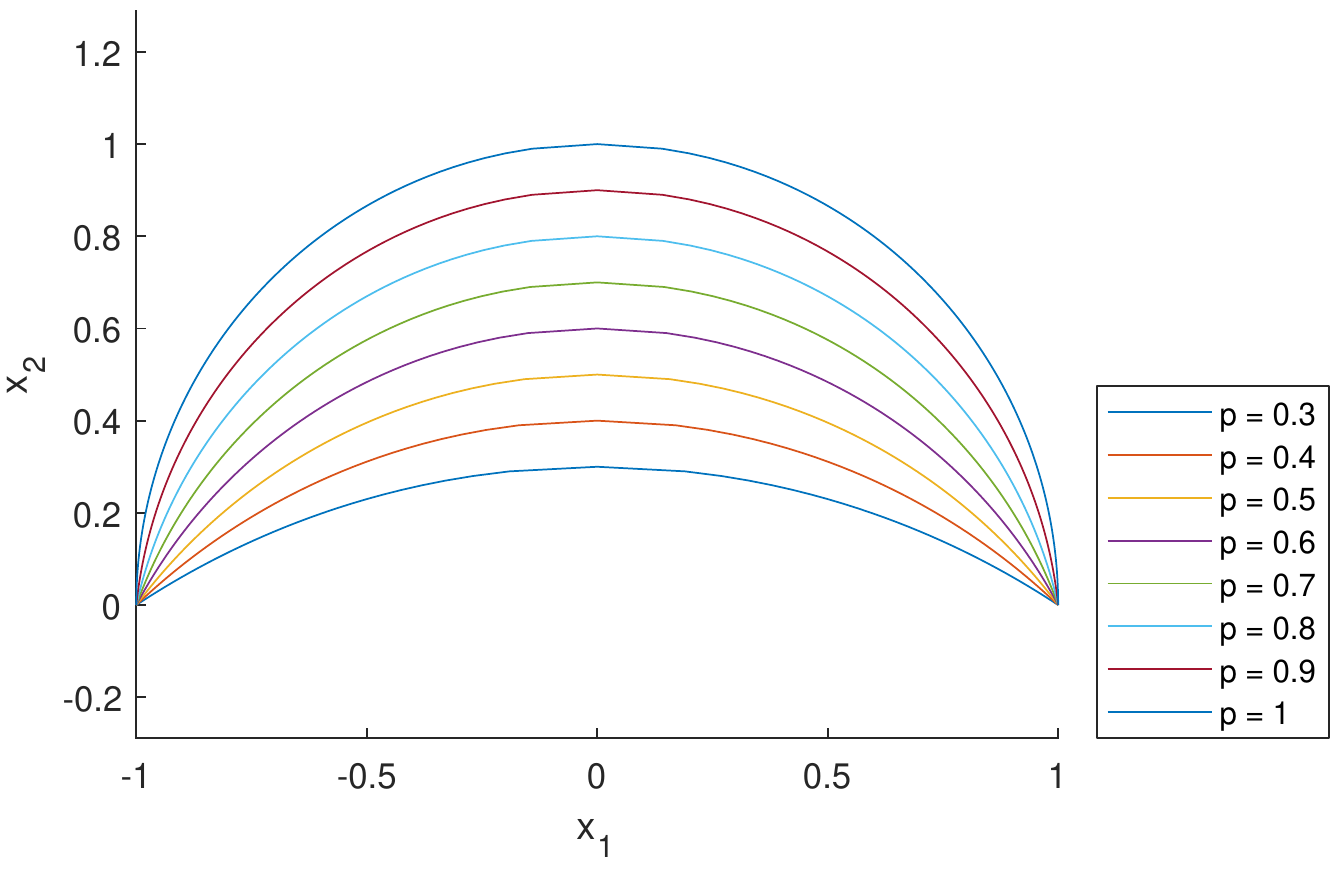}
\subcaption{\scalebox{0.65}{  $r = \sqrt{p-\omega}\sqrt{\frac{p\omega+1}{p}}$ }} \label{s1F}
\end{subfigure}
\begin{subfigure}{0.24\textwidth}
\includegraphics[width=0.9\linewidth, height=3.2cm, keepaspectratio]{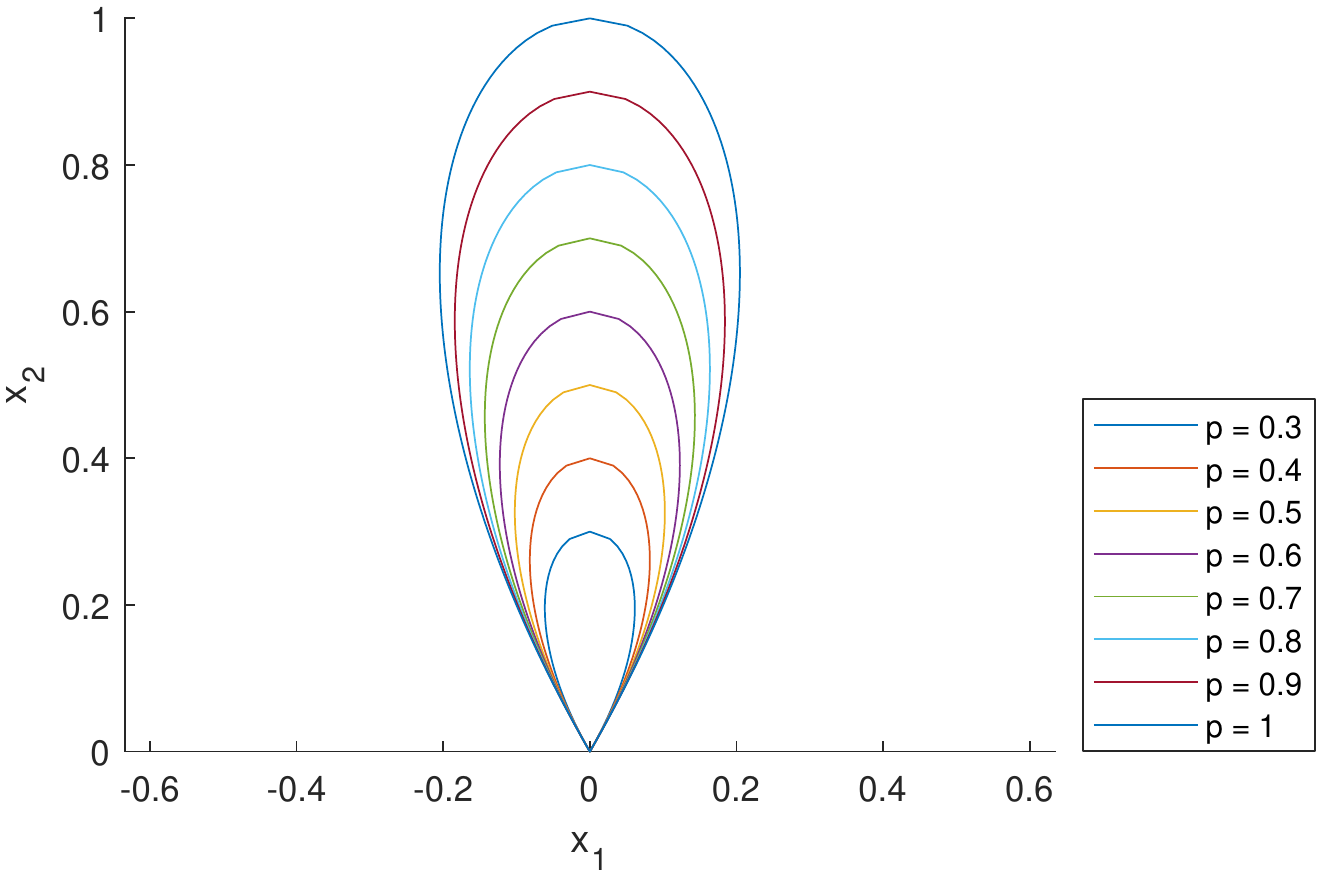}
\subcaption{\scalebox{0.6}{ $r = \sqrt{\frac{3}{4}}\sqrt{p^2-\omega^2} - \sqrt{p-\omega}\sqrt{\frac{3}{4}p-\frac{1}{4}\omega}$ }} \label{tear12D}
\end{subfigure}
\begin{subfigure}{0.24\textwidth}
\includegraphics[width=0.9\linewidth, height=3.2cm, keepaspectratio]{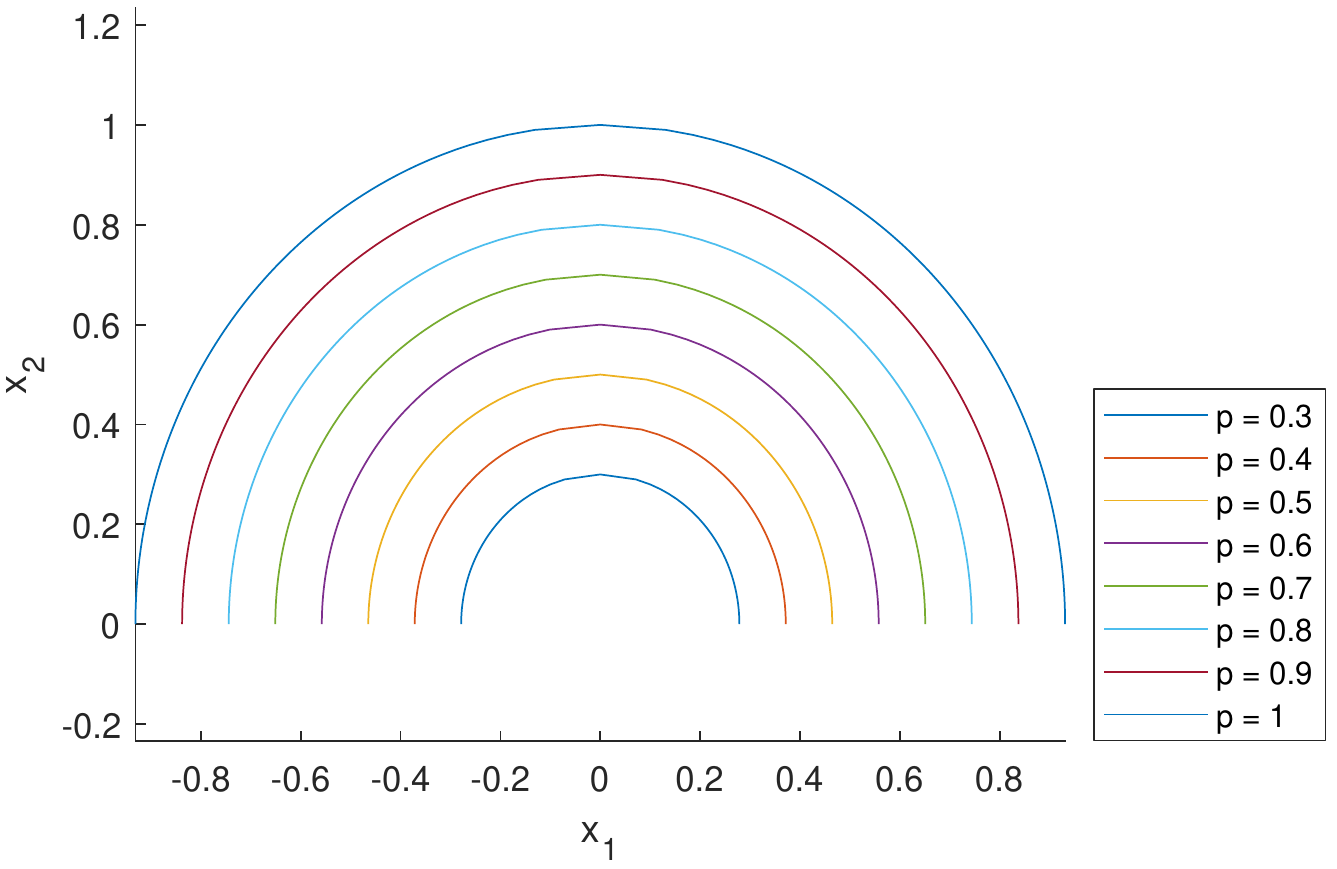}
\subcaption{ \scalebox{0.65}{ $r = \sqrt{p^2-\omega^2}\sqrt{\frac{p^2+h^2}{p^2+h^2+d^2}}$ } } \label{gauss2D}
\end{subfigure}
\begin{subfigure}{0.24\textwidth}
\includegraphics[width=0.9\linewidth, height=3.2cm, keepaspectratio]{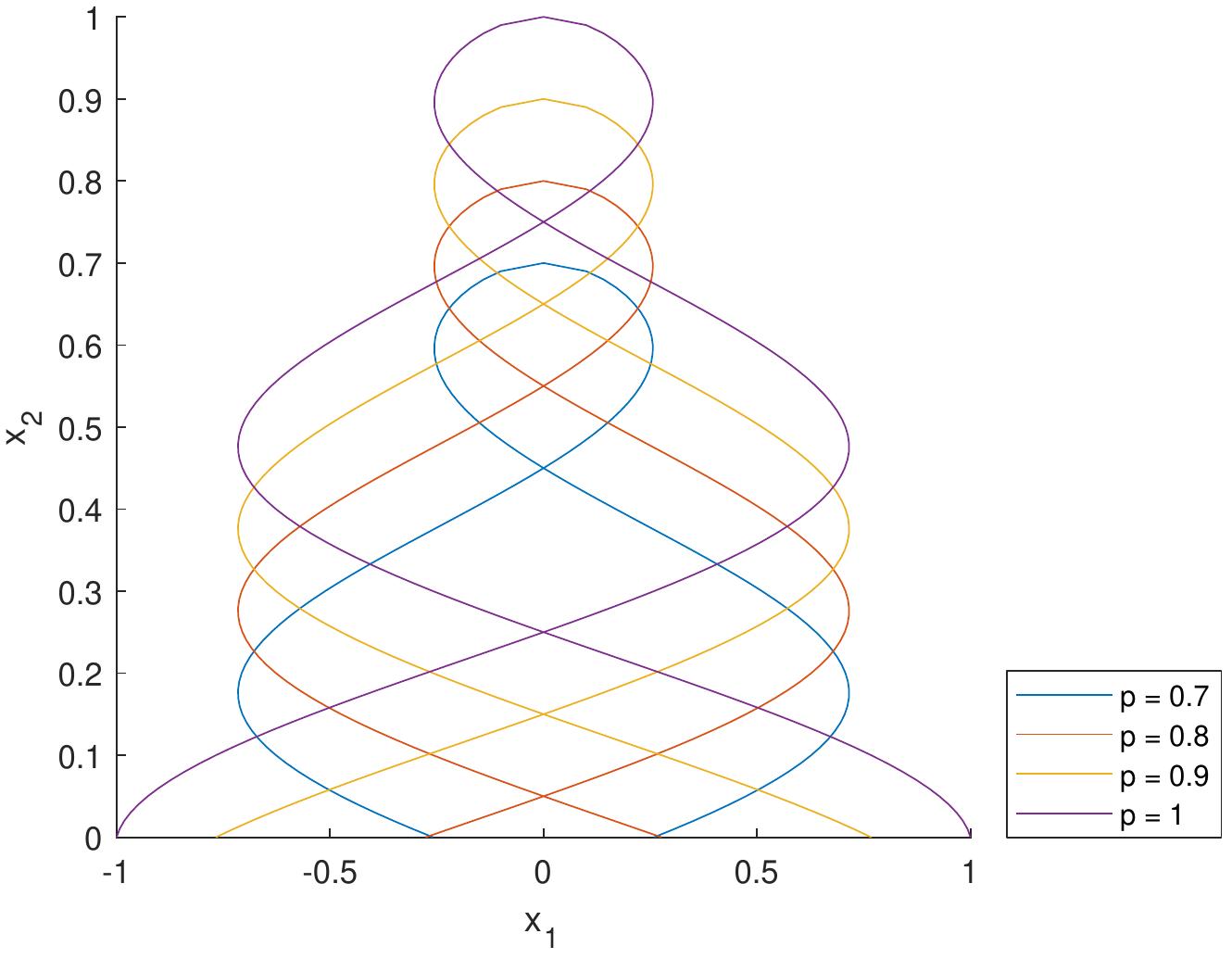}
\subcaption{ \scalebox{0.65}{ $r = \sqrt{p-\omega}\cos\paren{2\pi(p-\omega)}$ } } \label{cos2D}
\end{subfigure}
\begin{subfigure}{0.24\textwidth}
\includegraphics[width=0.9\linewidth, height=3.2cm, keepaspectratio]{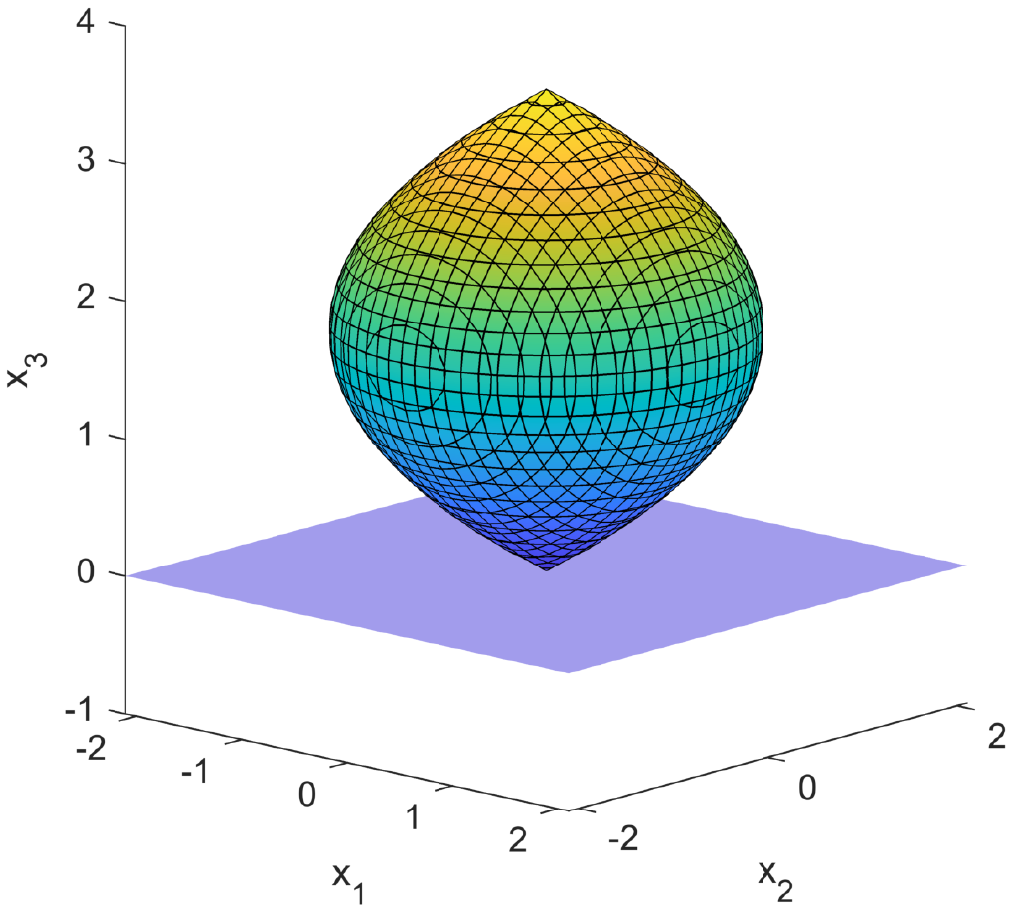}
\subcaption{ \scalebox{0.65}{ $r = p\omega (p-\omega)$ } }
\end{subfigure}
\begin{subfigure}{0.24\textwidth}
\includegraphics[width=0.9\linewidth, height=3.2cm, keepaspectratio]{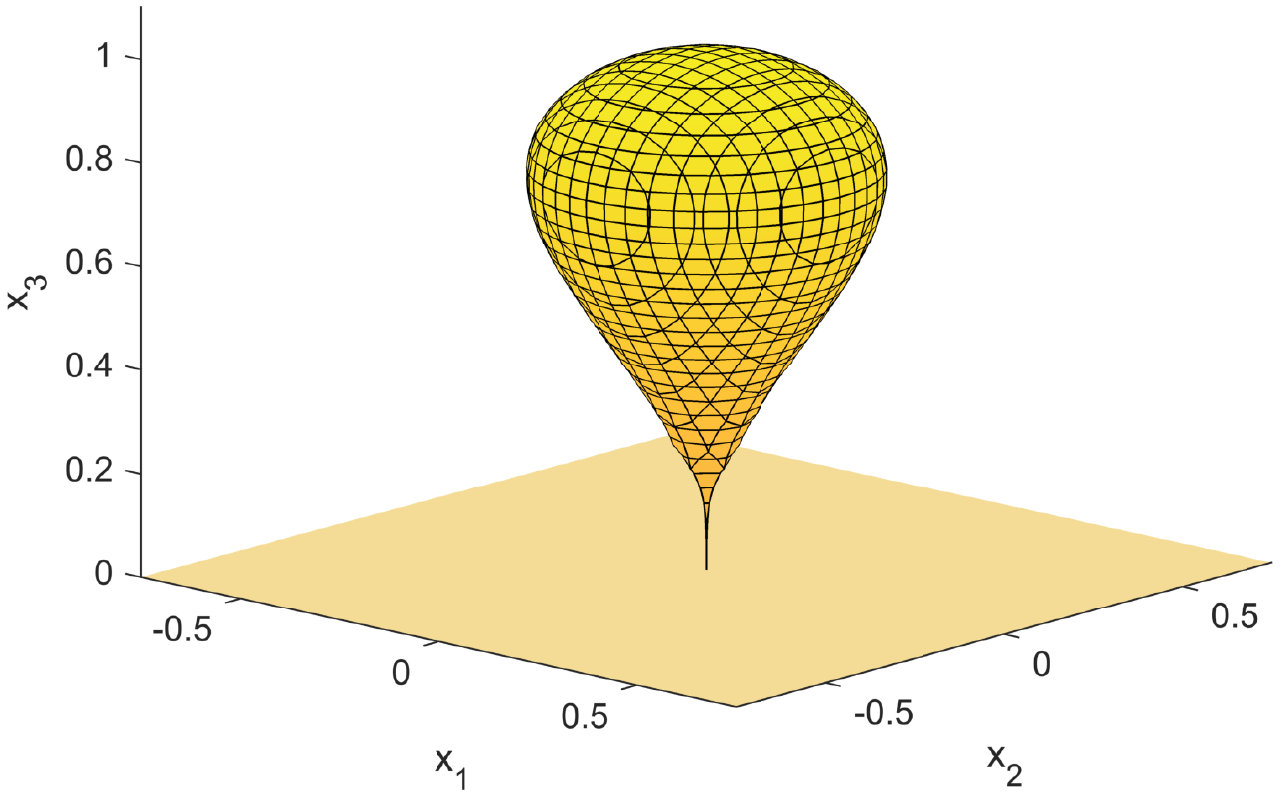}
\subcaption{ \scalebox{0.65}{ $r = \omega \sinc(p-\omega) \sqrt{p-\omega}$ } } \label{tear3D}
\end{subfigure}
\begin{subfigure}{0.24\textwidth}
\includegraphics[width=0.9\linewidth, height=3.2cm, keepaspectratio]{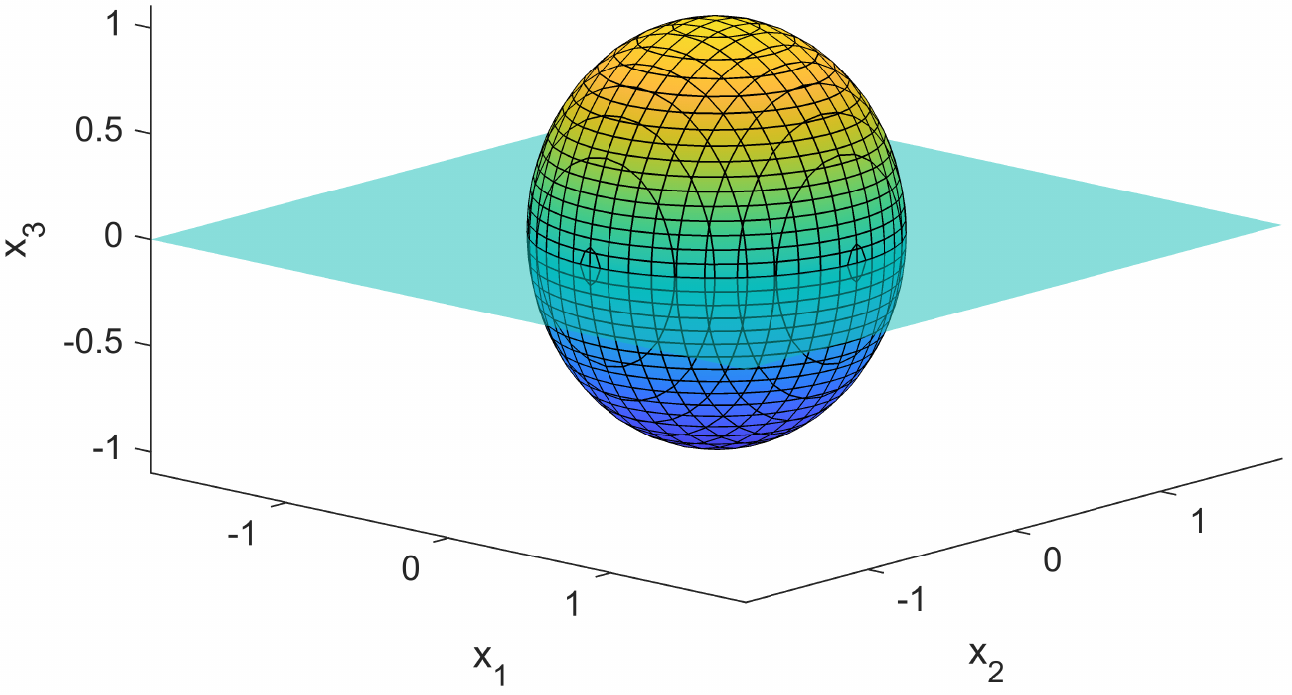}
\subcaption{ \scalebox{0.65}{ $r = \sqrt{\frac{p^2-c^2}{p^2}} \cdot \sqrt{p^2-\omega^2}$ }} \label{gauss3D}
\end{subfigure}
\begin{subfigure}{0.24\textwidth}
\includegraphics[width=0.9\linewidth, height=3.2cm, keepaspectratio]{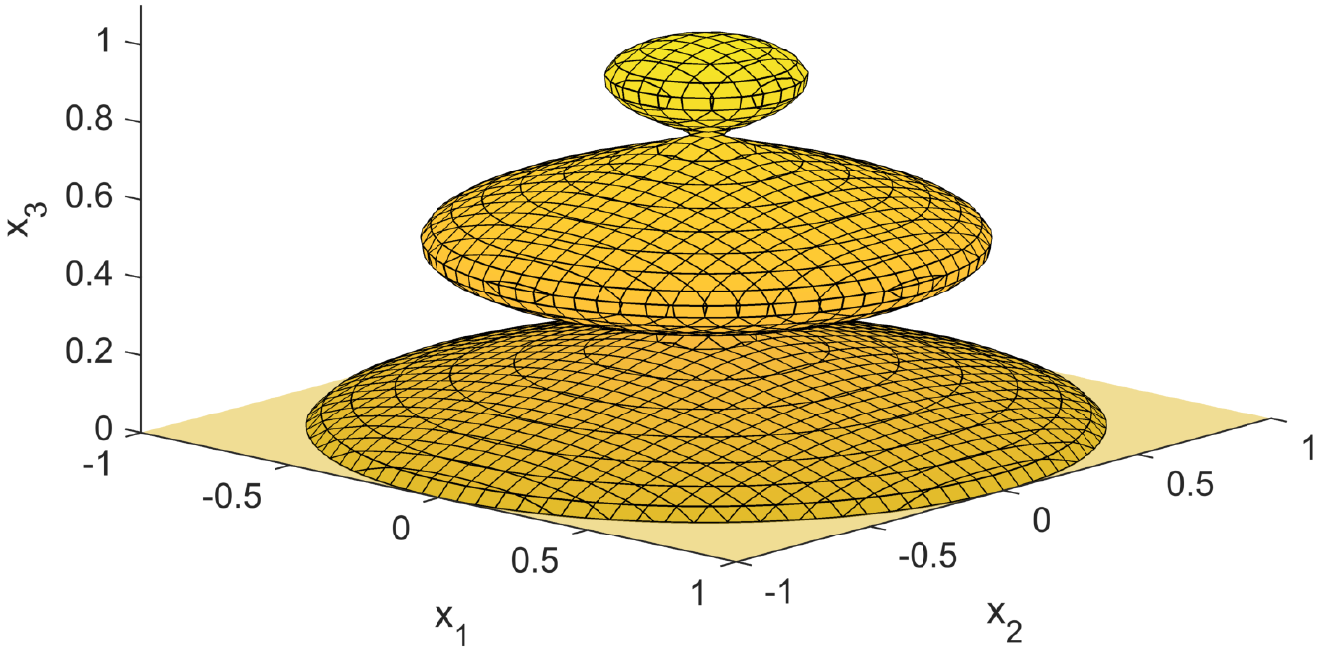}
\subcaption{\scalebox{0.65}{ $r = \sqrt{p-\omega} \cos\paren{2\pi(p-\omega)}$ }} \label{h13}
\end{subfigure}
\caption{Example curves and surfaces which satisfy the conditions of Theorem \ref{gen_thm}. In each case, the expression for $r = r(p,\omega)$ is given in the subfigure caption. In figures (A), (B), (C), (E), and (F), $a$ has to be stricly greater than zero in order for Theorem \ref{gen_thm} to hold. In figure (G), $a \geq c$, where $c = 0.5$ is the fixed linear eccentricity of the spheroid, with foci $(0,0,\pm c)$. In the remaining figures, $a\geq 0$. In (C), we plot the elliptic curves of \cite[equation (4)]{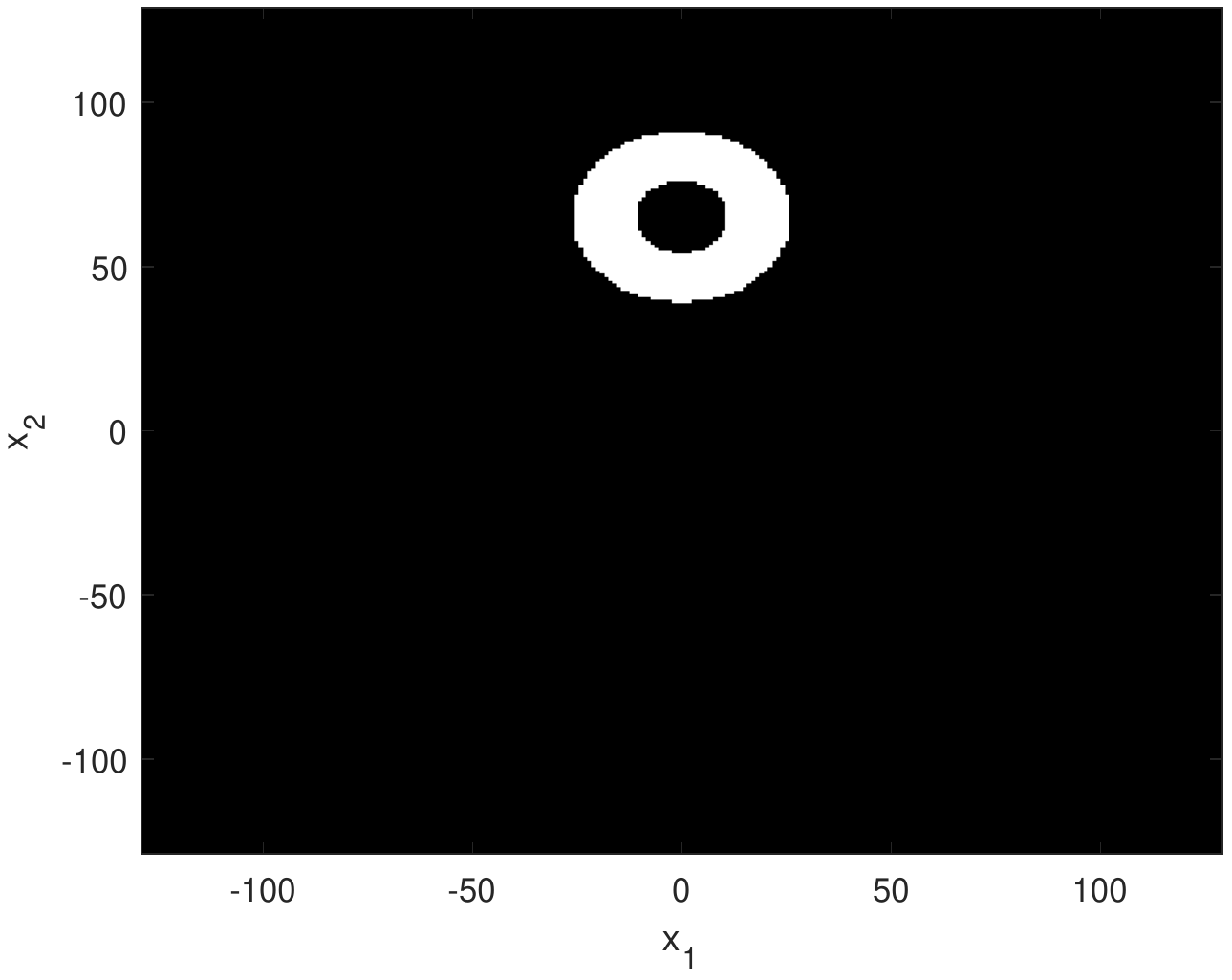}.}
\label{fig_surfaces}
\end{figure}

The circular arc curves of figure \ref{s1F} are of particular interest in CST \cite{p35}. The elliptic arc curves of figure \ref{gauss2D} have applications in multistatic Synthetic Aperture Radar (SAR) \cite{p1}. The variables $h$ and $d$ in the figure \ref{gauss2D} caption correspond to height and base distance from the origin of transmitter and receiver platforms in multistatic SAR. See \cite[figure 1]{p1}. We set $h=5$, and $d=2$, as in the examples of \cite[figure 3]{p1}. The authors in \cite{p1} present an approximate inversion method for a Radon transform, $R_x$, which integrates $f$ over ellipses as in figure \ref{gauss2D} with centers on a line, but injectivity of $R_x$ is not proven. Theorem \eqref{gen_thm} proves injectivity for $R_x$, for $f \in L^2_c\paren{ \{a\leq x_2 \leq b\} }$, $0<a<b$, after setting $q=1$, and $\nu(p,\omega) = \sqrt{p+\omega}\sqrt{\frac{p^2+h^2}{p^2+h^2+d^2}}$ in Theorem \ref{gen_thm}. 

When $r = \sqrt{\frac{p^2-c^2}{p^2}} \cdot \sqrt{p^2-\omega^2}$, as in figure \ref{gauss3D}, the integral surfaces are spheroids with foci on $\{x_n = \pm c\}$. In this case, we set $q=1$, and $\nu(p,\omega) = \sqrt{p+\omega}\sqrt{\frac{p^2-c^2}{p^2}}$ in Theorem \ref{gen_thm}. Then,  $f \in L^2_c\paren{ \{a\leq x_2 \leq b\} }$, with $c<a<b$, can be recovered uniquely if $\mathcal{R}f(p,\vy)$ for $p \in [a,b]$ and $\vy \in \mathbb{R}^{n-1}$ is known. This example has applications in URT. For example, we could place a sound wave emitter and receiver at $(0,0\pm c)$, in $n=3$ dimensions, and image $f \in L^2_c\paren{ \{a\leq x_2 \leq b\} }$. In this case, translating $f$ parallel to the $(x_1,x_2)$ plane and varying the major spheroid axis, $p>c$, determines $\mathcal{R}f(p,\vy)$. In the context of URT, the time delay between sending and receiving signals determines $p$. 

We can also recover $f$ uniquely from its integrals over teardrop curves and surfaces as in figures \ref{tear12D}, and \ref{tear3D}, and from integrals over oscillating curves as in figure \ref{cos2D}. 

We have discussed in this section several key applications of Theorem \ref{anti_volt_thm} to translation invariant Radon transforms, and discussed practical applications in the context of URT, SAR, and CST. In appendix \ref{appA}, we consider additional applications of Theorem \ref{anti_volt_thm} to rotation invariant Radon transforms.

\section{Image reconstructions}
\label{results}
In this section, we present image reconstructions from $\mathcal{E}_jf$ data in $n=2$ dimensions, where $f$ is a simulated image phantom. To reconstruct $f$, we use algebraic methods. Specifically, let $A_j$ denote the discretized form of $\mathcal{E}_j$, let $\vu$ be the discretized and vectorized form of $f$ on a uniform grid, and let $\vb$ denote the measured data. Then, we aim to find
\begin{equation}
\label{obj}
\argmin_{\vx \in \mathcal{X}} \|A\vx - \vb\|_2^2 + \lambda G(\vx),
\end{equation}
where $G$ is a regularization function (e.g., Total Variation (TV) or Tikhonov), and $\mathcal{X}$ is our solution space (e.g., non-negative functions).

\subsection{Data simulation}
\label{data_sim}
We discretize $f$ on an $m\times m$ uniform grid, with $m$ odd,
$$\left\{(x_1,x_2) : x_1 = -m/2 + \frac{m}{m-1}(i-1), x_2 = -m/2 + \frac{m}{m-1}(j-1), 1\leq i,j \leq m-1\right\},$$
and $\mathcal{E}_jf(t,y_1)$ is sampled for $y_1 \in [-m,m]$ with $2m$ steps at even intervals, and for $t = sp^2$, where $p \in \{1,\ldots \frac{m+1}{2} \}$, and $s = 2$ is fixed. We choose $s=2$ so the ellipse foci lie on $\{x_2=0\}$, and this is motivated by applications in URT. For example, the ellipse foci could represent sound wave emitters and receivers.

We simulate data with noise in the following way
\begin{equation}
\vb = A_{\epsilon}\vx + \gamma \times \frac{\|A_{\epsilon}\vx\|_2}{\sqrt{l}} \mathbf{\eta},
\end{equation}
where $l$ is the length of $A_{\epsilon}\vx$, $\mathbf{\eta} \sim \mathcal{N}(0,1)$ is a set of draws from a standard Gaussian, and $\gamma$ is a parameter which controls the noise level. $A_{\epsilon}$ is a perturbed $A$, which is used to avoid inverse crime. Specifically, $A_{\epsilon}$ is generated by multiplying every non-zero element of $A$ by $u \sim U(1-\epsilon,1+\epsilon)$, where $U(1-\epsilon,1+\epsilon)$ is the uniform distribution on $[1-\epsilon,1+\epsilon]$. Thus, we perturb the weights of $A$ uniformly and at random, and, on top of that, we add additional Gaussian noise to the perturbed data.

Throughout this section, we fix $m = 257$, $\epsilon = 0.05$, and we vary $\gamma \in [0.01,0.05]$, i.e., between $1 \%$ and $5 \%$ added Gaussian noise. Note, the $(p,y_1)$ sampling described above is sufficient to satisfy the conditions of Theorem \ref{trans_inv_thm}, and we can recover $f$, with compact support on $[-m/2,m/2]\times [0,m/2]$, uniquely from $\mathcal{E}_jf$.

\subsection{Image phantoms}
We consider the image phantoms displayed in figure \ref{F_phan}, one of which is a characteristic function on an annulus, and the other is the sum of 20 characteristic functions on ellipse interiors. The centers and radii of the ellipses are selected at random. Both phantoms are supported in the upper half space $\{x_2 > 0\}$, in line with Theorem \ref{trans_inv_thm}. 
\begin{figure}[!h]
\centering
\begin{subfigure}{0.24\textwidth}
\includegraphics[width=0.9\linewidth, height=3.2cm, keepaspectratio]{p1}
\subcaption{annulus phantom} \label{F7a}
\end{subfigure}
\begin{subfigure}{0.24\textwidth}
\includegraphics[width=0.9\linewidth, height=3.2cm, keepaspectratio]{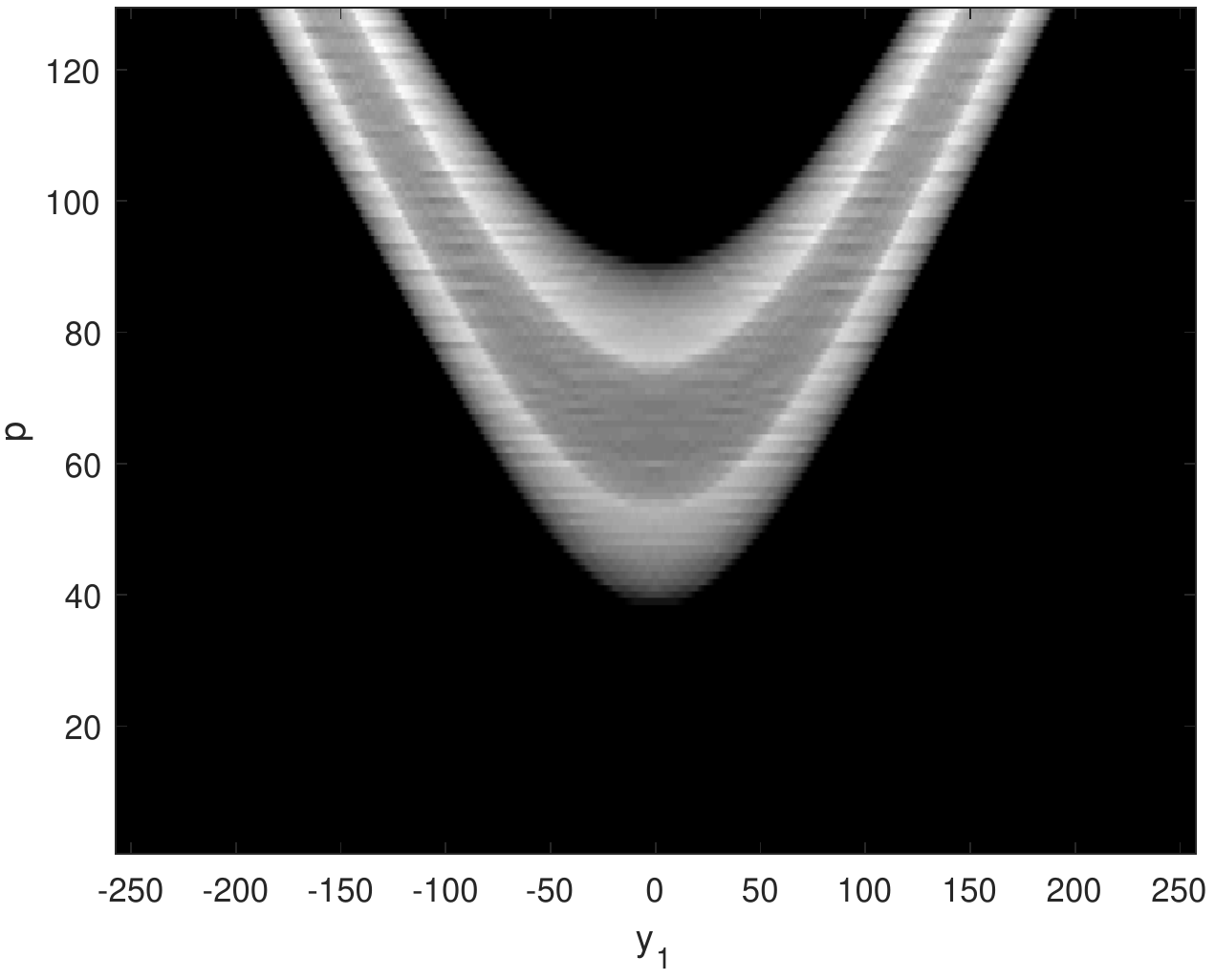} 
\subcaption{$\mathcal{E}_0 f$} \label{F7b}
\end{subfigure}
\begin{subfigure}{0.24\textwidth}
\includegraphics[width=0.9\linewidth, height=3.2cm, keepaspectratio]{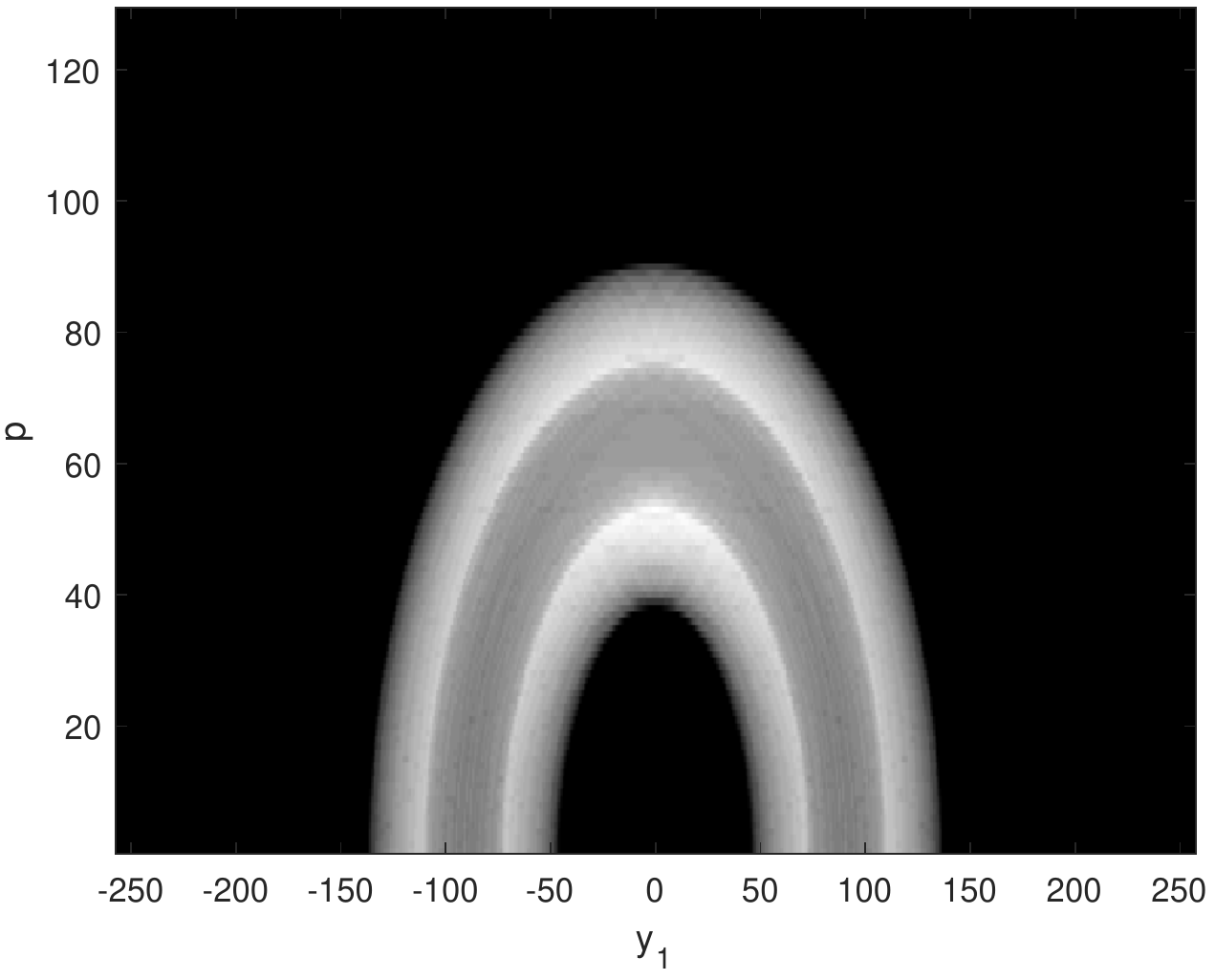}
\subcaption{$\mathcal{E}_1 f$} \label{F7c}
\end{subfigure}
\\
\begin{subfigure}{0.24\textwidth}
\includegraphics[width=0.9\linewidth, height=3.2cm, keepaspectratio]{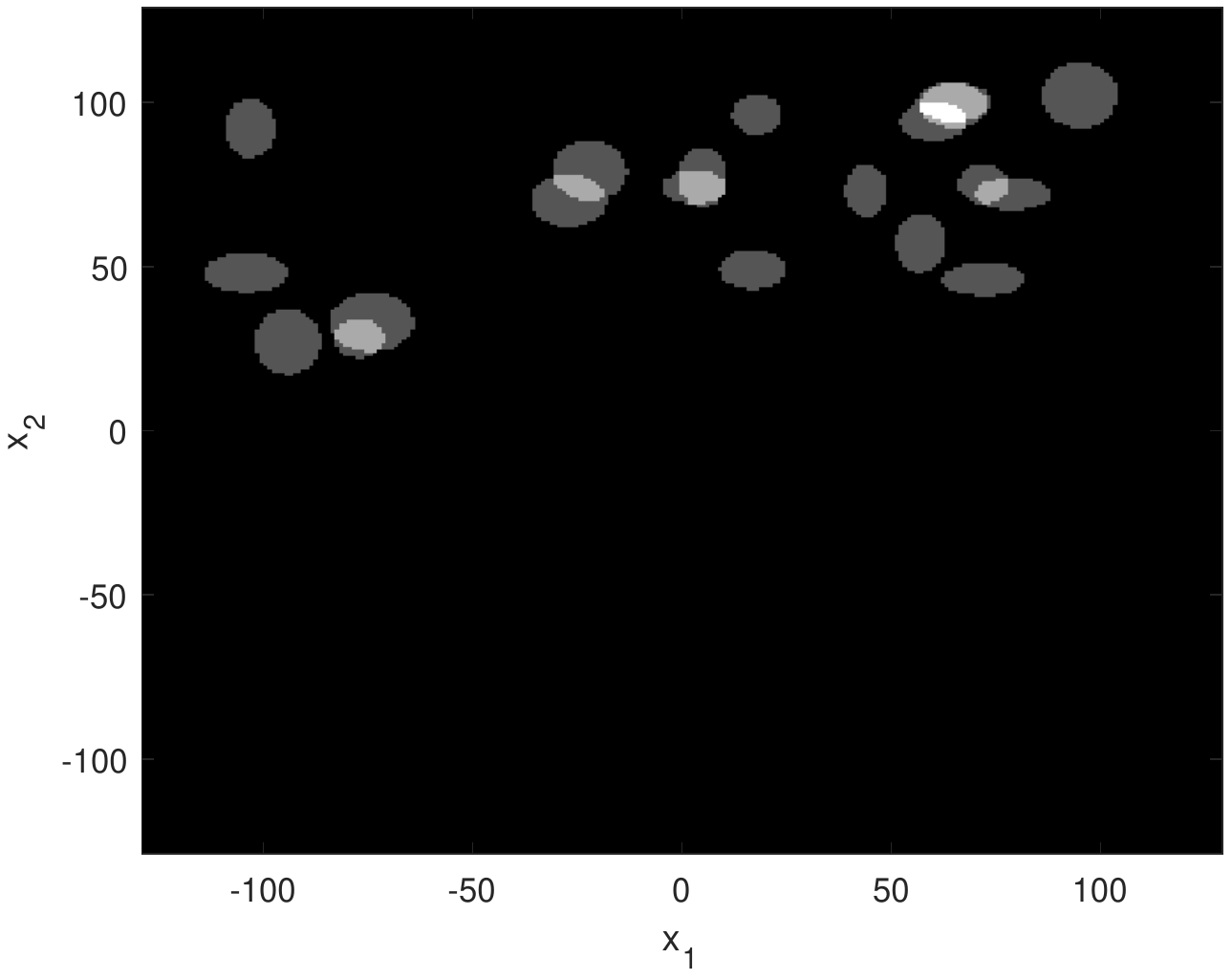}
\subcaption{ellipse phantom} \label{F7d}
\end{subfigure}
\begin{subfigure}{0.24\textwidth}
\includegraphics[width=0.9\linewidth, height=3.2cm, keepaspectratio]{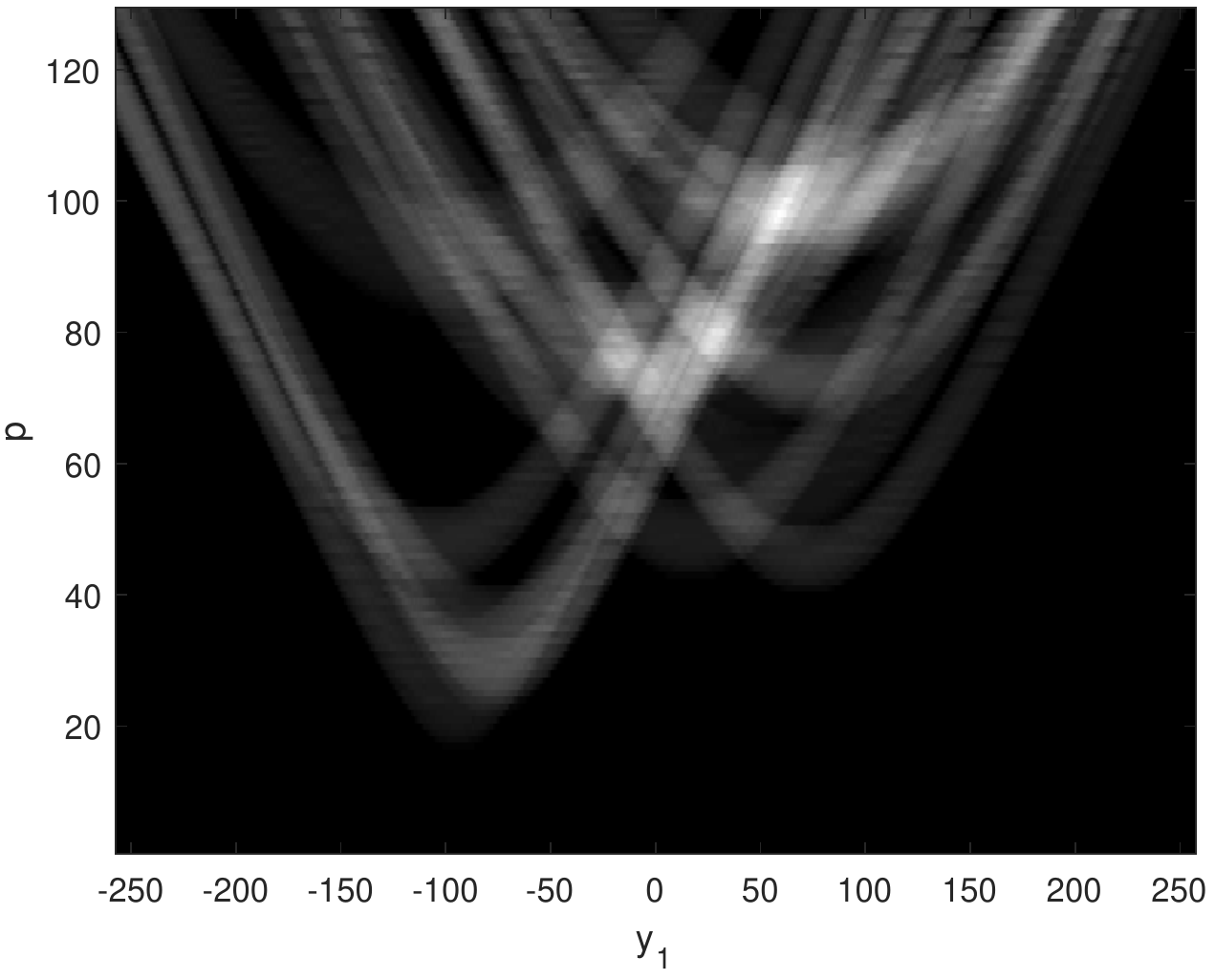} 
\subcaption{$\mathcal{E}_0 f$} \label{F7e}
\end{subfigure}
\begin{subfigure}{0.24\textwidth}
\includegraphics[width=0.9\linewidth, height=3.2cm, keepaspectratio]{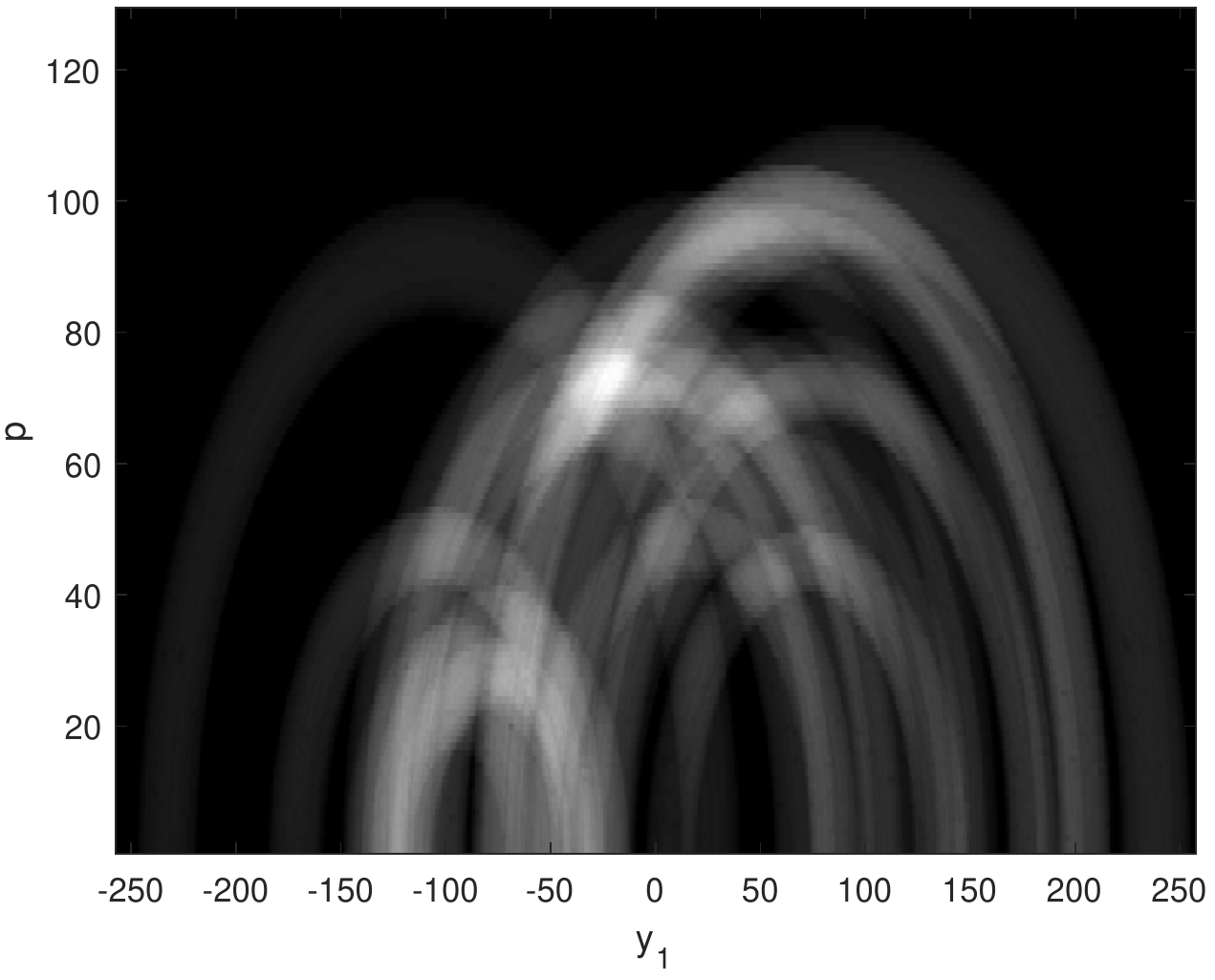}
\subcaption{$\mathcal{E}_1 f$} \label{F7f}
\end{subfigure}
\caption{Image phantoms and $\mathcal{E}_j$ sinograms.}
\label{F_phan}
\end{figure}

As a quantitative measure of image reconstruction performance, we use the least squares error
$$\delta = \frac{\|\vx_{\epsilon} - \vx\|_2}{\|\vx\|_2},$$
where $\vx$ is the true value, and $\vx_{\epsilon}$ is a reconstruction. Before calculating $\delta$, $\vx$ and $\vx_{\epsilon}$ are normalized to have max value 1, and $\delta$ is calculated on $\{x_2 > 0\}$.

\subsection{Reconstruction methods}
\label{recon_methods}
In this section, we discuss our methods. We consider the imaging methods listed below:
\begin{enumerate}
\item Conjugate Gradient Least Squares (CGLS) with Tikhonov regularization \cite{p32} - In this case, $G(\vx) = \|\vx\|_2^2$, $\mathcal{X} = \mathbb{R}^{m^2}$, and \eqref{obj} is a quadratic objective. 
\item Total Variation (TV) \cite{p33} - We employ the regularization function 
$$G(\vx) = \sqrt{\|\nabla \vx\|^2_2+\beta^2},$$
where $\beta>0$ is an additional smoothing parameter, which is included so the gradient is defined at zero and techniques from convex optimization may be applied. We restrict the solution space $\mathcal{X} = \mathbb{R}_+^{m^2}$ to vectors with non-negative entries, as, in many imaging applications, we are imaging physical quantities (e.g., densities), which are non-negative. 
The objective \eqref{obj}, although convex, is non-linear, and we solve \eqref{obj} using gradient based solvers and the code provided in \cite{p33}.
\end{enumerate}
CGLS with Tikhonov regularization, while fast and efficient, offers modest regularization, and is included in our simulations mainly to highlight some of the artifacts we expect to see if the solution is not sufficiently regularized. TV is included to show the effectiveness of a stronger regularizer in removing the artifacts.

\subsection{Results}
In this subsection, we present our results. See figure \ref{Fr1}, where we present reconstructions of the annulus and ellipse phantom using CGLS and Tikhonov regularization. In table \ref{Tr1}, we give the least squares error corresponding to the reconstructions in figure \ref{Fr1}.
\begin{figure}[!h]
\centering
\begin{subfigure}{0.24\textwidth}
\includegraphics[width=0.9\linewidth, height=3.2cm, keepaspectratio]{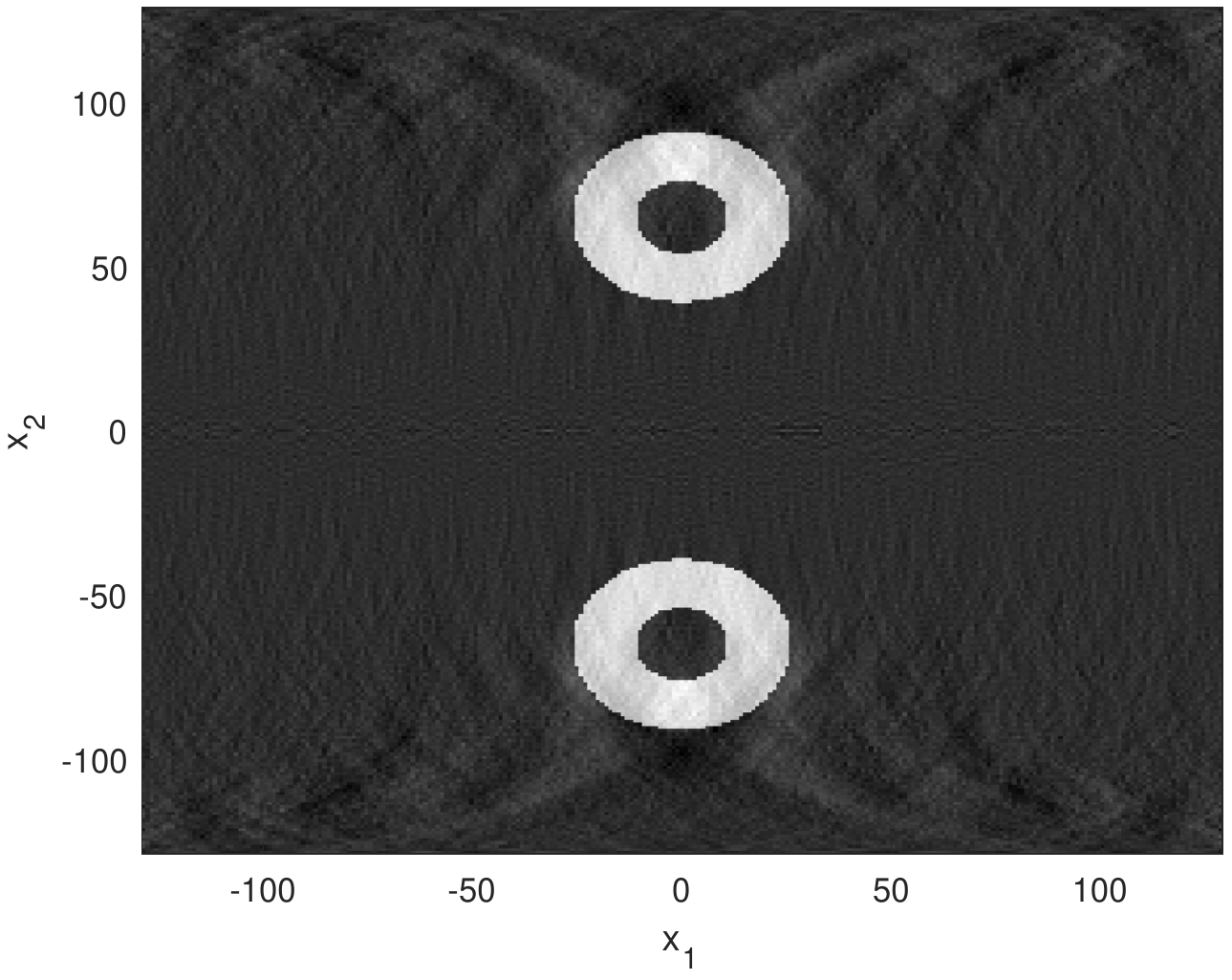}
\end{subfigure}
\begin{subfigure}{0.24\textwidth}
\includegraphics[width=0.9\linewidth, height=3.2cm, keepaspectratio]{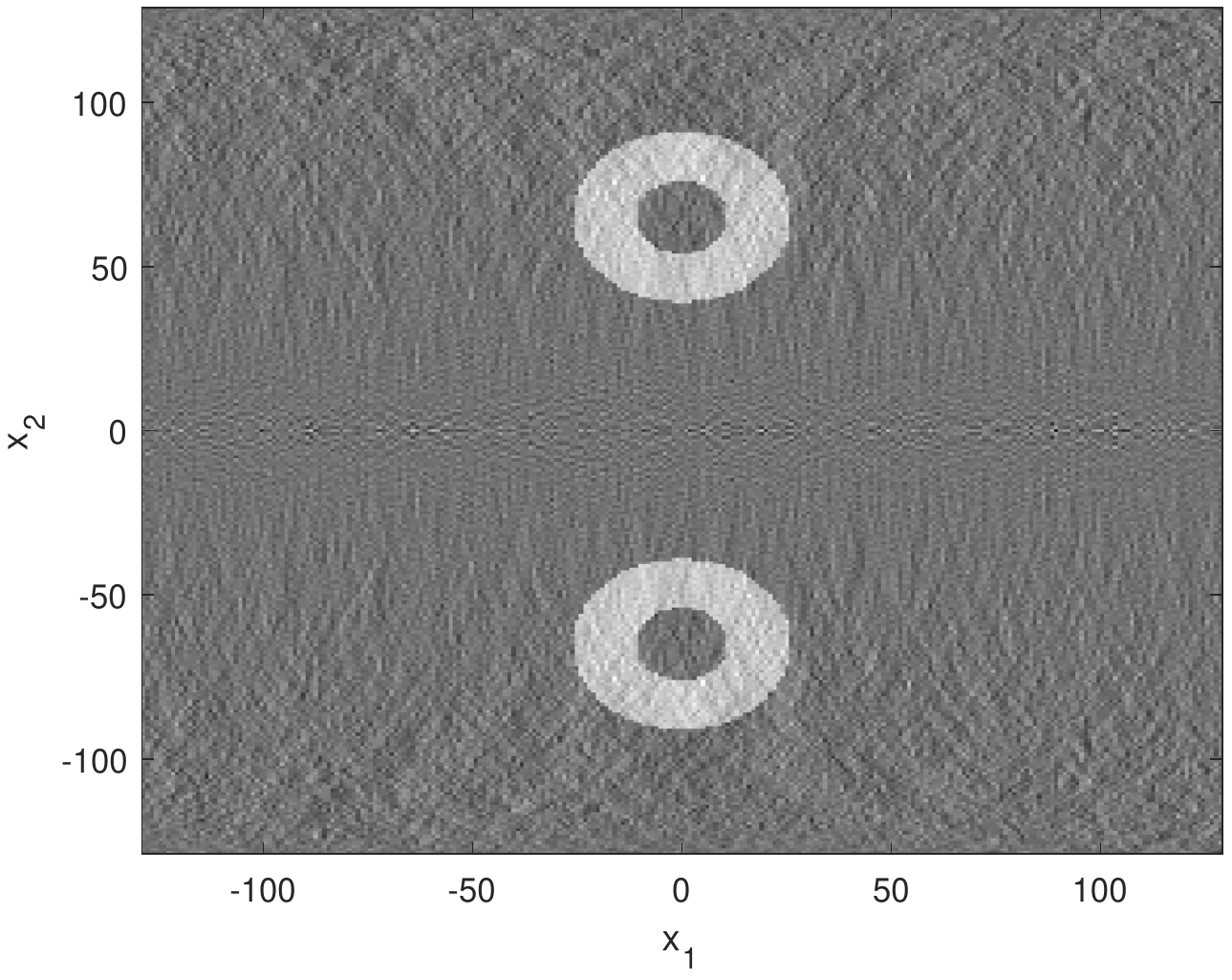}
\end{subfigure}
\begin{subfigure}{0.24\textwidth}
\includegraphics[width=0.9\linewidth, height=3.2cm, keepaspectratio]{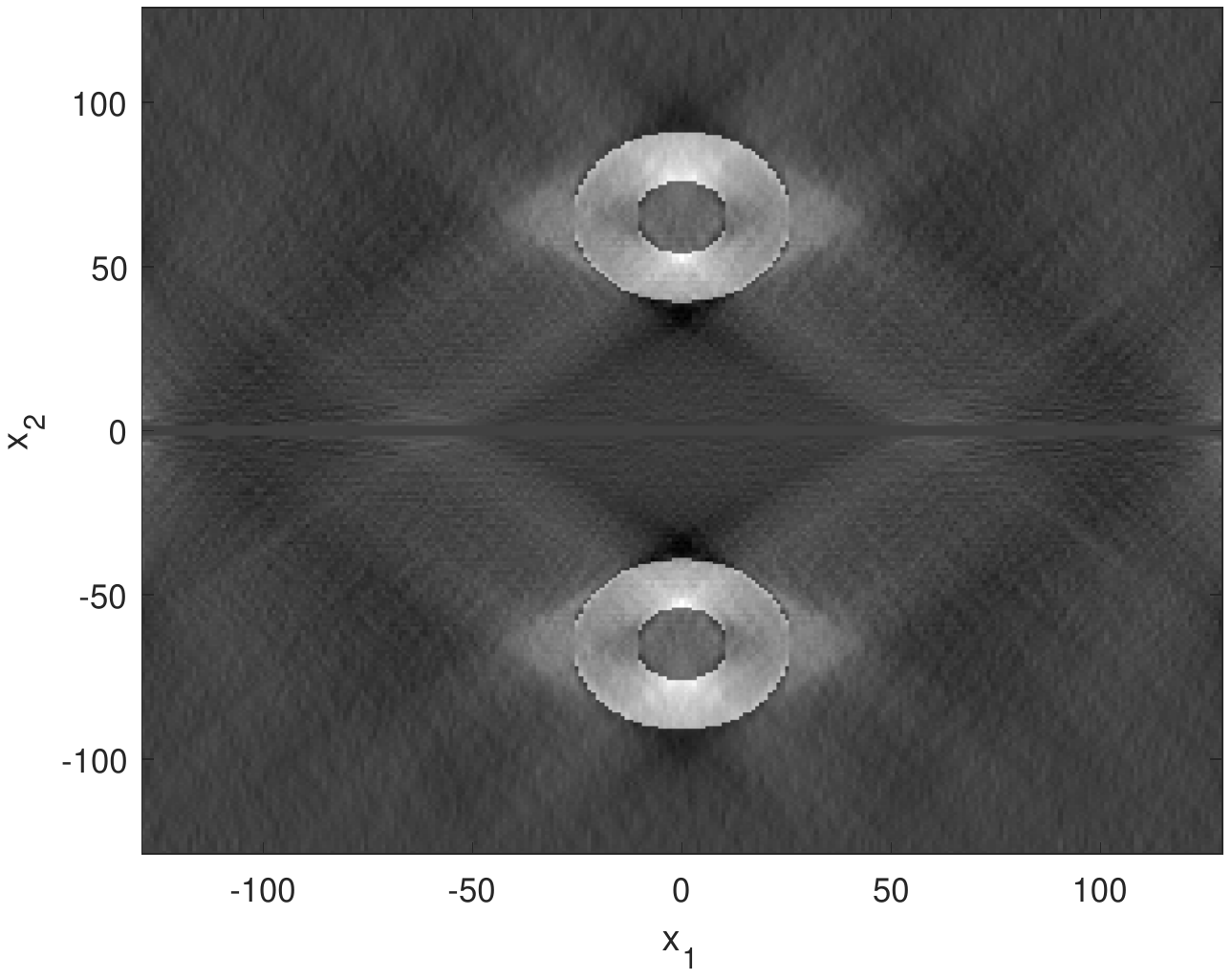}
\end{subfigure}
\begin{subfigure}{0.24\textwidth}
\includegraphics[width=0.9\linewidth, height=3.2cm, keepaspectratio]{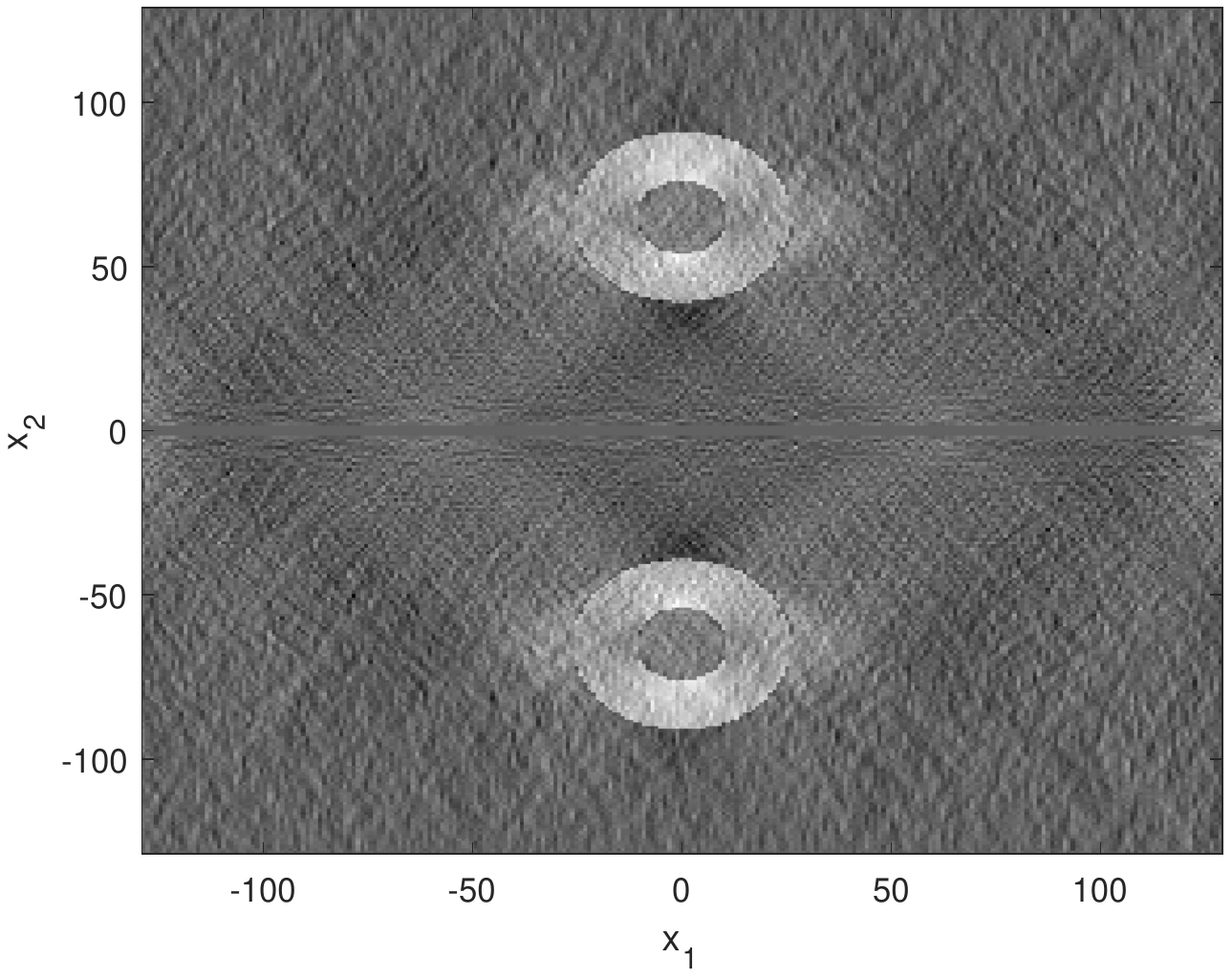}
\end{subfigure}
\begin{subfigure}{0.24\textwidth}
\includegraphics[width=0.9\linewidth, height=3.2cm, keepaspectratio]{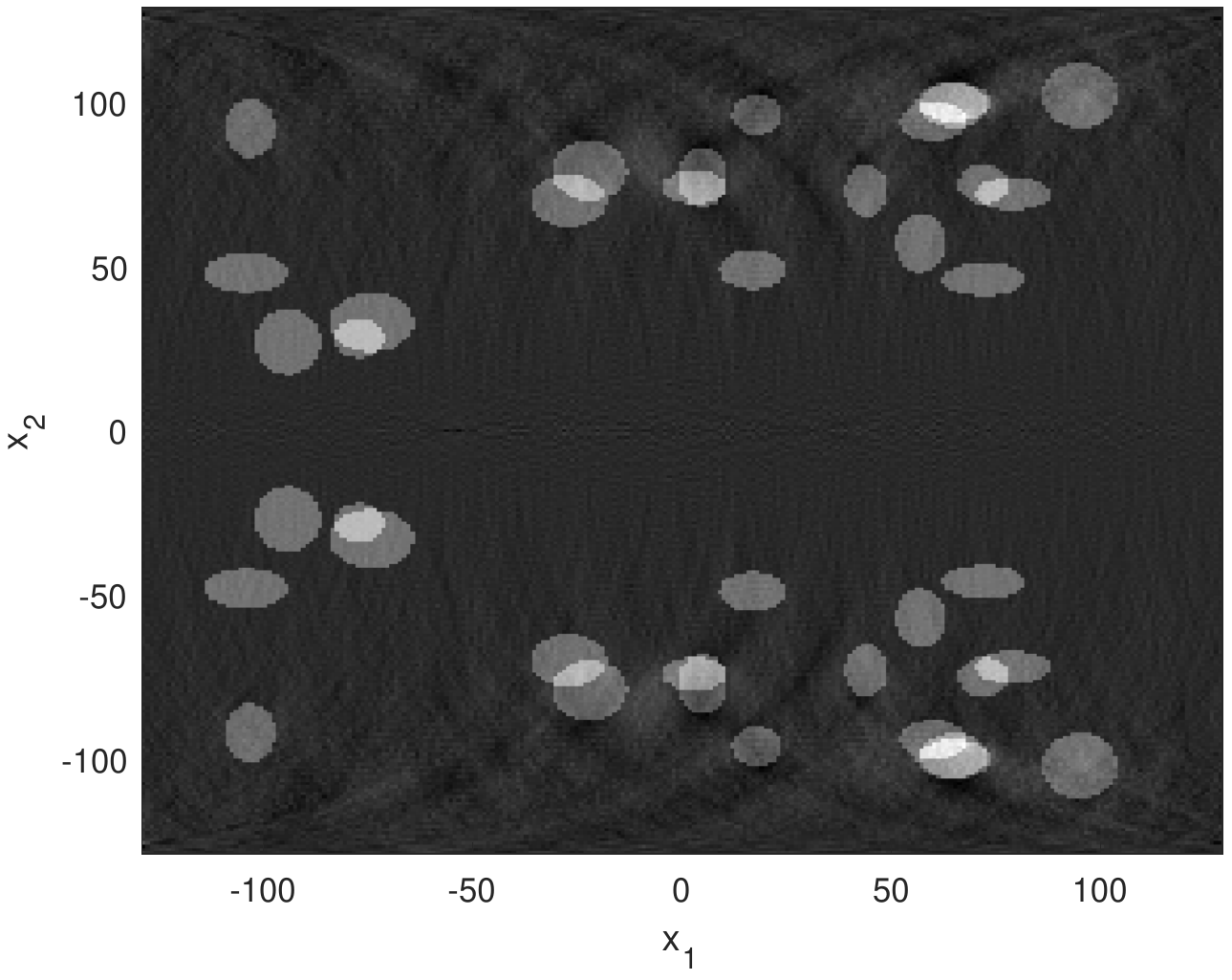}
\subcaption*{$\mathcal{E}_0f$ recon, $1\%$ noise}
\end{subfigure}
\begin{subfigure}{0.24\textwidth}
\includegraphics[width=0.9\linewidth, height=3.2cm, keepaspectratio]{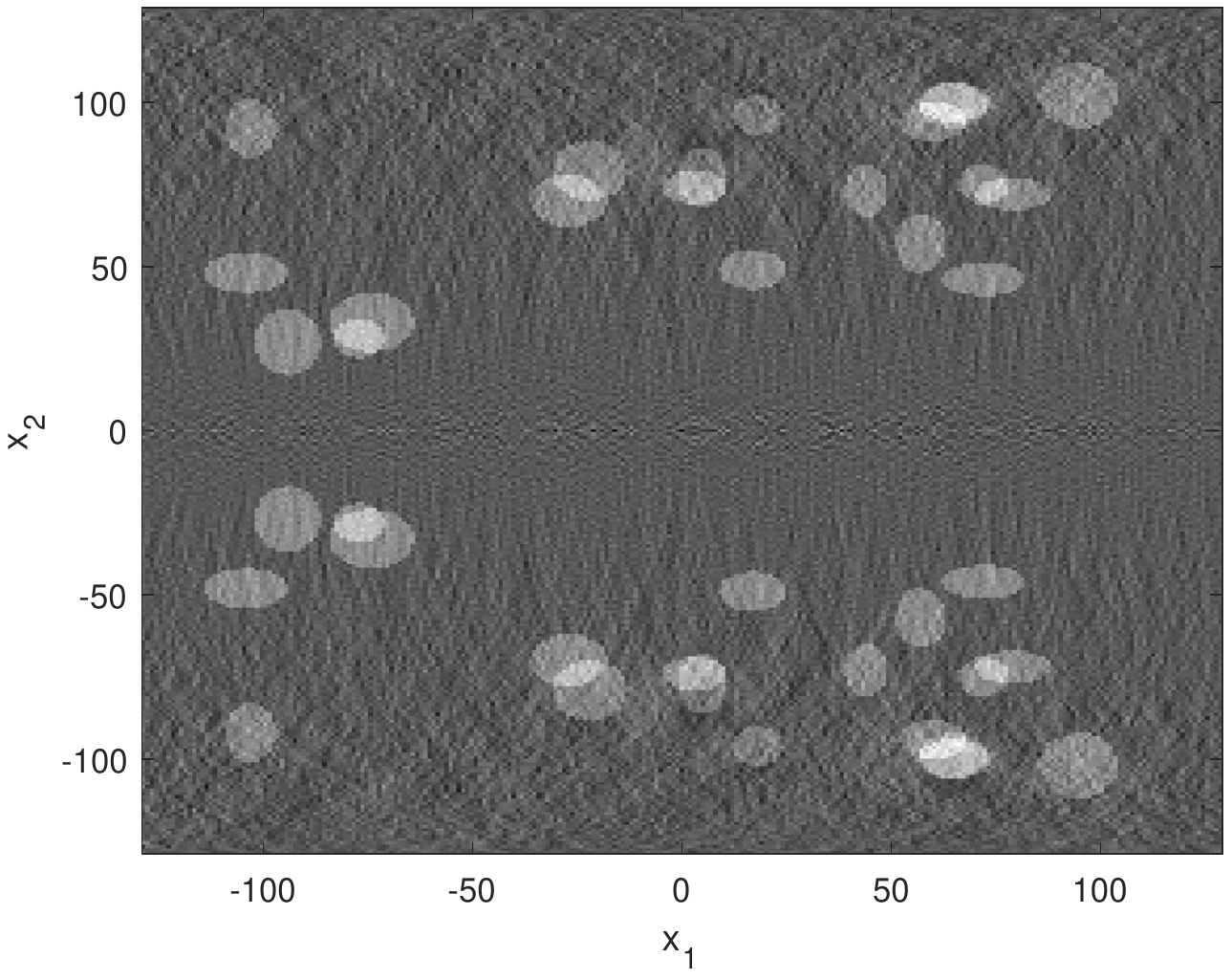}
\subcaption*{$\mathcal{E}_0f$ recon, $5\%$ noise}
\end{subfigure}
\begin{subfigure}{0.24\textwidth}
\includegraphics[width=0.9\linewidth, height=3.2cm, keepaspectratio]{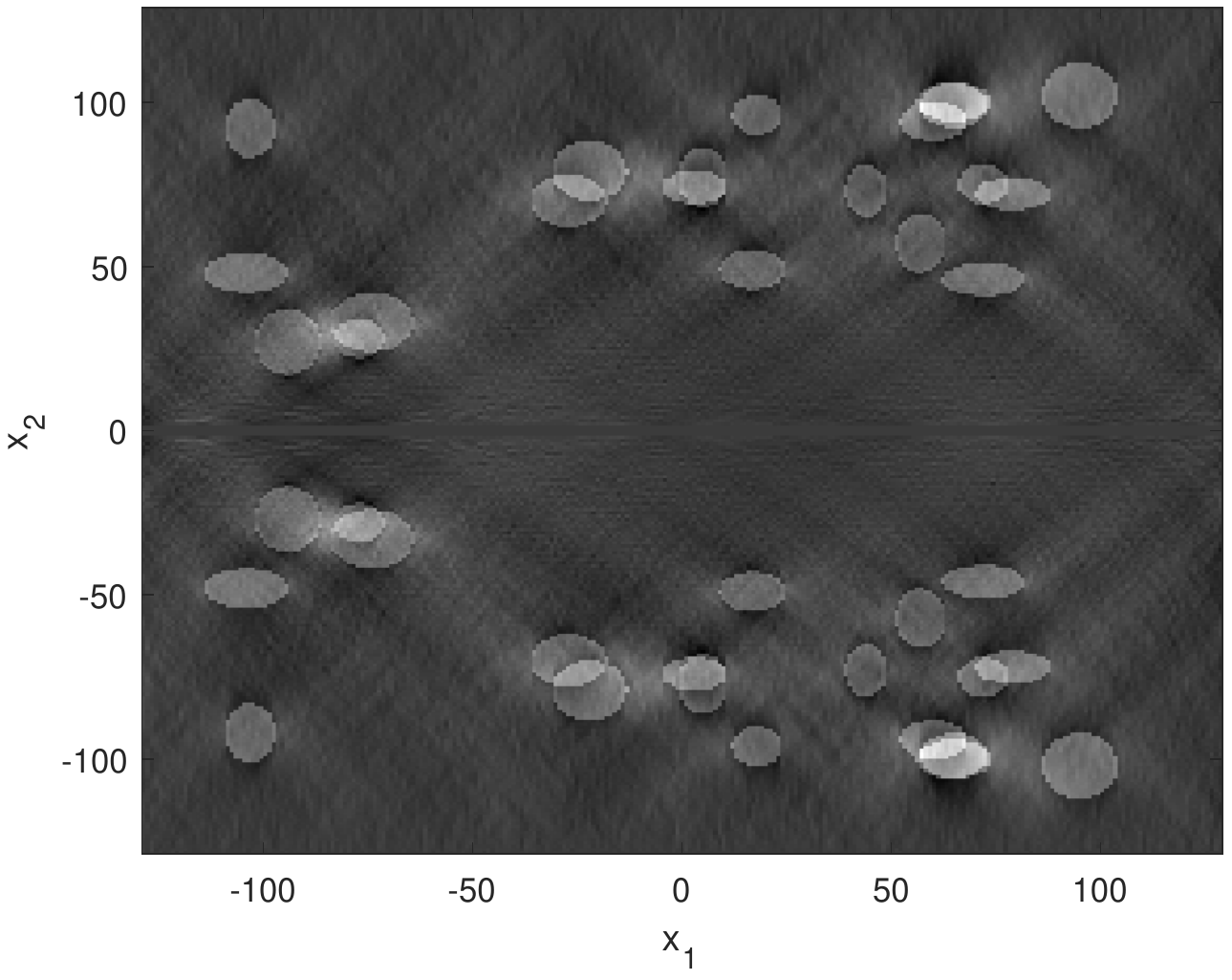}
\subcaption*{$\mathcal{E}_1f$ recon, $1\%$ noise}
\end{subfigure}
\begin{subfigure}{0.24\textwidth}
\includegraphics[width=0.9\linewidth, height=3.2cm, keepaspectratio]{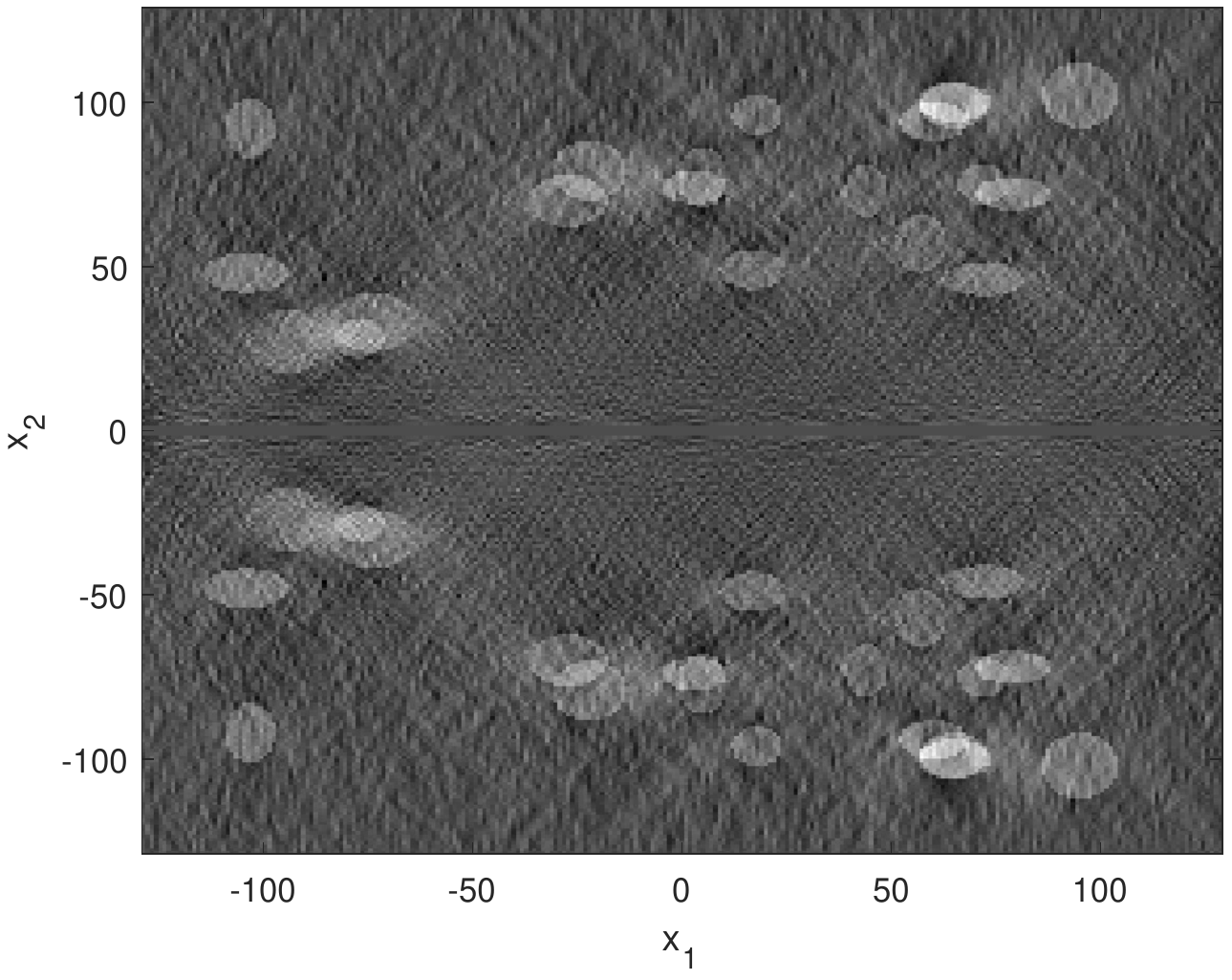}
\subcaption*{$\mathcal{E}_1f$ recon, $5\%$ noise} \label{h13}
\end{subfigure}
\caption{CGLS reconstructions from ellipse and hyperbola integrals, with varying levels of noise. The annulus phantom reconstructions are on the top row, and the ellipse phantom reconstructions are on the bottom row.}
\label{Fr1}
\end{figure}
The regularization parameter, $\lambda$, was chosen experimentally to give the best results in terms of least squares error. With low level ($1\%$) noise, we see artifacts in the reconstructions from $\mathcal{E}_j f$ data. We see a blurring effect near jump singularities with direction parallel to $x_1$, and this effect is more pronounced in the $\mathcal{E}_1f$ reconstructions. This is because $\mathcal{E}_j$ cannot detect singularities in directions parallel to $x_1$, and with limited sinogram data, as considered here (i.e., the ellipse radii are bounded by $(m+1)/2$), edges with directions a small angle from $x_1$ cannot be resolved. This creates a blurring ``cone" effect, which is particularly highlighted in the $\mathcal{E}_1f$ reconstructions. See \cite{p13} for more discussion on the visible and invisible singularities in $\mathcal{E}_j f$ data. Overall, the $\mathcal{E}_0f$ data reconstructions are higher quality than the $\mathcal{E}_1f$ data reconstructions, both in terms of image quality and least squares error. The sampling of $(t,y_1)$, discussed in subsection \ref{data_sim}, was chosen to satisfy the conditions of Theorem \ref{trans_inv_thm}, so that $f$ can be recovered uniquely from $\mathcal{E}_jf$. With this sampling scheme, the specific shape of the ellipses allows them to detect more singularities of $f$, when compared to hyperbola, which helps to explain the higher inversion instability of $\mathcal{E}_1$, when compared to $\mathcal{E}_0$, in this example. 

The CGLS algorithm is not robust to noise, and the artifacts in both phantoms are significantly amplified with increased noise, e.g., as we move from $1\%$ to $5\%$ added noise. This is also reflected in the least squares error results of table \ref{Tr1}.
\begin{table}[!ht]
    \centering
    \begin{tabular}{|l|l|l|l|l|}
    \hline
        phantom & $\mathcal{E}_0f$, $1\%$ noise & $\mathcal{E}_0f$, $5\%$ noise & $\mathcal{E}_1f$, $1\%$ noise & $\mathcal{E}_0f$, $5\%$ noise \\ \hline
        annulus & 0.27 & 0.64 & 0.57 & 0.75 \\ \hline
        ellipses & 0.27 & 0.73 & 0.54 & 0.88  \\ \hline
    \end{tabular}
\caption{Least squares errors, $\delta$, corresponding to the reconstructions in figure \ref{Fr1}.}
\label{Tr1}
\end{table}

In all reconstructions in figure \ref{Fr1}, the original phantom image is reflected in the $x_1$ axis. This is to be expected, as we can only recover even functions in $x_2$ using $\mathcal{E}_jf$ data, by Corollary \ref{corr_null}. In \cite{p13}, the authors prove that, in reconstructions from $\mathcal{E}_jf$ data, there are artifacts which are reflections in the $x_1$ axis. More specifically, they showed that any singularity in the wavefront set of $f$, which is detected by $\mathcal{E}_jf$, would be reflected in the $x_1$ axis and appear as an additional (unwanted) singularity in the reconstruction. Thus, the image reconstructions of figure \ref{Fr1} are also in line with the theory of \cite{p13}.
\begin{figure}[!h]
\centering
\begin{subfigure}{0.24\textwidth}
\includegraphics[width=0.9\linewidth, height=3.2cm, keepaspectratio]{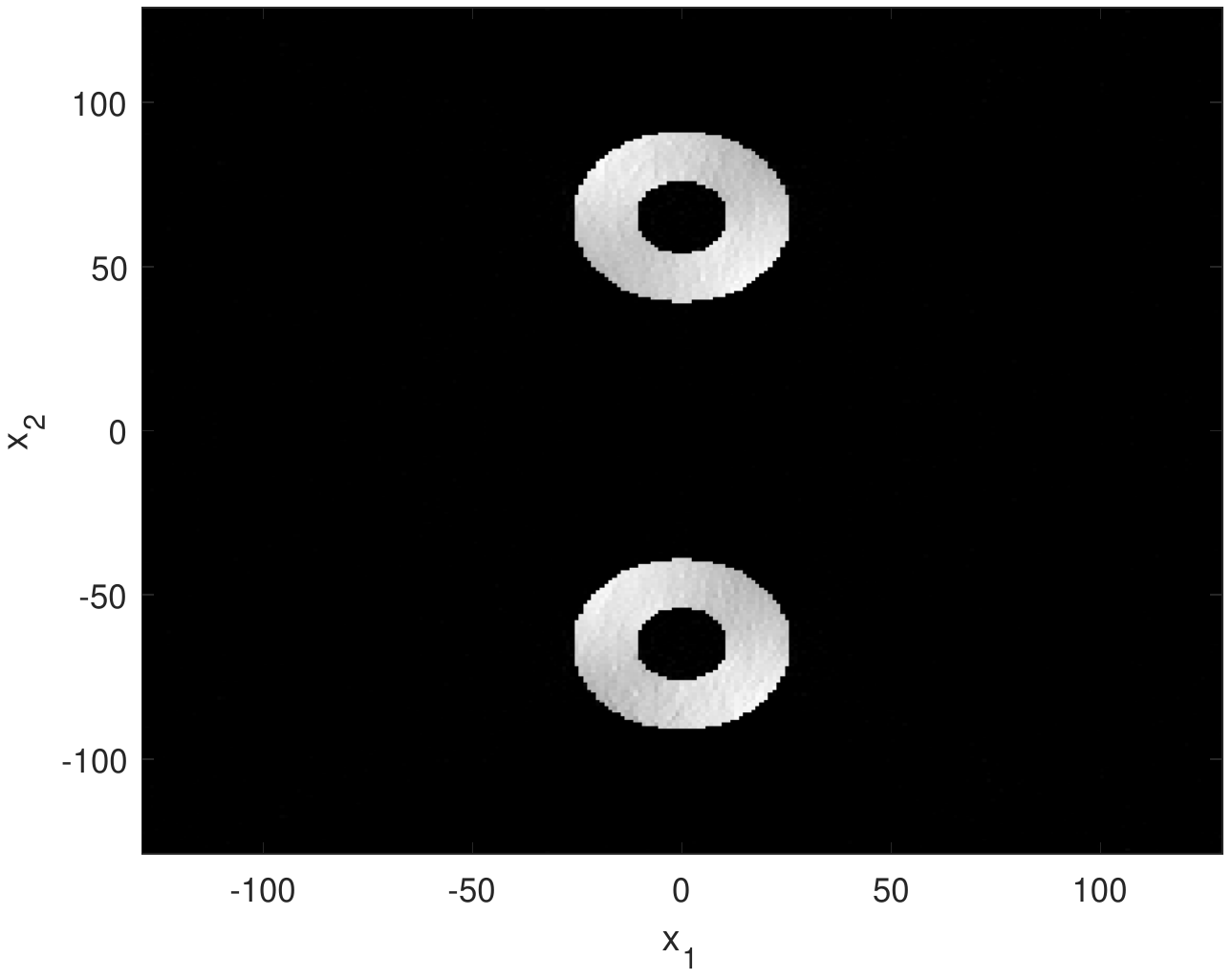}
\end{subfigure}
\begin{subfigure}{0.24\textwidth}
\includegraphics[width=0.9\linewidth, height=3.2cm, keepaspectratio]{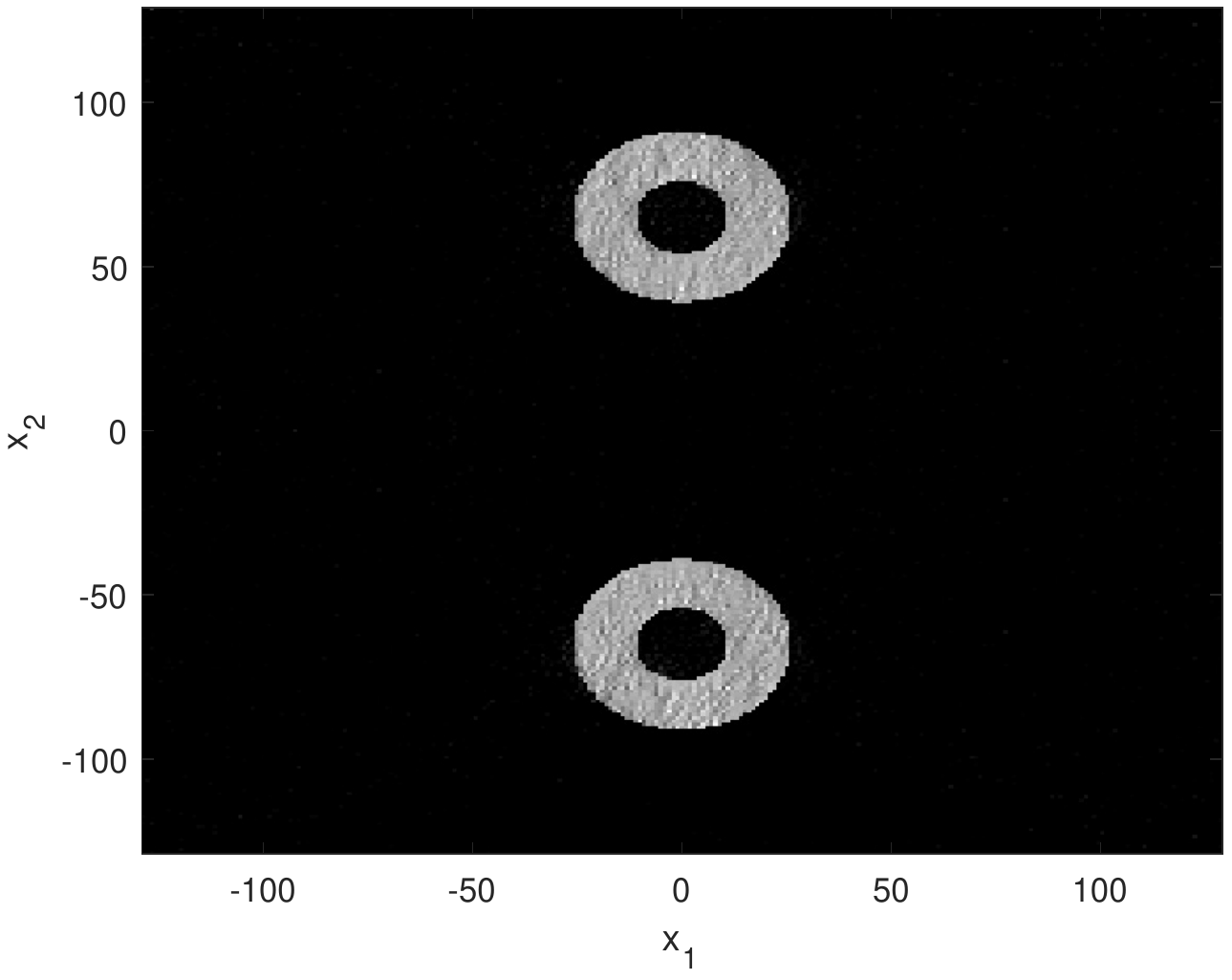}
\end{subfigure}
\begin{subfigure}{0.24\textwidth}
\includegraphics[width=0.9\linewidth, height=3.2cm, keepaspectratio]{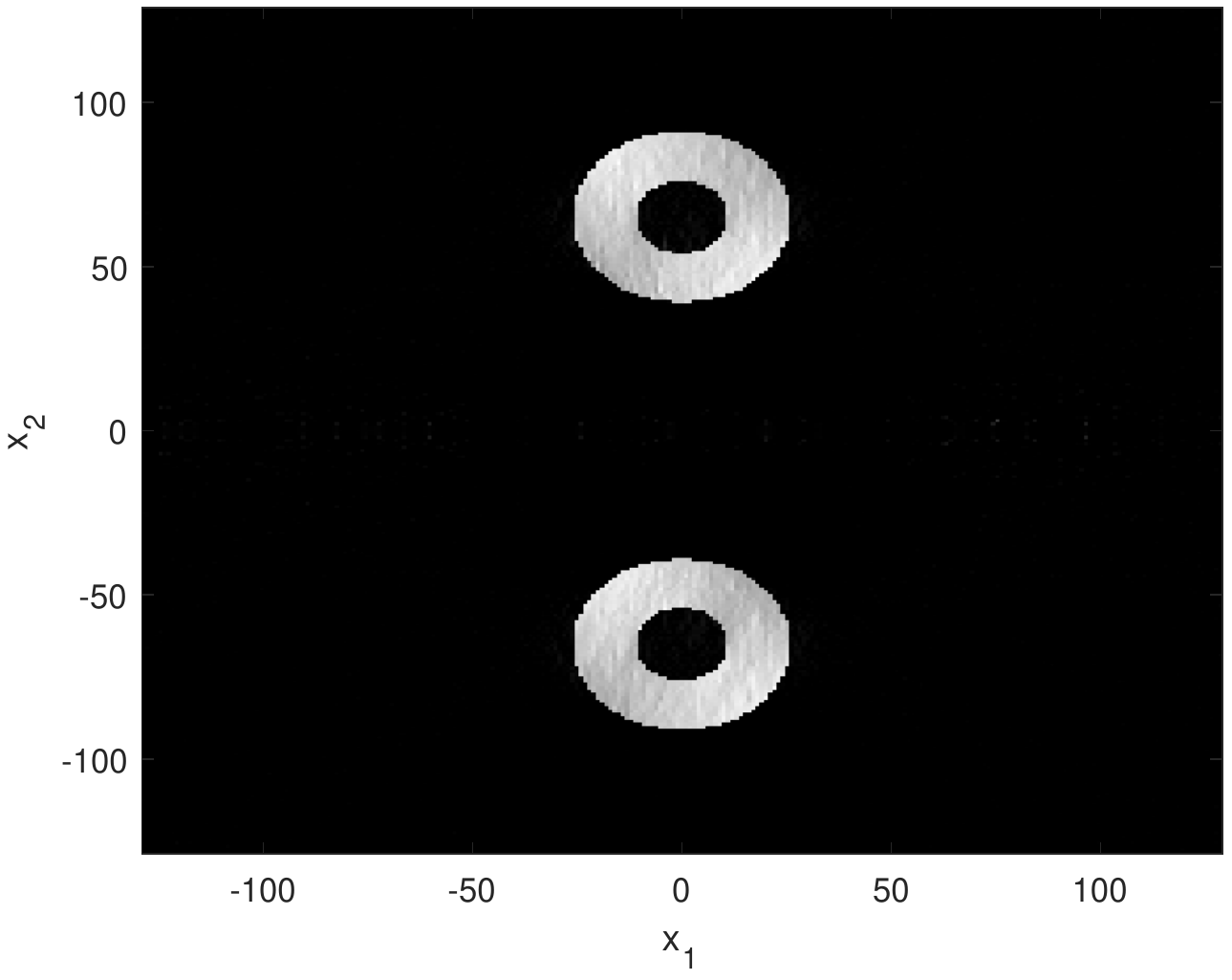}
\end{subfigure}
\begin{subfigure}{0.24\textwidth}
\includegraphics[width=0.9\linewidth, height=3.2cm, keepaspectratio]{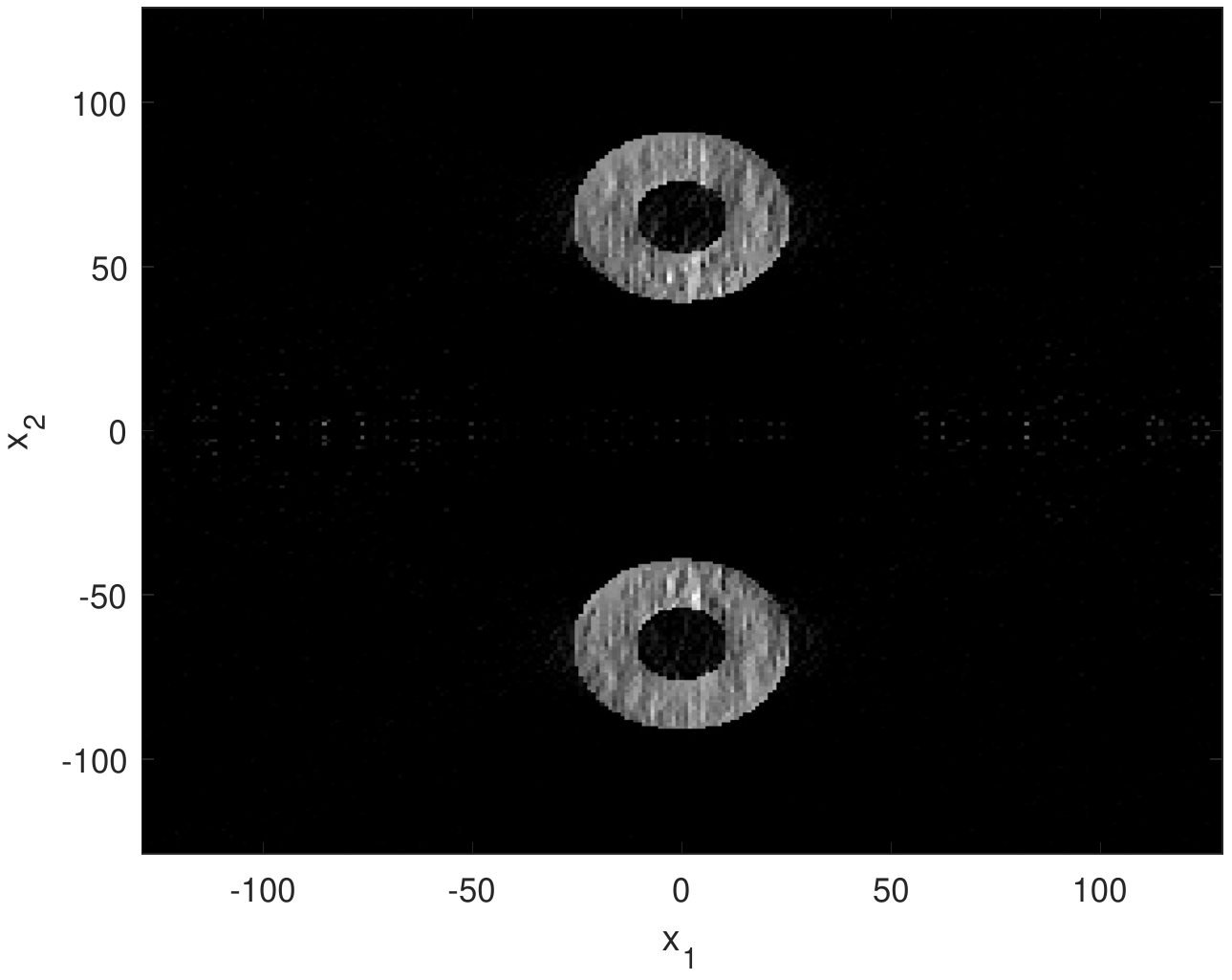}
\end{subfigure}
\begin{subfigure}{0.24\textwidth}
\includegraphics[width=0.9\linewidth, height=3.2cm, keepaspectratio]{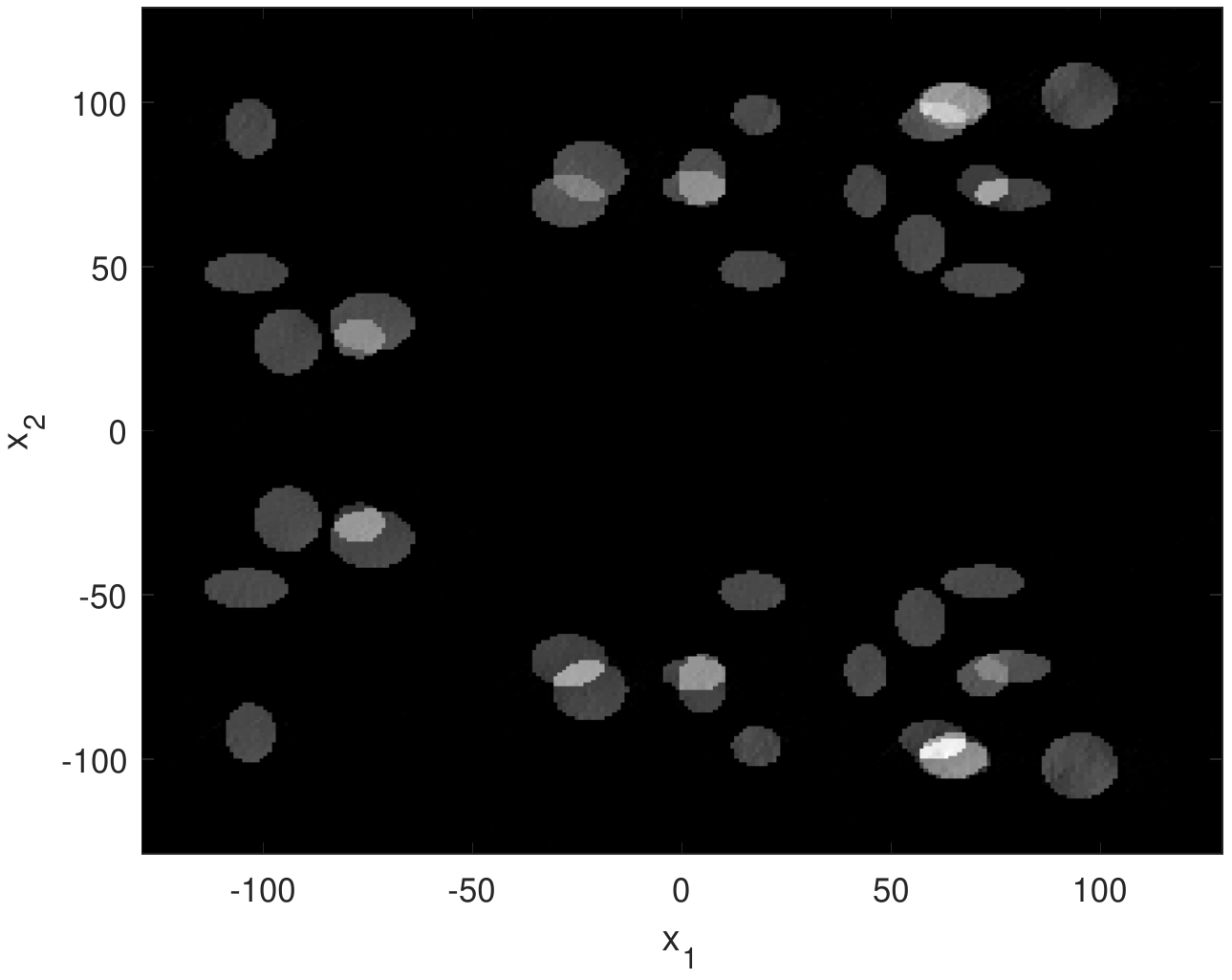}
\subcaption*{$\mathcal{E}_0f$ recon, $1\%$ noise}
\end{subfigure}
\begin{subfigure}{0.24\textwidth}
\includegraphics[width=0.9\linewidth, height=3.2cm, keepaspectratio]{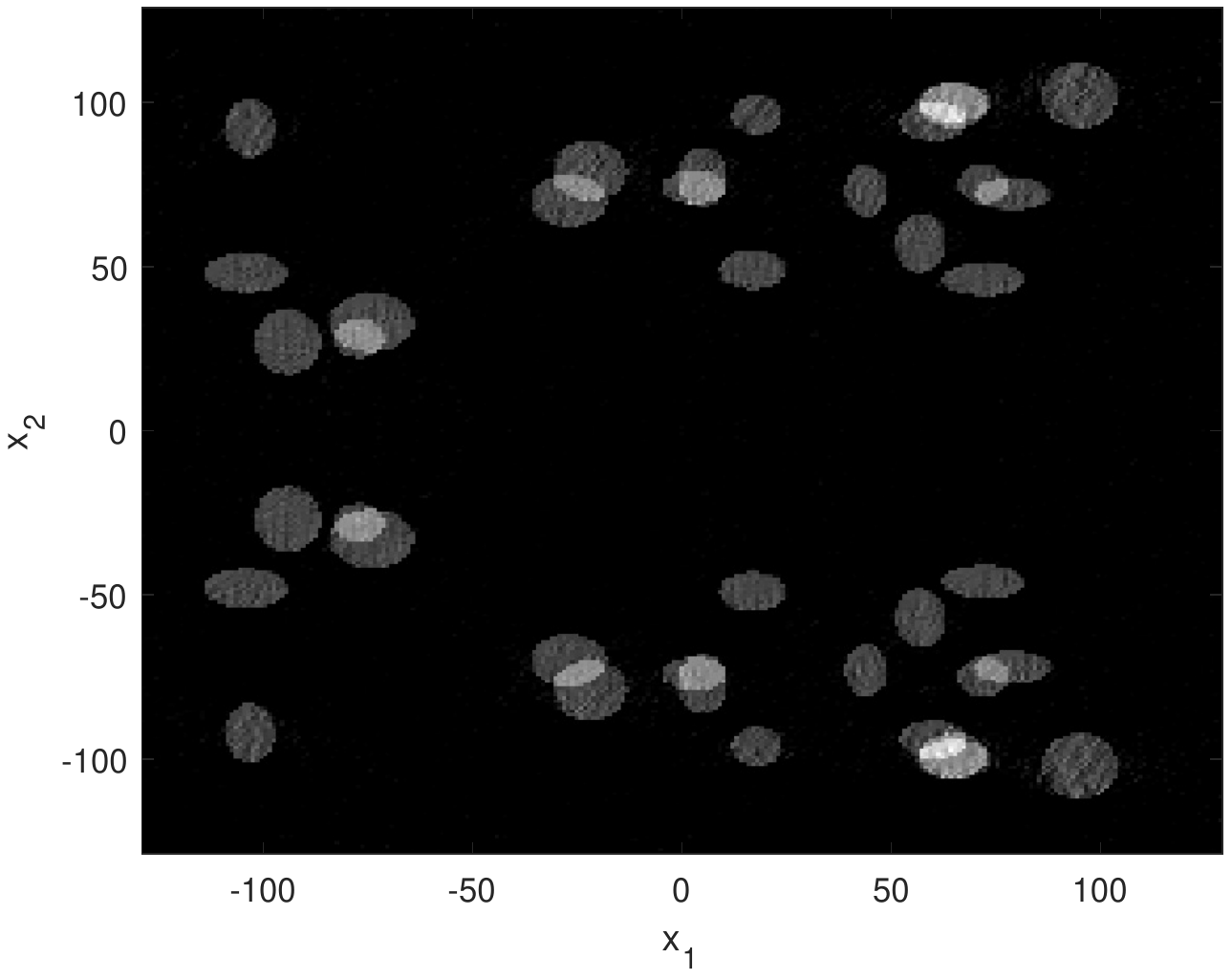}
\subcaption*{$\mathcal{E}_0f$ recon, $5\%$ noise}
\end{subfigure}
\begin{subfigure}{0.24\textwidth}
\includegraphics[width=0.9\linewidth, height=3.2cm, keepaspectratio]{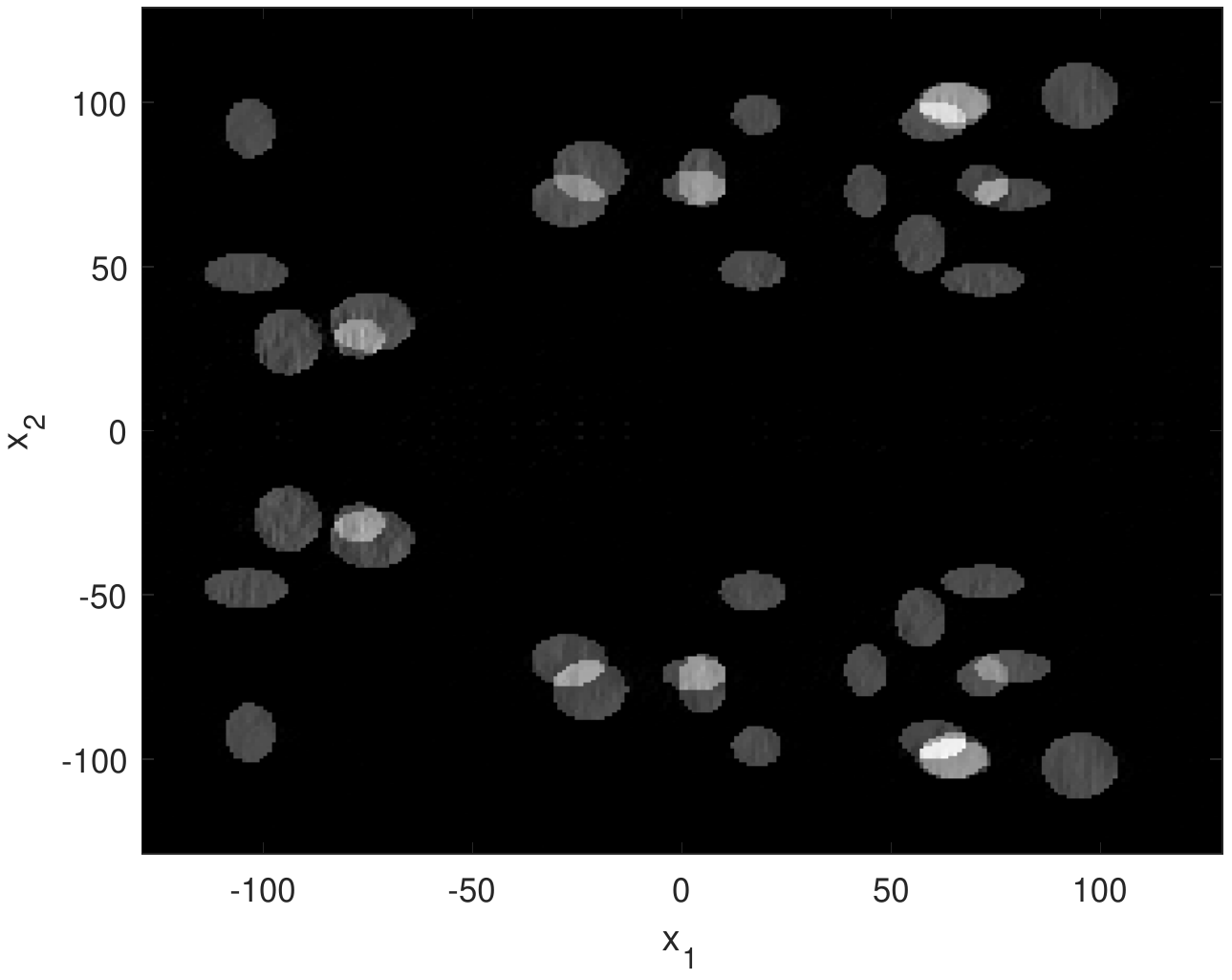}
\subcaption*{$\mathcal{E}_1f$ recon, $1\%$ noise}
\end{subfigure}
\begin{subfigure}{0.24\textwidth}
\includegraphics[width=0.9\linewidth, height=3.2cm, keepaspectratio]{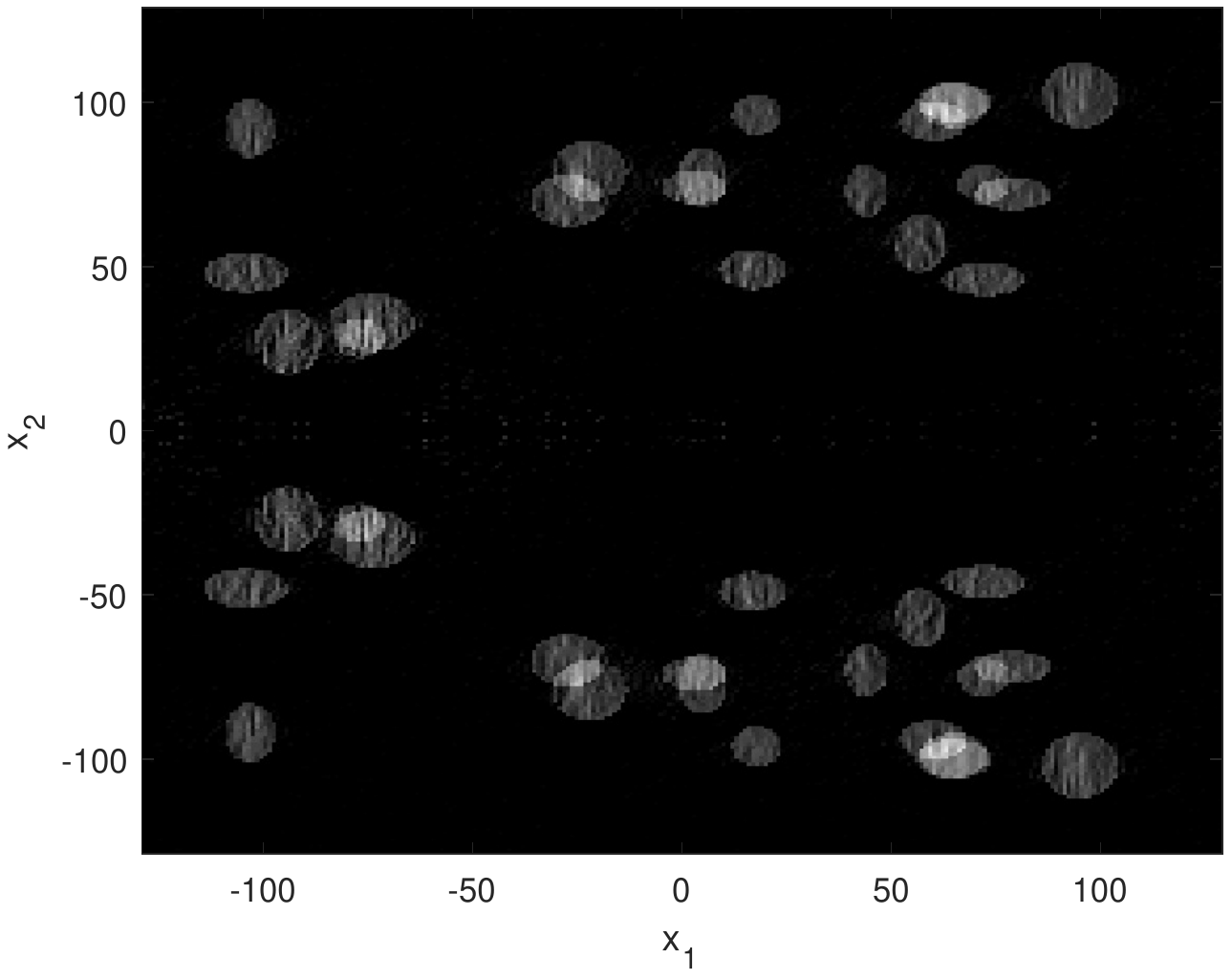}
\subcaption*{$\mathcal{E}_1f$ recon, $5\%$ noise} \label{h13}
\end{subfigure}
\caption{TV reconstructions from ellipse and hyperbola integrals, with varying levels of noise. The annulus phantom reconstructions are on the top row, and the ellipse phantom reconstructions are on the bottom row.}
\label{Fr2}
\end{figure}
\begin{table}[!ht]
    \centering
    \begin{tabular}{|l|l|l|l|l|}
    \hline
        phantom & $\mathcal{E}_0f$, $1\%$ noise & $\mathcal{E}_0f$, $5\%$ noise & $\mathcal{E}_1f$, $1\%$ noise & $\mathcal{E}_0f$, $5\%$ noise \\ \hline
        annulus & 0.17 & 0.34 & 0.21 & 0.54 \\ \hline
        ellipses & 0.09 & 0.23 & 0.09 & 0.36  \\ \hline
    \end{tabular}
\caption{Least squares errors, $\delta$, corresponding to the reconstructions in figure \ref{Fr2}.}
\label{Tr2}
\end{table}

In figure \ref{Fr2}, we present image reconstructions using the TV method outlined in point (2) of subsection \ref{recon_methods}. In table \ref{Tr2}, we give the least squares error values corresponding to the reconstructions in figure \ref{Fr2}. TV successfully suppresses the artifacts due to limited edge detection and is more robust to noise, when compared to CGLS. The reflection artifacts in the $x_1$ axis are still present however. This is because TV enforces sparse gradients. Since the reflected phantoms have sparse gradients, there is no reason for the TV penalty to suppress the reflected artifact. The original shape and size of both phantoms is well preserved at all noise levels considered, and there is less overall background noise, when compared to CGLS. There is a visible noise effect, however, on the non-zero parts of the phantoms, particularly when $5\%$ noise is added and we are reconstructing $f$ from $\mathcal{E}_1f$ data. The noise effect is also more present in the annulus phantom reconstructions, when compared to the ellipse phantom reconstructions. This is also reflected in the least squares error results of table \ref{Tr2}, as the $\delta$ values corresponding to the annulus phantom reconstruction are higher, when compared to the ellipse phantom reconstructions, across all examples considered here.

\section{Conclusion}
In this paper, we presented novel injectivity results and inversion methods for a class of generalized Abel operators. In Theorem \ref{anti_volt_thm}, we provided sufficient conditions on the Abel operator kernel for existence and uniqueness of solution in $L^2([a,b])$. In Theorem \ref{trans_inv_thm}, we applied our theory to ellipsoid and hyperboloid Radon transforms, $\mathcal{R}_j$, which have applications to URT when $j=0$. We proved injectivity of $\mathcal{R}_j$, and provided an inversion method based on Neumann series. Specifically, we showed, after Fourier transformation, that $\mathcal{R}_j$ could be reduced to a set of generalized Abel operators. The operator kernels were then shown to satisfy the conditions of Theorem \ref{anti_volt_thm} to prove injectivity of $\mathcal{R}_j$. We generalized the results of Theorem \ref{trans_inv_thm} in section \ref{gen_section}, and showed that $f$ could be recovered uniquely from its integrals over the surfaces of revolution of a more general set of continuous curves. In section \ref{results}, we presented simulated reconstructions of image phantoms in two-dimensions from ellipse and hyperbola integral data. We compared Tikhonov and TV regularization methods. Strong artifacts were observed in the Tikhonov reconstructions due to data limitations, and Tikhonov regularization was not sufficient to suppress the noise. TV faired much better, and successfully suppressed the more significant artifacts observed in the Tikhonov reconstructions.

In discussion \ref{dis_1}, we outlined the smoothing properties of $\mathcal{R}_j$, and discussed the inversion instabilities. In further work, we aim to formalize these ideas and derive stability estimates on Sobolev scale.

\section*{Acknowledgements:} The authors wish to acknowledge
funding support from The V
Foundation, Brigham Ovarian Cancer Research Fund, Abcam Inc., and Aspira Women's Health.

\appendix

\section{Additional applications of generalized Abel equations}
\label{appA}
In this section, we discuss additional applications of generalized Abel equations to Radon transforms which are rotation invariant.
\subsection{A rotation invariant spherical Radon transform}
In \cite{p10}, the authors consider the spherical means transform
\begin{equation}
\label{sph_means}
\hat{f}(\vy) = \frac{1}{\omega_n}\int_{\vxi\in S^{n-1}} f\paren{p(\vom +\vxi)}\mathrm{d}\Omega,
\end{equation}
where $\omega_n$ denotes the volume of $S^{n-1}$, $\mathrm{d}\Omega$ is the standard measure on $S^{n-1}$, and $\vy = 2p\vom$. $\hat{f}(\vy)$ is the mean value of $f$ over the sphere with center $\vy/2$ and radius $|\vy|/2$. We use the notation of \cite{p10} in this appendix. In \cite{p10}, the authors present inversion formulae for $\hat{f}$, and investigate its applications to the Darboux equation.

After spherical harmonic decomposition of $\hat{f}$, in \cite[Theorem 1]{p10}, the authors reduce $\hat{f}$ to a set of Volterra equations
\begin{equation}
\label{fl}
\hat{f}_l(s) = \paren{\frac{2}{s}}^{n-1}\frac{\omega_{n-1}}{\omega_n C_l^{\gamma}(1)} \int_{0}^s C_l^{\gamma}\paren{\frac{r}{s}} f_l(r)r^{2\gamma}\paren{1-\paren{\frac{r}{s}}^2}^{\gamma-\frac{1}{2}} \mathrm{d}r,
\end{equation}
where $\gamma = \frac{n-2}{2}$, and $C_l^{\gamma}$ is a Gegenbauer polynomial of  degree $l$. The equations \eqref{fl} are to be solved for $f_l$, for every $l\in \mathbb{Z}$.

Equation \eqref{fl} can be converted to a generalized Abel equation by multiplying both sides by $s^{2\gamma - 1}$
\begin{equation}
\begin{split}
\label{fl_1}
\frac{s^{n-1}}{\lambda_n}\hat{f}_l(s) &= \int_{0}^s \paren{s-r}^{\alpha} \left[r^{\alpha + \frac{1}{2}} \paren{s + r}^{\alpha} C_l^{\gamma}\paren{\frac{r}{s}}\right]f_l(r)\mathrm{d}r \\
&= \int_{0}^s \paren{s-r}^{\alpha} K_l(s,r) f_l(r)\mathrm{d}r
\end{split}
\end{equation}
where $\lambda_n = \frac{2 \omega_{n-1}}{\omega_n C_l^{\gamma}(1)} \neq 0$, and $\alpha = (n-3)/2$.  In \cite[Theorem 1]{p10}, the authors solve \eqref{fl} using a generalization of Cormack's formula \cite{p11}, under the assumption that the $f_l$ are smooth. We provide an $L^2$ solution to \eqref{fl} in the following theorem.

\begin{theorem}
\label{sph_inv_thm}
Let $f_l \in L^2([a,b])$, with $0<a<b<\infty$. Then \eqref{fl} can solved uniquely for $f_l$ if $\hat{f}_l$ is known for $s\in [a,b]$. 
\end{theorem}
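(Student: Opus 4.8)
The plan is to recognize \eqref{fl_1} as a generalized Abel equation of the type \eqref{anti_volt} with $j=0$ and to verify that its kernel meets the hypotheses of Corollary \ref{corr1}. Multiplying \eqref{fl} through by $s^{2\gamma-1}$ has already recast it, in \eqref{fl_1}, into the form
\begin{equation*}
\frac{s^{n-1}}{\lambda_n}\hat{f}_l(s) = \int_0^s (s-r)^{\alpha} K_l(s,r) f_l(r)\,\dd r, \qquad K_l(s,r) = r^{\alpha+\frac12}(s+r)^{\alpha} C_l^{\gamma}\paren{\tfrac{r}{s}},
\end{equation*}
with $\alpha = (n-3)/2 > -1$. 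Since $f_l$ is supported in $[a,b]$, for $s\in[a,b]$ the integral runs effectively over $r\in[a,s]$, so the relevant domain is $T_0 = \{(s,r): a\le s\le b,\ a\le r\le s\}$, exactly the region appearing in Corollary \ref{corr1}.

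Next I would fix the integers in the generalized Abel framework. Writing $\alpha = m-\beta$ with $m\ge 0$ an integer and $\beta\in[0,1)$, one takes $\beta=\tfrac12$, $m=(n-2)/2$ when $n$ is even, and $\beta=0$, $m=(n-3)/2$ when $n$ is odd (both require $n\ge 2$, so that $\alpha>-1$). The diagonal condition is then immediate: $K_l(p,p) = 2^{\alpha}\,C_l^{\gamma}(1)\,p^{2\alpha+\frac12}$, which is nonzero for $p\in[a,b]$ because $a>0$ and $C_l^{\gamma}(1)\neq 0$ (the latter is already built into the standing hypothesis $\lambda_n\neq 0$).

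The heart of the argument is the regularity of $K_l$, and this is where $a>0$ does all the work. On $T_0$ the quantities $r$, $s$, and $s+r$ are bounded below by $a$, $a$, and $2a$ respectively, and $s>0$ makes $r/s$ a smooth function of $(s,r)$. Hence each factor $r^{\alpha+\frac12}$, $(s+r)^{\alpha}$, and the polynomial $C_l^{\gamma}(r/s)$ is $C^{\infty}$ on $T_0$, so $K_l\in C^{\infty}(T_0)$ and in particular $K_l^{(i)}=\frac{\dd^i K_l}{\dd s^i}\in C(T_0)$ for every $0\le i\le m+1$. Both hypotheses of Corollary \ref{corr1} are therefore satisfied, so \eqref{fl_1} has a unique solution $f_l\in L^2([a,b])$. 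Since on $[a,b]$ the factor $s^{2\gamma-1}=s^{2\alpha}$ is bounded away from zero, \eqref{fl} and \eqref{fl_1} have identical solution sets, and the same $f_l$ is the unique solution of \eqref{fl}, completing the proof.

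I would expect no serious obstacle. The only potential singularities of $K_l$ come from the powers of $r$ and $s+r$ and from the argument $r/s$ of the Gegenbauer polynomial, and all of these are removed by working on $[a,b]$ with $a>0$; the genuinely singular behaviour is confined to the $(s-r)^{\alpha}$ factor on the diagonal, which is precisely what the machinery of Theorem \ref{anti_volt_thm} and Corollary \ref{corr1} is built to absorb. The one point worth stating carefully is that the derivatives demanded by Corollary \ref{corr1} are taken only in the first variable $s$, so joint smoothness of $K_l$ certainly supplies their continuity.
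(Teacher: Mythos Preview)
Your proposal is correct and follows essentially the same route as the paper: verify that $K_l\in C^{\infty}(T_0)$ (since $r$, $s+r$, and $r/s$ are smooth on $T_0$ when $a>0$) and that $K_l(s,s)=2^{\alpha}C_l^{\gamma}(1)s^{2\alpha+\frac12}\neq 0$, then invoke Corollary~\ref{corr1}. Your additional remarks on the explicit choice of $m,\beta$ and on the equivalence of \eqref{fl} and \eqref{fl_1} are fine elaborations but not departures from the paper's argument.
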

\begin{proof}
Let $T_0$ be defined as in Theorem \ref{weak_volt_thm}. By \eqref{fl_1}, it is sufficient to show that $K_l$ satisfies the conditions of Corollary \ref{corr1} are satisfied. We have 
$$K_l(s,s) = 2^{\alpha}s^{2\alpha + \frac{1}{2}} C_l^{\gamma}(1) \neq 0,$$
for $s \in [a,b]$. Let 
$$g_1(s,r) = r^{\alpha + \frac{1}{2}} \paren{s + r}^{\alpha},$$
and 
$$g_2(s,r) = C_l^{\gamma}\paren{\frac{r}{s}}.$$ 
Then, $g_1 \in C^{\infty}(T_0)$. Further, since $g_2$ is the composition of a polynomial and $r/s$, which is smooth on $T_0$, $g_2 \in C^{\infty}(T_0)$. Thus, $K_l \in C^{\infty}(T_0)$, and the conditions of Corollary \ref{corr1} are satisfied. This finishes the proof.
\end{proof}

\begin{remark}
In \cite[Theorem 1]{p10}, an inversion formula is provided for \eqref{fl} for smooth functions, which cannot have singularities (edges). In many imaging applications, the $f$ of interest have sharp edges, and thus a solution in $L^2$ is desired. Theorem \ref{sph_inv_thm} provides such a solution. It is noted that equation \ref{fl_1}, given the smoothness of $K_l$, could also be solved for $L^2$ functions using the arguments of \cite[pages 515 and 516]{p4}.  This analysis and Theorem \ref{sph_inv_thm} are included as it gives another concrete example, in addition to Theorem \ref{trans_inv_thm} and the generalizations of section \ref{gen_section}, of the applications of generalized Abel equations and Theorem \ref{anti_volt_thm} to Radon transforms.
\end{remark}


\begin{thebibliography}{10}
\bibitem{p1}
Coker, Jonathan D., and Ahmed H. Tewfik. ``Multistatic SAR image reconstruction based on an elliptical-geometry Radon transform." In 2007 International Waveform Diversity and Design Conference, pp. 204-208. IEEE, 2007.
\bibitem{p2}
Ambartsoumian, Gaik, Jan Boman, Venkateswaran P. Krishnan, and Eric Todd Quinto. ``Microlocal analysis of an ultrasound transform with circular source and receiver trajectories." American Mathematical Society, series= Contemporary Mathematics 598 (2013): 45-58.
\bibitem{p3}
Cormack, Allan Macleod. "Representation of a function by its line integrals, with some radiological applications." Journal of applied physics 34, no. 9 (1963): 2722-2727.
\bibitem{p4}
Quinto, Eric Todd. "The invertibility of rotation invariant Radon transforms." Journal of Mathematical Analysis and Applications 91, no. 2 (1983): 510-522.
\bibitem{p6}
Agranovsky, Mark L., and Eric Todd Quinto. "Injectivity sets for the Radon transform over circles and complete systems of radial
 functions." Journal of Functional Analysis 139, no. 2 (1996): 383-414.
\bibitem{p7}
Davis, Harold Thayer. A survey of methods for the inversion of integrals of Volterra type. Vol. 14, no. 76. Indiana University, 1927.
\bibitem{p8}
Hirkawa, Nakagoro. "On a Simple Integral Equation." Tohoku Mathematical Journal, First Series 8 (1915): 38-41.
\bibitem{p9}
Schiefeneder, Daniela, and Markus Haltmeier. "The Radon transform over cones with vertices on the sphere and orthogonal axes." SIAM Journal on Applied Mathematics 77, no. 4 (2017): 1335-1351.
\bibitem{p10}
Cormack, A. M., and Eric Todd Quinto. "A Radon transform on spheres through the origin in $\mathbb{R}^n$ and applications to the Darboux equation." Transactions of the American Mathematical Society 260, no. 2 (1980): 575-581.
\bibitem{p11}
Cormack, Allan Macleod. "Representation of a function by its line integrals, with some radiological applications." Journal of applied physics 34, no. 9 (1963): 2722-2727.
\bibitem{p12}
Webber, James. "X-ray Compton scattering tomography." Inverse problems in science and engineering 24, no. 8 (2016): 1323-1346.
\bibitem{p13}
Webber, James W., Sean Holman, and Eric Todd Quinto. "Ellipsoidal and hyperbolic Radon transforms; microlocal properties and injectivity." arXiv preprint arXiv:2212.00243 (2022).
\bibitem{p15}
Olver, Peter J. Applications of Lie groups to differential equations. Vol. 107. Springer Science \& Business Media, 1993.
\bibitem{p16}
Roman, Steven. "The formula of Faa di Bruno." The American Mathematical Monthly 87, no. 10 (1980): 805-809.
\bibitem{p17}
Tricomi, Francesco Giacomo. Integral equations. Vol. 5. Courier corporation, 1985.
\bibitem{p18}
Ambartsoumian, Gaik, Rim Gouia-Zarrad, and Matthew A. Lewis. "Inversion of the circular Radon transform on an annulus." Inverse Problems 26, no. 10 (2010): 105015.
\bibitem{p19}
Webber, James, and Eric L. Miller. "Compton scattering tomography in translational geometries." Inverse Problems 36, no. 2 (2020): 025007.
\bibitem{p20}
Cebeiro, Javier, Cécilia Tarpau, Marcela A. Morvidone, Diana Rubio, and Maï K. Nguyen. "On a three-dimensional Compton scattering tomography system with fixed source." Inverse Problems 37, no. 5 (2021): 054001.
\bibitem{p21}
Ambartsoumian, Gaik, and Venkateswaran P. Krishnan. "Inversion of a class of circular and elliptical Radon transforms." Complex Analysis and Dynamical Systems VI. Part 1 (2015): 1-12.
\bibitem{p22}
Denecker, Koen, Jeroen Van Overloop, and Frank Sommen. "The general quadratic Radon transform." Inverse problems 14, no. 3 (1998): 615.
\bibitem{p23}
Grathwohl, Christine. "Seismic imaging with the elliptic Radon transform in 3D: analytical and numerical aspects." PhD diss., Karlsruher Institut für Technologie (KIT), 2020.
\bibitem{p24}
Moon, Sunghwan. "On the determination of a function from an elliptical Radon transform." Journal of Mathematical Analysis and Applications 416, no. 2 (2014): 724-734.
\bibitem{p25}
Grathwohl, Christine, Peer Christian Kunstmann, Eric Todd Quinto, and Andreas Rieder. "Imaging with the elliptic Radon transform in three dimensions from an analytical and numerical perspective." SIAM Journal on Imaging Sciences 13, no. 4 (2020): 2250-2280.
\bibitem{p26}
Narayanan, E. K. "Spherical means with centers on a hyperplane in even dimensions." Inverse Problems 26, no. 3 (2010): 035014.
\bibitem{p27}
Bukhgeim, Aleksandr L'vovich, and Viktor Borisovich Kardakov. "Solution of an inverse problem for an elastic wave equation by the method of spherical means." Sibirskii Matematicheskii Zhurnal 19, no. 4 (1978): 749-758.
\bibitem{p28}
Quinto, Eric Todd. "Support theorems for the spherical Radon transform on manifolds." International Mathematics Research Notices 2006, no. 9 (2006): 67205-67205.
\bibitem{p29}
Chihara, Hiroyuki. "Inversion of seismic-type Radon transforms on the plane." Integral Transforms and Special Functions 31, no. 12 (2020): 998-1009.
\bibitem{p30}
Della Valle, Cecile, and Camille Pouchol. "Solving Abel integral equations by regularisation in Hilbert scales." arXiv preprint arXiv:2107.12062 (2021).
\bibitem{p31}
Natterer, Frank. The mathematics of computerized tomography. Society for Industrial and Applied Mathematics, 2001.
\bibitem{p32}
P. C. Hansen, Regularization Tools Version 4.0 for Matlab 7.3, Numerical Algorithms, 46 (2007), pp. 189-194.
\bibitem{p33}
Ehrhardt, Matthias J., Kris Thielemans, Luis Pizarro, David Atkinson, Sébastien Ourselin, Brian F. Hutton, and Simon R. Arridge. "Joint reconstruction of PET-MRI by exploiting structural similarity." Inverse Problems 31, no. 1 (2014): 015001.
\bibitem{p34}
Andersson, Lars-Erik. "On the determination of a function from spherical averages." SIAM Journal on Mathematical Analysis 19, no. 1 (1988): 214-232.
\bibitem{p35}
Nguyen, M. K., and T. T. Truong. "Inversion of a new circular-arc Radon transform for Compton scattering tomography." Inverse Problems 26, no. 6 (2010): 065005.
\end{thebibliography}
\end{document}